\newcommand{\Qp}{\mathbb{Q}_p}
\newcommand{\Zp}{\mathbb{Z}_p}
\newcommand{\Fp}{\mathbb{F}_p}
\newcommand{\Q}{\mathbb{Q}}
\newcommand{\R}{\mathbb{R}}
\newcommand{\C}{\mathbb{C}}
\newcommand{\Z}{\mathbb{Z}}
\newcommand{\N}{\mathbb{N}}
\newtheorem{theorem}{Theorem}[section]
\newtheorem{prop}[theorem]{Proposition}
\newtheorem{lemma}[theorem]{Lemma}
\newtheorem{cor}[theorem]{Corollary}
\newtheorem{definition}[theorem]{Definition}
\newtheorem{Fact}{Fact}
\newtheorem{claim}[theorem]{Claim}
\theoremstyle{remark}
\newenvironment{Remark}{\begin{trivlist}\item[\hskip \labelsep {\bfseries Remark.}]}{\end{trivlist}}
\newenvironment{notation}{\begin{trivlist}\item[\hskip \labelsep {\bfseries Notation.}]}{\end{trivlist}}
\newcommand{\Square}[2]{\,\square_{#1}^{#2}\,}
\title{Model theory of the field of $p$-adic numbers expanded by a multiplicative subgroup.}
\author{Nathana\"el Mariaule\footnote{During the preparation of this paper the author was partially supported by ANR-13-BS01-0006 and by the Fonds de la Recherche Scientifique - FNRS}} 
\date{}
\begin{document}
\maketitle
\begin{abstract}
Let $G$ be a multiplicative subgroup of $\Qp$. In this paper, we describe the theory of the pair $(\Qp, G)$ under the condition that $G$ satisfies Mann property and is small as subset of a first-order structure. First, we give an axiomatisation of the first-order theory of this structure. This includes an axiomatisation of the theory of the group $G$ as valued group (with the valuation induced on $G$ by the $p$-adic valuation). If the subgroups $G^{[n]}$ of $G$ have finite index for all $n$, we describe the definable sets in this theory and prove that it is NIP. Finally, we extend some of our results to the subanalytic setting.  
\end{abstract}

\par Let $\mathcal{M}=(M,\cdots)$ be a $\mathcal{L}$-structure and $A$ be a subset of $M$. In various context, people have studied the theory of the pair $(M,A)$ (in the language $\mathcal{L}$ expanded by a unary predicate interpreted by $A$). This problem has been discussed in a pure abstract setting e.g.\cite{C-S} or in particular cases of $M$ and $A$. Well-known examples are the pairs of fields: $M$ is a field and $A$ is an elementary substructure e.g.\cite{Robinson}. An other example that has been studied by many authors is when $K$ is a field and $A$ is a multiplicative subgroup of $K^*$. The first instances of this problem are L. van den Dries \cite{vdDries5} where he works on the theory of the pair $(\R, 2^\Z)$ or B. Zilber \cite{Zilber} where the pair $(\C, \mathbb{U})$ is considered ($\mathbb{U}$ is the group of roots of unity). Both these results have been generalised to the case where $K$ is a real or algebraically closed field in \cite{Gunaydin-vdD} and $A$ is a small multiplicative subgroup.

\par In this paper, we consider the same problem where $K$ is now the field of $p$-adic numbers $\Qp$. A special case of this problem is presented in \cite{Mariaule} ($G$ is $n^{\Z}$ with $n\in \N$). The aim of this paper is first to generalise the results proved there (working with a general subgroup $G$) and second to discuss classical problems that were not considered previously (extension to subanalytic setting and NIPness of the theory). In section \ref{Expansion of Qp by a multiplicative subgroup}, we axiomatise the theory of the pair $(\Qp, G)$. We will see in this section that $G$ can be reduced  to two main parts (definably in a suitable language): a discrete cyclic group and a subgroup $D$ that is dense in an open subgroup $1+p^n\Zp$. On the discrete group, the $p$-adic valuation induces a structure of ordered group. This part is a $p$-adic equivalent to \cite{vdDries5}. If we add a function symbol $\lambda$ interpreted by a function that sends an element of the field to an element of the group with same valuation, we get quantifier elimination. On the dense part $D$, the valuation induces a structure of valued group. The theory of this valued group is part of the theory of the pair $(\Qp,G)$. In section \ref{valued groups}, we will study the model theory of $D$ as valued group. We will axiomatise the theory of this group, prove that its theory admits quantifier elimination in a natural language and that it is NIP.

\par The axiomatisation of these two cases is part of the axiomatisation of the theory $(\Qp,G)$. Two extra properties are required in order to obtain a complete theory. First, we will assume that $G$ has Mann property (this gives us control on the field $\Q(G)$). This property was used in \cite{Zilber}\cite{Gunaydin-vdD} and it is quite natural to use it in the same way in our context. Finally we will assume that $G$ is small in some sense (this is the case if $G$ is finitely generated for instance). In section \ref{definable sets}, we describe the definable sets in models of our theory assuming that $G^{[n]}:=\{g^n:g\in G\}$ has finite index in $G$ for all $n$. We also describe the subsets of $G$ that are definable in $(\Qp,G)$ in the case where the discrete part of $G$ is trivial.

\par In section \ref{Expansion for subanalytic structure}, we consider the same structure in the language of subanalytic sets $\mathcal{L}_{an}^D$ in the sense of \cite{Denef-vdD}. In this language, the dense part of the group might define the rings of integers. So we assume that $G$ is a discrete group (e.g. $G=p^\Z$). In that case, we prove that the theory of $(\Qp, G)$ admits the elimination of quantifiers in the language $\mathcal{L}_{an}^D(\lambda, A)$ where $\lambda$ is as defined before.
\par We apply this result to the study of the $p$-adic Iwasawa logarithm. Let us recall that the logarithm defined by the usual power series $\log_p(1+x)=\sum_{n>0} (-1)^{n+1}x^n$ is convergent in $\Qp$ only for the elements with positive valuation. On the other hand, as $\Qp^*=p^\Z\times \mu_{p-1}\times (1+p\Zp)$ (where $\mu_{p-1}$ is the group of roots of unity of order $p-1$), one can define a morphism of groups $LOG:(\Qp^*,\cdot)\rightarrow (\Qp,+)$ that extends $\log_p(1+x)$. Furthermore, this map is unique up to the choice of the value of $p$. Set $LOG(p)=0$. Then for all $x\in \Qp$; $x=p^n\xi y\in \Qp$ for some unique $n\in \Z$, $\xi\in \mu_{p-1}$ and $y\in 1+p\Zp$. So, because $LOG$ is a morphism of groups,
$$LOG(x)=nLOG(p)+LOG(\xi)+LOG(y)=\log_p(y).$$
 This logarithm map is called the $p$-adic Iwasawa logarithm. The expansion $(\R,exp)$ of $\R$ has been extensively studied by various authors. In the $p$-adic context, the usual exponential is convergent only on $p\Zp$. There is no natural structure of exponential field on $\Qp$. On the other hand the $p$-adic Iwasawa logarithm induces a structure of logarithmic field. This structure is canonical up the the choice of $LOG(p)$ if we require the logarithm to be analytic on $1+p\Zp$.
In \cite{Denef-vdD} and many papers afterwards, the theory of $\Qp$ with restricted analytic structure has been studied. On the other hand, there is no proper expansion of $\Qp$ by a global function that has been studied in the context of model theory. The expansion by the Iwasawa Logarithm is an example of such a structure.
 As an application of our result on ($\Qp, G)$, we obtain that the theory of $(\Qp, LOG)$ admits the elimination of quantifier in the language $\mathcal{L}_{an}^D(\lambda, A, \xi)$. In section \ref{Iwasawa Logarithm}, we prove that in some (countable) reduct, this theory is model-complete using techniques from \cite{Mariaule2}.

\par In the last section \ref{NIP}, we prove that the following theories are NIP: (1) $Th(\Qp, p^{\Z})$ in the language $\mathcal{L}_{an}^D(G,\lambda)$ and (2) $Th(\Qp, G)$ with $G$ a dense subgroup of $1+p^n\Zp$. This is an application of the abstract setting in \cite{C-S} where it is proved that the theory of $Th(\mathcal{M},A)$ is NIP whenever it admits elimination of quantifiers up to bounded formulas, $Th(\mathcal{M})$ is NIP and the theory of the structure induced on $A$ is NIP. In our case, the first and last points are essentially proved in section \ref{definable sets} and the second hypothesis is well-known. As consequence of (1) , $Th(\Qp, LOG)$ is NIP. 

\begin{notation} Let $K$ be a valued, we will denote by $v_K$ or $v$ its valuation, by $\mathcal{O}_K$ its valuation ring, by $vK$ its value group, by $res\ K$ its residue field and by $res$ the residue map. The $p$-adic valuation will be denoted $v_p$. Let $K^h$ denote the henselisation of $K$. Let $A$ be a ring, we denote by $A^*$ the set of nonzero elements and by $A^{\times}$ the set of units. We will denote by $\mathcal{L}_{Mac}$ the language of $p$-adically closed fields $(+,-,\cdot,0,1, P_n (n\in\N))$ where $P_n$ is interpreted in $\Qp$ by the set of $n$th powers. If $G$ is a group, $G_{Tor}$ denotes its torsion part.
\end{notation}

\section{Elementary properties of abelian $p$-valued groups}\label{valued groups}
\par Let $G$ be a subgroup of $(\Zp,+,0)$. Then, $v_p$ induces a map on $G$ such that $(G,v_p)$ is a valued group. In this section, we study the theory of the structure $(G,v_p)$. In a first time, we axiomatise the theory of this group and prove a result of quantifier elimination. At the end of this section, we will also prove that $Th(G,v_p)$ is NIP if the index $[G:nG]$ is finite for all $n$. The special case $G=\Z$ has been discussed in \cite{Guignot-thesis}\cite{Mariaule}.
\begin{definition}
Let $(G,+,0_G)$ be an abelian group and $V:G\twoheadrightarrow \Gamma\cup\{\infty\}$ where $\Gamma$ is a totally ordered set with discrete order and no largest element and $\infty$ is an element such that $\infty>\gamma$ for all $\gamma\in \Gamma$. We say that $(G,V)$ is a \emph{$p$-valued group} if for all $x,y\in G$ and for all $n\in \Z$,
\begin{itemize}
	\item $V(x)=\infty$ iff $x=0_G$;
	\item $V(nx)=V(x)+v_p(n)$;
	\item $V(x+y)\geq \min\{V(x),V(y)\}$;
\end{itemize}
where $v_p$ is the $p$-adic valuation, $nx=x+\cdots +x$ ($n$ times), $(-n)x=-(nx)$ for all $n>0$, $0x=0_G$ and if $x\in G$, $V(x)+k$ denotes the $k$th successor of $V(x)$ in $\Gamma\cup\{\infty\}$ (by convention the successor of $\infty$ is $\infty$). 
\end{definition}
\begin{Remark} The above axioms implies that if $V(x)\not=V(y)$ then $V(x+y)=\min\{V(x), V(y)\}$.
\end{Remark}
Let $G$ be an abelian group. We denote by $[n]G$ the index of $nG$ in $G$. If it is not finite, we set $[n]G=\infty$ with no distinction between cardinalities. Let $\mathcal{L}_{pV}$ be the two-sorted language $\Big((+,-,0, 1,\equiv_n (n\in\N),c_{ij}),(<,S,0_{\Gamma},\infty),V\Big)$ where the first sort corresponds to the group $G$ and the second to the ordered set $V(G)$. $c_{ij}$ is a collection of constant symbols interpreted by representatives for the cosets of $iG$. We fix $(q_n,n\in \N$) an enumeration of the prime numbers. Let $N=(t_1,t_2,\cdots)$ be a sequence in $\N\cup\{\infty\}$ where the index $t_n$ associated to $p$ is nonzero (we will assume $n=1$).  By convention, we set $q_n^\infty:=\infty$. Let $T_{pV,N}$ be the $\mathcal{L}_{pV}$-theory axiomatised as follows: Let $\Big((G,+,-,0,1,\equiv_n (n\in\N), c_{ij} (0\leq j\leq q_i^{t_i})),(VG\cup\{\infty\},<,S,0_{\Gamma},\infty),V\Big)$ be a model of $T_{pV,N}$ then
\begin{itemize}
	\item $(G,+,-,0)$ is an abelian group, $x\equiv_n y$ iff $\exists g\in G, x=y+ng$ and $[q_n]G=q_n^{t_n}$, $c_{ij}\not\equiv_i c_{ik}$ for all $i$ and for all $k\not=j$, $c_{i0}=0$;
	\item $(VG,<)$ is a discrete ordered set with first element $0_{\Gamma}$, any nonzero element has a predecessor and there is no last element, $\infty$ is an element such that $\gamma<\infty$ for all $\gamma\in VG$ and $S$ is the successor function ($S(\infty):=\infty$);
	\item $(G,V)$ is a $p$-valued group;
	\item $1$ is an element such that $V(1)=0_{\Gamma}$ and for all $n$ such that $[n]G\not=1$, $0, 1,\cdots, n-1:=1+\cdots +1$ are in distinct classes of $nG$, $c_{nk}=k$ for all $k\leq n-1$;
	\item For all $x,y\in G$, if $V(x)=V(y)$, then there is a unique $0<i<p$ such that $V(x-iy)>V(x)$;
	\item $G$ is regularly dense i.e. for all $n$, $nG$ is dense in $\{x\in G\mid\ V(x)\geq v_p(n)\}$ (where $v_p(n)$ denotes the $v_p(n)$th successor of $0_{\Gamma}$ in $VG$) i.e. for all $n$
	$$\forall x\in G\ V(x)\geq v_p(n)\rightarrow\Big[\forall\gamma\geq v_p(n)\in VG\exists y\in nG\ V(x-y)\geq \gamma\Big];$$
	\item $G$ is equidistributed i.e. for all $n$ such that $[n]G$ is infinite, for all $\gamma\in VG$, for all $c\in G$, the set $\{x\mid\ V(x-c)\geq \gamma\}$ contains representatives for infinitely many classes of $[n]G$.
\end{itemize}
\begin{Remark}
	1. If $[n]G=k$, then it is written as 
	$$\Psi_{n,k}\equiv \exists x_1,\cdots, x_k\in G \bigwedge_{i\not=j}x_i\not\equiv_n x_j\wedge \forall y\in G, \bigvee_{i=1}^k y\equiv_nx_i.$$
	If $[n]G=\infty$, then it is written as the scheme of formulas $\Phi_{n,k}$ (for all $k\in \N$) where
	$$\Phi_{n,k}\equiv \exists x_1,\cdots, x_k\in G \bigwedge_{i\not=j}x_i\not\equiv_n x_j.$$
	Similarly, equidistribution is described by the collection of the following axioms: for all $n$ such that $[n]G=\infty$ and for all $m$
	$$\varphi_{n,m}\equiv \forall \gamma\in VG\ \forall c\in G\ \exists x_1,\cdots, x_m\ \bigwedge_i V(x_i-c)\geq \gamma\bigwedge_{i\not= j} x_i\not\equiv_n x_j.$$
	\par 2. $t_n=1$ for all $n$, then $T_{pV,N}$ is the theory of $p$-valued $\Z$-group that was studied in \cite{Mariaule} and \cite{Guignot-thesis}.
	\par 3. The last axiom is true for subgroups of $\Zp$ by the pigeon-hole principle and by compactness of $\Zp$. Note that if $G$ is a subgroup of $\Zp$ and $t_n$ is the value such that $[G:q_nG]=q_n^{t_n}$ then $(G,v_p)\vDash T_{pV,N}$.
	\par 4. If $M'$ is a $p$-valued group, then it is torsion-free. In particular, any model of $T_{pV,N}$ is torsion-free.
\end{Remark}
\begin{theorem}\label{EQ valued groups}
The theory $T_{pV,N}$ is consistent and admits the elimination of quantifiers.
\end{theorem}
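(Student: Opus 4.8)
The plan is to obtain consistency from an explicit model and quantifier elimination from the usual embedding test. For consistency it suffices, by the remark preceding the statement, to exhibit a subgroup $G$ of $\Zp$ whose index data $([G:q_nG])_n$ equals $(q_n^{t_n})_n$: with the restriction of $v_p$, and with $1$ interpreted by an element of valuation $0$, such a $(G,v_p)$ is a model of $T_{pV,N}$, and a $G$ realizing the prescribed $N$ can be written down by hand (for instance as a sufficiently generic subgroup of $\Zp^{t_1}$, or abstractly as a direct sum of localizations $\Z_{(q)}$ carrying the evident $p$-valuation).

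For quantifier elimination I would use the criterion that it is enough to show: whenever $M_1,M_2\models T_{pV,N}$ with $M_2$ $|M_1|^+$-saturated, $A$ an $\mathcal{L}_{pV}$-substructure of $M_1$ and $f\colon A\to M_2$ an $\mathcal{L}_{pV}$-embedding, then $f$ extends to an embedding of $M_1$; by Zorn this reduces to adjoining a single element. A substructure $A=(A_G,A_\Gamma)$ is a subgroup $A_G$ of the group sort containing $1$ and the $c_{ij}$ together with an $S$-closed subset $A_\Gamma$ of the other sort containing $0_\Gamma$, $\infty$ and $V(A_G)$. Adjoining a new $\Gamma$-element is easy: the $\Gamma$-sort reduct $(<,S,0_\Gamma,\infty)$ of any model is a discrete ordered set with least element, successor, predecessors for all nonzero elements, no greatest element and a maximum $\infty$, and this class eliminates quantifiers in $(<,S,0_\Gamma,\infty)$ by a routine computation; hence any finitely many new $\Gamma$-elements that might be needed — the value of the group element to be adjoined, or predecessors of values already present — are realized in $M_{2,\Gamma}$ by saturation. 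So the real work is to extend $f$ by one element $c\in M_{1,G}\setminus A_G$, i.e.\ to find $c'\in M_{2,G}$ realizing the $f$-image of the quantifier-free type of $c$ over $A$, which is governed by the linear relations $mc=a$ ($m\in\Z$, $a\in A_G$), all conditions $mc\equiv_n b$ (the position of $c$ in the cosets modulo $nM_{1,G}$ relative to $A_G$ and the $c_{ij}$), and the values $V(mc+a)$ with their positions in $A_\Gamma$.

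If $mc\in A_G$ for some $m>0$ — take $m$ minimal and $a:=mc$ — the extension is forced: torsion-freeness of $G$ makes $c$ the unique $m$-th root of $a$, and the divisibility of $a$ by $m=\prod_i q_i^{e_i}$ is recorded by the quantifier-free conjunction $\bigwedge_i a\equiv_{q_i^{e_i}}0$, which $f$ preserves, so $f(a)$ has a unique $m$-th root $c'$ in $M_{2,G}$; since $V(c)=V(a)-v_p(m)$ and every coset $kc\equiv_n b$ is determined by quantifier-free data about $a$, $m$ and the $c_{ij}$, one checks $c'$ has the right type. The main case is $\Z c\cap A_G=0$, where the argument takes on a valued-group character. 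One distinguishes whether the set of values $\{V(b-c):b\in A_G\}$ has a greatest element — $c$ ``close'' to $A_G$ — or whether one can choose $b$ with $V(b-c)$ increasing cofinally, so that $c$ is a pseudo-limit of a pseudo-Cauchy sequence from $A_G$. In the first case, after translating $c$ by a suitable $b_0\in A_G$, it remains to place the value $V(c')$ correctly relative to $f(V(A_G))$ — an existing value, or a new value above the existing ones or between two of them, handled by saturation of $M_{2,\Gamma}$ — and then, among the group elements of $M_2$ with matching value-data, to select one satisfying all prescribed conditions modulo $nG$. In the second case, saturation yields a pseudo-limit $c'$ of $(f(b))$ in $M_2$, which one further adjusts to meet the coset conditions. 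In both subcases the crux is that the quantifier-free type of $c$ over $A$ is \emph{finitely satisfiable} in $M_2$, and this is precisely what the axioms ``$[q_n]G=q_n^{t_n}$'', regular density and equidistribution deliver: regular density makes the conditions on values and cosets finitely approximable, and equidistribution (in the infinite-index cases, regular density alone sufficing otherwise) guarantees that the finitely many prescribed coset conditions at each level $n$ can be met jointly inside any prescribed value-neighbourhood.

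The main obstacle is exactly this last case — verifying that regular density and equidistribution really do suffice to realize the quantifier-free type of a $\Z$-independent $c$, simultaneously controlling the position of the new value, the immediate-extension/pseudo-convergence behaviour, and all the conditions modulo $nG$ — together with the preliminary point that honest divisibility statements ``$\exists x\ (nx=a)$'' are \emph{not} quantifier-free in general, so everything must be routed through the $\equiv_n$ and the constants $c_{ij}$, and with the check that the two-sorted language records no invariant of $c$ over $A$ beyond what its quantifier-free type already sees. The $\Z$-group case $N=(1,1,\dots)$ of all this is carried out in \cite{Mariaule} and \cite{Guignot-thesis}; the passage to general $N$ is what makes the coset constants $c_{ij}$ and the equidistribution scheme necessary in the language and in the argument.
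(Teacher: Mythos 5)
Your overall architecture is the same as the paper's (consistency via an explicit subgroup of $\Zp$ with the prescribed indices, quantifier elimination via one-element extensions of substructure embeddings into a saturated model, with the value sort handled by QE for the discrete order and the forced algebraic case $mc=a$ treated through the relations $\equiv_n$), and the reductions you do carry out are correct. But there is a genuine gap precisely at the point you yourself label ``the main obstacle'': the finite satisfiability of the quantifier-free type of a $\Z$-independent element is asserted, not proved, and it does not follow from regular density and equidistribution as stated. Equidistribution only says that a ball meets infinitely many classes modulo $nG$; to realise one prescribed congruence $X\equiv_n c$ while simultaneously avoiding finitely many cosets $b_j+m_jG$ \emph{inside a prescribed ball}, one first needs a Chinese-remainder-type reduction to the case where $m_jG$ has infinite index in $nG$ (Lemma \ref{congruences system}), and then B.~H.~Neumann's covering lemma (Fact \ref{Neumann Lemma}) applied to the subgroup $nN'\cap B''$ of the ball and the cosets to be avoided; this combinatorial ingredient, which is the decisive step of Proposition \ref{extension isom}, is absent from your sketch. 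Likewise, mixed systems of valuational conditions in which some $V(x-a_i)$ lies outside the value set of the substructure require the case analysis of Lemma \ref{balls system} (using QE for the value sort and the axiom producing $0<t<p$ with $V(x-ta_1)>V(x-a_1)$), and one must still verify at the end that the extended map is a morphism of \emph{valued} groups, e.g.\ that $V(la-x)\mathbin{\square}V(l'a-x')$ is preserved when both values are new. None of this is routine bookkeeping, and your write-up explicitly defers all of it.

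The consistency half is also thinner than it reads. A ``direct sum of localizations $\Z_{(q)}$ with the evident $p$-valuation'' is not a model off the shelf: the axioms that every two elements of equal value satisfy $V(x-iy)>V(x)$ for a unique $0<i<p$, regular density, and equidistribution are obtained in the paper precisely by realising the group \emph{inside} $\Zp$ (a generic subgroup of $\Zp^{t_1}$ with the coordinatewise valuation fails the residue axiom). The actual work is to embed such an abstract group into $\Zp$ while keeping the prescribed index at every prime simultaneously: the paper does this by choosing $\Q$-algebraically independent generators, adding the tail elements $y_{ij}$ of their $p$-adic expansions to force the index at $p$ (and at the other primes) down to the prescribed value, and using algebraic independence to show the index at $q$ does not collapse. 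Your proposal would need this (or an equivalent) construction; as written it does not produce a model.
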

\par First, let us prove the consistency of the theory. By the above remark it is sufficient to find a subgroup $\Zp$ with the right indices. Fix $q$ a prime number, say $q$ is the $n$th prime number in our list. We assume that $t_n\geq 1$. Let $x_1,\cdots, x_{t_n-1}$ be elements of $\Zp$ algebraically independent over $\Q$. 
Let $G_1=\Z \oplus_i x_i\Z(p,q)\subset \Zp$ with 
$$\Z(p,q)=\{a/b\mid a,b\in \Z \mbox{ and $p$,$q$ does not divide } b\}.$$
Then, for all $q'$ prime not equal to $p,q$, $[G_1:q'G_1]=q'$. Indeed, any element of $G_1$ can be written as $x=n+\sum (a_i/b_i)x_i = n+ q'\sum (a_i/b_iq')x_i$ where $n\in \Z$ $n_i\in \Z(p,q)$ i.e. $x\in \Z+q'\Z(p,q)$. So, $0,\cdots, q'-1$ form a set of representatives of the cosets of $q'G_1$ (by algebraic independance of the $x_i$'s). On the other hand, $[G_1:pG_1]=p^{t_n}$ and $[G_1:qG_1]=q^{t_n}$.  Indeed, the collection $\{n+\sum n_ix_i:\  0\leq n,n_i<q, n,n_i\in \Z\}$ forms a set of representatives of the cosets of $qG_1$. Similarly for the prime number $p$.

\par In order to reduce $[p]G_1$, we define $y_{ij}:= \frac{1}{p^j}\sum_{k\geq j}x_{ik}p^k\in \Zp$ where $x_{i}=\sum_{k\geq 0} x_{ik}p^k$ with $x_{ij}\in \{0,\cdots, p-1\}$. Note that $y_{i0}=x_i$. Let 
\begin{align*}
G_2:&=G_1\bigoplus_{j\in \N_0} y_{ij}\Z(p,q),\\
		&=\{n+\sum_{0\leq i,j<R} n_{ij}y_{ij}\mid\ n\in \Z, n_{ij}\in \Z(p,q)\}.
\end{align*}
Then for all $q'$ prime not equal to $q$, $[G_2:q'G_2]=q'$. Indeed, for $q'$ not equal to $p$, we argue like above. For $q'=p$, we notice that $y_{ij}=x_{ij}+py_{i(j+1)}\in \Z+pG_2$. So, $0,\cdots, p-1$ form a set of representatives of the cosets of $pG_2$. 
\par Finally, $[G_2:qG_2]=q^{t_n}$: Let us show that $\{n_0+\sum n_ix_i:\ 0\leq n_k<q\}\subset G_1$ forms a set of representatives of $qG_2$. For it is sufficient to prove that $n_0+\sum n_ix_i\notin qG_2$ (if nonzero) and that for all $i,j$, $j>0$, $y_{ij}\in G_1+qG_2$. The first part is true as $n_0+\sum_{i\leq t_n-1} n_ix_i\in qG_2$ iff 
$$n_0+\sum_{i\leq t_n-1} n_ix_i= qn'+\sum_{i,j \leq s} qn'_{ij}y_{ij}$$
 for some $n'\in \Z, n'_{ij}\in \Z(p,q)$ and $s\in \N$. The right hand side can be rewritten as $q\widetilde{n}_0+\sum_i q\widetilde{n}_ix_i$ for some $\widetilde{n}_k\in\Z(q)$ as $y_{ij}=(x_i-\sum_{k<j}x_{ik}p^k)/p^j\in x_i\Z(q)+\Z(q)$. As the $x_i$'s are $\Q$-algebraically independent, $n_0+\sum n_ix_i=q\widetilde{n}_0+\sum_i q\widetilde{n}_ix_i$ iff $n_k=q\widetilde{n}_k$ for all $k$ iff $n_k= 0$ for all $k$ (as $0\leq n_k<q$). For the last claim, let us remark that $p^{j}y_{ij}=r+x_i$ for some $r\in \Z$. Let $a,b\in \Z$ such that $aq+bp^j=1$. Then, $bp^jy_{ij}=br+bx_i$. So, $y_{ij}=(aq+bp^j)y_{ij}=br+bx_i+qay_{ij}\in G_1+qG_2$.

\par Write $G_2$ as $\Z\oplus G_2'$. We define similarly ${G'}_{2,q'}$ for all $q'=q_n$ prime number such that $t_n$ is nonzero (we choose the $x_{i,q}$'s all distinct among a transcendence basis of $\Zp$ over $\Q$; that is an uncountable set). Note that for $q'=p$, it is actually sufficient to take ${G'}_{2,p}:=G_{1,p}$. Then, 
$$G:=\Z(q_i: i \in I) \bigoplus_{\{q_i: i\notin I\}} {G'}_{2,q_i}$$
 is a model of our theory where $I=\{i:\ t_i=0\}$ (where the valuation on $G$ is induced by the $p$-adic valuation). The axioms of indices are satisfied by construction: by definition, $qG_{2,q'}'=G_{2,q'}'$ if $q\not=q'$ and as above $\{n_0+\sum_{k} n_{k}x_{k,q_n}\mid\ 0\leq n_0, n_{k}<q_n\}$ forms a set of representatives of the cosets of $q_nG$. The other axioms are true as $G$ is a subgroup of $\Zp$.

\begin{definition}\label{pure subgroup} Let $M'$ be a model of $T_{pV,N}$ and $M$ be a pure subgroup of $M'$. Let $c\in M'\setminus M$. We define
$$M\langle c\rangle = \{x\in M'\mid nx=mc+y\mbox{ where }y\in M, m,n\in \Z\}.$$
\end{definition}
It is not hard to see that $M\langle c\rangle$ is a pure subgroup of $M'$.

Let $\Square{}{}_i$ denote either $<,>$ or $=$.
\begin{lemma}\label{balls system}Let $M, M'$ be models of $T_{pV,N}$ and $M_0$ be a pure common subgroup of $M$ and $M'$. Let $\overline{a}\subset M_0$,$\overline{n},\overline{m}\subset \Z$ and $\overline{\gamma}\subset VM_0\cup \{\infty\}$. Let
$$\alpha(X)\equiv \bigwedge_i
\left\{
\begin{array} {l} 
V(X-a_i)+n_i\Square{i}{} V(X-a_j)+m_j\\
V(X-a_i)+n'_i\Square{i}{} \gamma_i.
\end{array}
\right. 
$$
Then, $M\vDash \exists x \alpha(x)$ iff $M'\vDash \exists x\alpha(x)$.
\end{lemma}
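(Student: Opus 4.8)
The plan is to reduce the satisfiability of $\alpha$ (a one‑variable quantifier‑free condition about valuations of differences from parameters in the common substructure $M_0$) to a combinatorial question about the value sort, and then to check that this question has the same answer in $M$ and in $M'$. First I would dispose of the degenerate solutions with $x=a_i$ for some $i$: these occur precisely when some conjunct is forced to read $V(X-a_i)=\infty$, and substituting $x=a_i$ turns $\alpha$ into a quantifier‑free $\mathcal{L}_{pV}$‑sentence with parameters in $M_0$, whose truth value is computed identically in $M$ and in $M'$. So it suffices to look for a solution $x$ avoiding $a_1,\dots,a_k$; we may also assume the $a_i$ pairwise distinct, so that each $V(a_i-a_j)\in VM_0$ and, for any such $x$, each $\delta_i:=V(x-a_i)$ lies in $\Gamma:=VM$ (resp.\ $VM'$), not at $\infty$.

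The main step characterises which ``valuation profiles'' are realised. For $\overline{\delta}=(\delta_i)_i\in\Gamma^k$ there is $x\in M$ with $V(x-a_i)=\delta_i$ for all $i$ if and only if $\overline{\delta}$ is \emph{compatible with $\overline{a}$}, meaning: (i) for all $i\ne j$, $V(a_i-a_j)\ge\min(\delta_i,\delta_j)$, with equality whenever $\delta_i\ne\delta_j$; and (ii), writing $\mu=\max_i\delta_i$ and fixing any $i_0$ with $\delta_{i_0}=\mu$, the requirement that $V((a_{i_0}+y)-a_i)=\mu$ hold for every $i$ with $\delta_i=\mu$ is satisfiable by some $y$ of value $\mu$ — equivalently, passing to the residue quotient at level $\mu$, which is a copy of $\mathbb{F}_p$ by the ``$\exists!\,0<i<p$'' axiom, that a finite system of inequations over $\mathbb{F}_p$ is solvable. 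The ``only if'' direction is the ultrametric inequality for (i), and for (ii) the observation that $\overline{x-a_{i_0}}$ is a solution. For the ``if'' direction: solve (ii), pick $y$ of value $\mu$ in the resulting residue class (possible by surjectivity of $V$), and set $x=a_{i_0}+y$; the indices with $\delta_i=\mu$ are handled by the choice of $\overline{y}$, while those with $\delta_i<\mu$ are handled automatically, since then (i) gives $V(a_{i_0}-a_i)=\delta_i<\mu=V(y)$, whence $V(x-a_i)=\delta_i$. The point to emphasise is that ``$\overline{\delta}$ compatible with $\overline{a}$'' refers only to quantifier‑free data about $\overline{a}$ (the valuations $V(a_i-a_j)$ and the residue relations among the $a_i$ at level $\mu$), hence is evaluated identically over $M$ and over $M'$.

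Granting this, $M\models\exists x\,\alpha(x)$ if and only if some $\overline{\delta}\in\Gamma^k$ is compatible with $\overline{a}$ and satisfies the order constraints of $\alpha$ (the assertions $\delta_i+n_i\ \square_i\ \delta_j+m_j$ and $\delta_i+n'_i\ \square_i\ \gamma_i$), and similarly for $M'$ with $\Gamma':=VM'$. This last statement mentions only the finitely many points $0_\Gamma$, the $V(a_i-a_j)$, the $\gamma_i$, and their bounded $S$‑iterates, all of which lie in the common value‑sort substructure $VM_0$ (closed under $S$, since $S$ is $v_p(p)$‑translation, i.e.\ multiplication of the argument by $p$); and $\Gamma$ and $\Gamma'$ are both models of the value‑sort part of $T_{pV,N}$ containing $VM_0$. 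A greedy placement in the discrete order then shows this finite system is solvable in $\Gamma$ iff in $\Gamma'$: each $\delta_i$ may be taken to be a bounded $S$‑iterate of a $VM_0$‑point, or the successor of the maximum of its lower bounds (available since there is no last element), or a point lying in an ``infinite gap'' of $VM_0$ — the existence of such a gap being absolute, and its being infinitely populated being guaranteed in any model of $T_{pV,N}$ by discreteness together with the ``every nonzero element has a predecessor'' axiom. This gives the equivalence.

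I expect the main obstacle to be pinning down the notion ``$\overline{\delta}$ is compatible with $\overline{a}$'' and verifying its ``if'' direction cleanly: one must be sure that the only genuinely obstructing residue phenomenon occurs at the top level $\mu$ (for strictly smaller levels an index of higher valuation ``shields'' the cluster), and one must check the edge behaviour when $p=2$, where the residue quotient has a single nonzero class. The remaining ingredients — the ultrametric computations of the ``only if'' direction and the value‑sort transfer of the previous paragraph — are routine once the relevant data has been correctly identified as living in $M_0$ and in the common value‑sort theory.
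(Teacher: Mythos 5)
Your argument is correct, but it follows a genuinely different route from the paper's. The paper starts from an actual solution $x\in M$ and splits into two cases: if every $V(x-a_i)$ already lies in $VM_0$, it produces a solution inside $M_0$ itself (either any $y\in M_0$ of large enough value works, or one uses the axiom giving $0<t<p$ with $V(x-ta_1)>V(x-a_1)$ and takes $y=ta_1$); if some $V(x-a_i)\notin VM_0$, then the remaining values are forced to equal $\min\{V(x-a_i),V(a_i-a_j)\}$, so the whole system collapses to a one-variable condition on $t=V(x-a_i)$ in the value sort, transferred to $VM'$ by quantifier elimination for the value-sort theory and realised using surjectivity of $V$. You instead prove a standalone characterisation of all realisable valuation profiles $(V(x-a_i))_i$ --- ultrametric compatibility with the $V(a_i-a_j)$ plus solvability of a system of inequations in the order-$p$ quotient at the top level --- and then transfer a $k$-variable existential value-sort statement over $VM_0$. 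Your criterion is correct: the worry you raise about $p=2$ evaporates once condition (ii) is read as ``the excluded classes do not exhaust the $p$-element quotient'', a count determined by which $V(a_i-a_j)$ equal or strictly exceed $\mu$ (hence the same data in $M$ and $M'$, and independent of the choice of $i_0$), and the shielding computation at lower levels is exactly as you describe. What your route buys is a reusable, cell-decomposition-style description of realisable profiles; what it costs is the residue analysis, which the paper's dichotomy avoids entirely, and a multi-variable value-sort transfer, so your ``greedy placement'' is the one step that should be tightened: placements forced to sit exactly $m$ below a parameter, say, must be rewritten as $S^m\delta=\gamma$ and decided from the $S$-closed atomic diagram of $VM_0$ --- or, more simply, replace the greedy argument by the quantifier elimination for the value sort that the paper itself invokes, which applies verbatim since $VM_0$ is a common substructure closed under $S$.
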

\begin{proof}

Let $x\in M$ be a solution of $\alpha(X)$. Note that if the formula $V(X-a_i)\geq \infty$ occurs in $\alpha$ then $x=a_i$ so we are done. Furthermore, $V(X-a_i)\leq\infty$ is always true. So, we may assume that $\gamma_i\not=\infty$ for all $i$.
If $x\in M_0$, we are done. Otherwise, we consider different cases:
\par (a) First, if $V(x-a_i)\in VM_0$ for all $i$, then we find that $\alpha(X)$ is realised in $M_0$. For we may assume that $V(x-a_1)$ is maximal among the $V(x-a_i)$'s. If $V(x)>V(a_1)$, $V(x-a_i)=V(a_i)$ for all $i$. So any $y\in M_0$ such that $V(y)>V(a_1)$ is solution of $\alpha(X)$. Otherwise, by the axiom of $p$-valued groups, there is $0<t<p$ such that $V(x-ta_1)>V(x-a_1)$. Take $y=ta_1$. Then for all $i$, $V(y-a_i)=\min\{V(x-y), V(x-a_i)\}=V(x-a_i)$ i.e. $y$ is solution of $\alpha(X)$.

\par (b) There is $i$ such that $V(x-a_i)\notin VM_0$. Let $j\not=i$. Then, $V(x-a_j)=\min\{V(x-a_i), V(a_i-a_j)\}$ as $V(x-a_i)\not=V(a_i-a_j)\in VM_0$. Let $\widetilde{\alpha}(X)$ be the system where we substitute in $\alpha$ $V(X-a_j)$ by $V(X-a_i)$ if $V(x-a_i)< V(a_i-a_j)$ and by $V(a_i-a_j)$ if $V(x-a_i)>V(a_i-a_j)$ and we add the inequalities $V(x-a_i)< V(a_i-a_j)$, $V(x-a_i)< V(a_i-a_j)$ according to the case where $j$ falls. Then, if $y\in M'$ is such that $M'\vDash \widetilde{\alpha}(y)$ then $V(y-a_j)=\min\{V(a_i-a_j),V(y-a_i)\}$. So, $M'\vDash \alpha(y)$.
\par Let $t:= V(x-a_i)$. Let $\widetilde{\alpha}_B(t)$ be the system of quantifier-free formulas in the language of discrete ordered sets obtained when we replace $V(X-a_i)$ by $t$. Let us recall that in our language, $Th(VM)=Th(VM')$ admits the elimination of quantifiers. So, there is $t'\in VM'$ such that $M'\vDash \widetilde{\alpha}(t')$. Let $y\in M'$ such that $V(y-a_i)=t'$. Then, $M'\vDash \widetilde{\alpha}(y)$.
\end{proof}

\begin{lemma}\label{congruences system} Let $M'$ be a model of $T_{pV,N}$ and $M$ be a structure that is a pure subgroup of $M'$. Let $\overline{k},\overline{l}\subset \Z$, $\overline{n},\overline{m}\subset \N$ and $\overline{a},\overline{b}\subset M$. Let 
$$\beta(X)\equiv \bigwedge_i k_iX\equiv_{n_i}a_i\ \wedge\bigwedge_j \neg l_jX\equiv_{m_j}b_j.$$
If $M'\vDash \exists X \beta(X)$ then $M\vDash \exists X \beta(X)$. Furthermore, if $c\in M$ is so that $M\vDash \beta(c)$, then there is $t\in \N$ such that for all $y\in M$ with $y\equiv_t c$, $M\vDash \beta(y)$.
\end{lemma}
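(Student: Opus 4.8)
The plan is to reduce $\beta$ to its congruence data modulo $N:=\operatorname{lcm}(n_i,m_j)$ and to observe that $M$ and $M'$ carry the same such data, because $M$ is pure in $M'$ and, being an $\mathcal{L}_{pV}$-substructure, contains the constants $1$ and $c_{ij}$. The first step is that whether $\beta(x)$ holds, for $x\in M$ or for $x\in M'$, depends only on the class of $x$ modulo $NM$ (resp.\ $NM'$): the solution set of each conjunct $k_iX\equiv_{n_i}a_i$ is a union of cosets of $n_iG\supseteq NG$, and likewise the complement of the solution set of each $\neg(l_jX\equiv_{m_j}b_j)$, since $n_i,m_j\mid N$ and the parameters $a_i,b_j$ lie in the common part. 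So $\beta$ factors through the residue groups $M/NM$ and $M'/NM'$.

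The key claim is then that the inclusion $M\hookrightarrow M'$ induces a surjection $M\twoheadrightarrow M'/nM'$ for every $n$. For each prime $q_i$ the constants $c_{ij}$ are interpreted in the model $M'$ by a full set of representatives of the cosets of $q_iM'$, so $M\to M'/q_iM'$ is onto. To pass to arbitrary $n$ I would use the Chinese Remainder Theorem, which is valid in any torsion-free abelian group (models of $T_{pV,N}$, hence their subgroups, are torsion-free): $q^eG+r^fG=G$ and $q^eG\cap r^fG=\operatorname{lcm}(q^e,r^f)G$ for distinct primes, reducing the claim to $n=q^e$; and then a Nakayama-type step: if $H\leq M'/q^eM'$ is the subgroup generated by the images of the $c_{ij}$, then $H$ surjects onto $M'/q_iM'$, whence $M'/q^eM'=H+q(M'/q^eM')$, and since $qH\subseteq H$ this iterates to $M'/q^eM'=H+q^e(M'/q^eM')=H$. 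Because $q^e$ annihilates $M'/q^eM'$, nothing here requires the quotient to be finite, so the case $t_q=\infty$ is covered as well.

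Assembling: suppose $M'\vDash\exists X\,\beta(X)$, say $\beta(x')$ with $x'\in M'$. By the surjection, pick $x\in M$ with $x\equiv_N x'$; then $\beta(x)$ still holds in $M'$ by the first paragraph. It remains to transfer $\beta(x)$ down to $M$: for a positive conjunct, $k_ix-a_i\in n_iM'$ and $k_ix-a_i\in M$, hence $k_ix-a_i\in M\cap n_iM'=n_iM$ by purity; for a negated conjunct, $l_jx-b_j\notin m_jM'\supseteq m_jM$, so it is not in $m_jM$ either. Thus $M\vDash\beta(x)$. Finally the ``furthermore'' is immediate with $t=N$: if $y\equiv_N c$ then $y$ and $c$ lie in the same coset of $NM$, so by the first paragraph $M\vDash\beta(y)$ iff $M\vDash\beta(c)$.

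The step I expect to be the real work is the surjectivity $M\twoheadrightarrow M'/nM'$, i.e.\ verifying that the coset representatives built into $\mathcal{L}_{pV}$ generate every residue group, including the infinite ones that occur when some $t_q=\infty$; the Chinese-remainder reduction and the Nakayama trick dispatch it, but one must check that the latter genuinely uses no finiteness. The remaining ingredients — the mod-$N$ invariance of $\beta$ and the two applications of purity — are routine.
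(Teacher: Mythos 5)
Your reduction of $\beta$ to its residue modulo $N=\operatorname{lcm}(n_i,m_j)$, the two uses of purity, and the ``furthermore'' with $t=N$ are all fine. The gap is the key claim that the inclusion induces a surjection $M\twoheadrightarrow M'/nM'$ for every $n$. This is true exactly when the relevant indices are finite, and false in general when some $t_q=\infty$: the axioms of $T_{pV,N}$ only force the constants $c_{qj}$ to be \emph{pairwise} incongruent modulo $qG$ (the index being infinite is expressed by the scheme $\Phi_{q,k}$, which cannot say that the constants represent \emph{all} cosets). In an $\aleph_1$-saturated model $M'$ with $[q]M'=\infty$ the quotient $M'/qM'$ is uncountable (the type ``$x\not\equiv_q c_{qj}$ for all $j$'' is finitely satisfiable by a pigeonhole argument from $\Phi_{q,k}$), while in the intended application of the lemma $M$ is a small substructure; so $M\to M'/qM'$ cannot be onto, and you cannot pick $x\in M$ with $x\equiv_N x'$. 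Your remark that ``nothing here requires the quotient to be finite'' addresses only the Nakayama iteration; the finiteness is needed at the base case, namely that the constants exhaust the cosets of $qM'$, which purity does not supply either (purity gives injectivity of $M/nM\to M'/nM'$, never surjectivity).

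This infinite-index case is precisely the heart of the lemma, and the paper handles it differently: after normalising $k_i=l_j=1$ and collapsing the positive congruences to a single $X\equiv_n a$ via the CRT-type Fact, the negated congruences of \emph{finite} index are absorbed using purity (replace $b_j$ by an $M$-representative of the class of the $M'$-solution, which exists because the index is finite), and the remaining negated congruences, of infinite index, are killed by B.~H.~Neumann's lemma: finitely many cosets of subgroups of infinite index cannot cover the coset $a+nM$, so a solution exists in $M$ without any need to match the residue of the $M'$-solution. Your argument works verbatim only in the all-finite-index case; to repair it in general you would have to import exactly this Neumann-type covering argument for the infinite-index incongruences.
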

\begin{Remark} For all $n$, $[n]M'=[n]M$. If $[n]M'<\infty$, this follows from the pureness assumption. If $[n]M'=\infty$, $[n]M=\infty$ as $M$ contains the constants $c_{nk}$ and by definition of our theory there are infinitely many $k\not=l$ such that $c_{nk}\not\equiv_nc_{nl}$.
\end{Remark}
\begin{proof}
Let $K=\prod k_il_j$. Then, $k_iX\equiv_{n_i}a_i$ is equivalent to $KX\equiv_{n_i(K/k_i)} (K/k_i)a_i$; similarly for the incongruences. We proceed to a change of variables $Y=KX$ and we extend $\beta$ by $Y\equiv_K 0$. So, up to this transformation,  we may assume $k_i=l_i=1$. Next, if there is no incongruences, then by the following fact we are done.
\begin{Fact}
$\bigwedge_i X\equiv_{n_i}a_i$ has a solution in $M'$ iff $gcd(n_i,n_j)$ divides $(a_i-a_j)$. In this case the set of solution is of the form $X\equiv_{n} a$ for some $a\in M$ and $n=lcm(n_1,\cdots, n_k)$.
\end{Fact}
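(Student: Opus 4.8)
This \emph{Fact} is just the Chinese Remainder Theorem for the abelian group underlying $M$ (equivalently $M'$), so my plan is to run the classical argument while being careful that it uses nothing beyond the abelian group structure — in particular neither the $p$-valuation nor torsion-freeness enter. Since $M$ is pure in $M'$ and every $a_i$ lies in $M$, the condition $\gcd(n_i,n_j)\mid(a_i-a_j)$, which here means $a_i-a_j\in\gcd(n_i,n_j)M$, holds in $M$ iff it holds in $M'$; and any solution I exhibit in $M$ is a fortiori a solution in $M'$. So it is enough to prove: the system is solvable in $M$ iff it is pairwise compatible, and when it is, the solution set is a single $\equiv_n$-class with a representative in $M$.

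I would first record two elementary facts about the subgroups $kM$ of an abelian group. Bézout gives $n_iM+n_jM=\gcd(n_i,n_j)M$; from this necessity is immediate, since a solution $x$ gives $a_i-a_j=(x-a_j)-(x-a_i)\in n_iM+n_jM=\gcd(n_i,n_j)M$. Second, for coprime $m,n$, writing $um+vn=1$ and $y=mw=nz$ for $y\in mM\cap nM$ yields $y=u(my)+v(ny)\in mnM$, so $mM\cap nM=mnM$; iterating over a prime factorization gives $NM=\bigcap_{q\mid N}q^{v_q(N)}M$ for every $N$, and consequently $\bigcap_i d_iM=\operatorname{lcm}(d_i)M$ for any finite family of positive integers $d_i$. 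These two observations are the entire engine of the proof.

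For sufficiency I would first treat the case where all moduli are powers of a single prime $q$, say $n_i=q^{e_i}$ with $e_{i^*}$ maximal: then $\gcd(n_i,n_{i^*})=q^{e_i}$, so compatibility gives $a_i-a_{i^*}\in q^{e_i}M=n_iM$, i.e. $a_{i^*}\in M$ already solves the system. For general moduli, set $n=\operatorname{lcm}(n_1,\dots,n_k)$ and for each prime $q\mid n$ choose $i(q)$ with $v_q(n_{i(q)})=v_q(n)$; via the isomorphism $M/nM\cong\prod_{q\mid n}M/q^{v_q(n)}M$ (immediate from the coprime-intersection fact together with $\Z/n\Z\cong\prod_{q\mid n}\Z/q^{v_q(n)}\Z$) I obtain $a\in M$ with $a\equiv_{q^{v_q(n)}}a_{i(q)}$ for every $q\mid n$, and a short bookkeeping using $v_q(n_i)\le v_q(n)$ and pairwise compatibility shows $a\equiv_{q^{v_q(n_i)}}a_i$ for all $q\mid n_i$, hence $a\equiv_{n_i}a_i$ for every $i$ (again by the second fact). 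Finally, if $a,a'$ are two solutions in $M'$ then $a-a'\in\bigcap_i n_iM'=nM'$ by the second fact, and conversely every element of $a+nM'$ is a solution, so the solution set is exactly $\{X:X\equiv_n a\}$ with $n=\operatorname{lcm}(n_1,\dots,n_k)$, as asserted. I foresee no real obstacle; the only point requiring care is this gcd/lcm bookkeeping, which is why I would state the coprime-intersection fact in the sharpened form $\bigcap_i d_iM=\operatorname{lcm}(d_i)M$ before invoking it.
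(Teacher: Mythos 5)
Your proof is correct, and it is exactly the argument the paper has in mind: the paper dismisses this Fact with ``classical result for systems of congruences in $\Z$, easily adapted'', and your write-up is precisely that adaptation, using only Bézout identities and integer-coefficient manipulations valid in any abelian group, with the purity of $M$ in $M'$ invoked correctly to transfer compatibility and to place the representative $a$ in $M$. Nothing is missing; you have simply supplied the details the paper omits.
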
 

This is a classical result for system of congruences in $\Z$. Its proof can be easily adapted in our theory.  So, $\beta$ is equivalent to 
$$X\equiv_{n}a\ \wedge\bigwedge_j \neg X\equiv_{m_j}b_j $$
for some $n\in \N$ and $a\in M$. Next, we may assume that $[m_j]M=\infty$. For assume $[m_1]M$ finite. Then let $y$ be a solution of $\beta$ in $M'$. As $[m_1]M$ finite and $M$ pure in $M'$, there is $c\in M$ such that $c\equiv_{m_1} y$. It is sufficient to prove that the system $X\equiv_n a\wedge X\equiv_{m_1} c\wedge \bigwedge_{j>1} \neg X\equiv_{m_j}b_j$ has a solution in $M$. We can repeat the argument for each $j$ such that $[m_j]M$ is finite then apply again the above fact.

\par We deal now with incongruences of subgroups with infinite index. Fix $j$. By the last fact, $X\equiv_{n}a \wedge X \equiv_{m_j}b_j$ is equivalent to $X\equiv_{lcm(m_j,n)}{b'}_j$ for some ${b'}_j\in M$. So, $X\equiv_{n}a \wedge \neg X\equiv_{m_j}b_j$ is equivalent to $X\equiv_{n}a \wedge \neg X\equiv_{lcm(m_j,n)}{b'}_j$. So, we may assume that our system is of the type
$$X\equiv_{n}a\ \wedge\bigwedge_j \neg X\equiv_{m_j}b_j \qquad (\beta')$$
where $n$ divides $m_j$ and $[nM:m_jM]=\infty$.
\par The lemma is now a corollary of \cite{Neumann} Lemma 4.1: 
\begin{Fact}\label{Neumann Lemma}[Neumann \cite{Neumann} Lemma 4.1] Let $G$ be a group and $C_1,\cdots, C_n$ be subgroups of $G$. Then if for some $g_i\in G$, $G=\bigcup_i C_ig_i$, then at least one of the $C_i$ has finite index.
\end{Fact}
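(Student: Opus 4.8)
The plan is to give the standard inductive proof of Neumann's covering lemma. The key observation is that the right induction parameter is not the number $n$ of cosets but the number $d$ of \emph{distinct} subgroups occurring among $C_1,\dots,C_n$: the reduction step I will use replaces all cosets of one fixed subgroup by cosets of the remaining ones, which may increase the total number of cosets but strictly decreases $d$.

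First I would dispose of the base case $d=1$. Here all the $C_i$ equal a single subgroup $C$, so $G=\bigcup_{i=1}^n Cg_i$ is a union of at most $n$ right cosets of $C$; since the right cosets of $C$ partition $G$, this is impossible unless $[G:C]\le n<\infty$, which already gives the conclusion.

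For the inductive step I would argue by contradiction: assume the lemma for covers involving fewer than $d$ distinct subgroups and suppose $G=\bigcup_{i=1}^n C_ig_i$ with every $C_i$ of infinite index. Pick one subgroup-value $C$ and let $J=\{i:C_i=C\}$, a nonempty proper subset of $\{1,\dots,n\}$ (using $d>1$). Since $[G:C]=\infty$ and $J$ is finite, the cosets $\{Cg_j:j\in J\}$ cannot cover $G$; choose $x_0\in G$ outside their union. Because distinct right cosets of $C$ are disjoint, $cx_0\notin\bigcup_{j\in J}Cg_j$ for every $c\in C$, and hence $Cx_0\subseteq\bigcup_{i\notin J}C_ig_i$. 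Then for $j\in J$ and any $y=cg_j\in Cg_j$ we have $cx_0=yg_j^{-1}x_0\in C_ig_i$ for some $i\notin J$, say $cx_0=c'g_i$ with $c'\in C_i$, so $y=c'g_ix_0^{-1}g_j$; thus $Cg_j\subseteq\bigcup_{i\notin J}C_i(g_ix_0^{-1}g_j)$. Substituting these back yields a cover
$$G\subseteq\Bigl(\bigcup_{j\in J}\bigcup_{i\notin J}C_i\,(g_ix_0^{-1}g_j)\Bigr)\cup\Bigl(\bigcup_{i\notin J}C_ig_i\Bigr)$$
by cosets of the $d-1$ distinct subgroups $\{C_i:i\notin J\}$. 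The induction hypothesis forces one of these to have finite index, contradicting the standing assumption; hence some $C_i$ must have finite index after all.

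I do not expect a genuine obstacle here — the argument is elementary — but two points need care. One must use the correct induction parameter (the number $d$ of distinct subgroups, not the number $n$ of cosets), since the substitution step inflates the number of cosets. And one must verify that the \emph{entire} coset $Cx_0$, not merely the single point $x_0$, is absorbed into the part of the cover indexed by $i\notin J$; this is precisely where the disjointness of the right cosets of $C$ is used.
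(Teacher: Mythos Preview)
Your proof is correct and is the standard inductive argument for Neumann's covering lemma; the induction on the number $d$ of \emph{distinct} subgroups (rather than on $n$) is exactly the right parameter, and the key step --- translating the entire coset $Cx_0$ rather than the single point $x_0$ --- is handled properly via the disjointness of right cosets.

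The paper does not give a proof of this fact at all: it is stated as a \emph{Fact} with a reference to Neumann's original paper (Lemma~4.1 there) and is used as a black box in the proof of Lemma~\ref{congruences system} and Proposition~\ref{extension isom}. So your write-up supplies strictly more than the paper does on this point; there is nothing to compare in terms of approach, since the paper's approach is simply to cite the result.
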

As $[nM:m_jM]=\infty$, by the above fact, $(\beta')$ has a solution $c$ in $M$. The last statement is immediate: Let $t$ be the least common multiple of $n,m_1,\cdots, m_k$ then any $x\in M$ such that $x\equiv_t c$ is solution of $(\beta)$.
\end{proof}

\begin{prop}\label{extension isom}
 Let $M',N'$ be models of $T_{pV,N}$ and $M,N$ be pure subgroups of $M'$ and $N'$. Assume that $N'$ is $|M|$-saturated. Let $h:M\rightarrow N$ be an isomorphism of $p$-valued groups. Let $a\in M'\setminus M$. Then, there is $b\in N'$ such that for all $n\in \N$, $k,l,r\in\Z$, $x\in M$ and $\gamma\in VM$,
$$V(la-x)+r\Square{}{}\gamma\mbox{ iff } V(lb-h(x))+r\Square{}{}h(\gamma).$$
$$(\neg) ka-x\equiv_n 0 \mbox{ iff } (\neg)kb-h(x)\equiv_n 0 $$
Furthermore, $h$ can be extended to an isomorphism of $p$-valued groups $\widetilde{h}:M\langle a\rangle\rightarrow N\langle b\rangle$ such that $\widetilde{h}(a)=b$.
\end{prop}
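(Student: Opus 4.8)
The plan is a standard one-point (``transcendental-like'') extension: exhibit the quantifier-free type that $b$ is forced to have over $N$, realize it by saturation, and then check that $a\mapsto b$ extends $h$. Identify $M$ with $N$ along $h$, so that $M$ becomes a common pure subgroup of $M'$ and $N'$ and $h=\mathrm{id}_M$. Let $\Sigma(Y)$ be the set of $\L_{pV}$-formulas in the single first-sort variable $Y$, with parameters from $M\cup VM$, recording: for all $l,r\in\Z$, $x\in M$, $\gamma\in VM\cup\{\infty\}$, which of $<,=,>$ holds between $V(lY-x)+r$ and $\gamma$, the choice being the one $a$ makes in $M'$; and for all $k\in\Z$, $n\in\N$, $x\in M$, the congruence $kY-x\equiv_n 0$ or its negation, again as it holds of $a$. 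By $|M|$-saturation of $N'$ it suffices to show that every finite subset of $\Sigma$ is satisfiable in $N'$. Note that, since $M$ is pure in $M'$ and $M'$ is torsion-free, $la-x\neq 0$ whenever $l\neq 0$, $x\in M$; hence $\Sigma$ asserts $V(lb-x)<\infty$ for all such $l,x$, so in particular $b\notin M$, which is what will make $M\langle b\rangle$ behave like $M\langle a\rangle$.

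A finite fragment of $\Sigma$ is a conjunction $\alpha(Y)\wedge\beta(Y)$ of a valuation system and a congruence system, both true of $a$ in $M'$. I would clear the coefficients $l_i$ occurring in $\alpha$ by passing to $Z=LY$ ($L$ the l.c.m.\ of the $l_i$, rewriting $V(l_iY-x_i)$ through the axiom $V(nz)=V(z)+v_p(n)$) and adjoining $Z\equiv_L 0$ to $\beta$, and then run the reductions from the proof of Lemma~\ref{congruences system} inside $M'$ with $La$ as the reference solution. This reduces us to the case where $\alpha$ has exactly the shape handled by Lemma~\ref{balls system} and $\beta$ is $Z\equiv_n a\wedge\bigwedge_j\neg(Z\equiv_{m_j}b_j)$ with $a,b_j\in M$, $n\mid m_j$, $[nN:m_jN]=\infty$, the whole system still witnessed by $La$ in $M'$. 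The point of running the $\beta$-reduction is that its positive part is now a single congruence relative to a parameter $a\in M$ that $La$ genuinely satisfies, so that $V(La-a)\geq v_p(n)$.

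The main obstacle is the last step: forcing a single element of $N'$ to satisfy $\alpha$ and $\beta$ at once, i.e.\ making the ``ball'' produced by Lemma~\ref{balls system} meet the congruence solution set. I would enlarge $\alpha$ to $\alpha^+$ (still of the form of Lemma~\ref{balls system}) by conjoining $V(Z-a)\geq v_p(n)$ and $V(Z-c)<\infty$ for every parameter $c$; $La$ still satisfies $\alpha^+$, so by Lemma~\ref{balls system} there is $Z_0\in N'$ with $\alpha^+(Z_0)$, and since $Z_0$ differs from every parameter the solution set of $\alpha^+$ in $N'$ contains a ball $B=\{Z:V(Z-Z_0)\geq\gamma^*\}$ with $\gamma^*\in VN'$ chosen $\geq v_p(n)$ and above the finitely many $V(Z_0-c)$. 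By regular density $nN'$ is dense in $\{V\geq v_p(n)\}$, so some $u\in nN'$ has $V\big((Z_0-a)-u\big)\geq\gamma^*$, whence $Z_1:=a+u\in B$ satisfies $Z_1\equiv_n a$, and $B\cap\{Z\equiv_n a\}=Z_1+\big(nN'\cap\{V\geq\gamma^*\}\big)$. Using the equidistribution axioms — after the elementary reduction, via the successor structure on $VN'$, of $[\,nN'\cap\{V\geq\gamma^*\}:m_jN'\cap\{V\geq\gamma^*\}\,]=\infty$ to equidistribution for the infinite-index prime-power divisor of $m_j/n$ — together with Neumann's Lemma (Fact~\ref{Neumann Lemma}), I can choose $Z_1'\in B\cap\{Z\equiv_n a\}$ lying outside the finitely many classes $b_j$. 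Then $Z_1'$ witnesses $\alpha\wedge\beta$, and dividing by $L$ gives the desired witness of the original finite fragment in $N'$.

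Finally, having obtained $b$, for $z\in M\langle a\rangle$ write $nz=ma+y$ with $n>0$, $m\in\Z$, $y\in M$, and set $\widetilde h(z)$ to be the unique $w\in N'$ with $nw=mb+h(y)$: it exists because the congruence clauses of $\Sigma$ give $ma+y\equiv_n 0\Leftrightarrow mb+h(y)\equiv_n 0$, it is unique by torsion-freeness, and it is independent of the presentation by the usual cross-multiplication argument. That $\widetilde h$ is a group isomorphism onto $N\langle b\rangle$ — surjectivity from the same congruence equivalence, injectivity from the ``$\infty$''-clauses (if $\widetilde h(z)=0$ then $mb+h(y)=0$, so $ma+y=0$, so $z=0$) — and that it intertwines $V$ with $h|_{VM}$ (comparing $V(mb+h(y))$ with $V(ma+y)$ cut by cut in $VM$ via the valuation clauses of $\Sigma$) are then routine verifications.
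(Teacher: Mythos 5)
Your proposal is correct and follows essentially the same route as the paper's proof: the same partial type realized by $|M|$-saturation, the reduction of the congruence part as in Lemma~\ref{congruences system}, Lemma~\ref{balls system} for the valuational part, and regular density plus equidistribution plus Neumann's Lemma (Fact~\ref{Neumann Lemma}) to intersect the resulting ball with the congruence conditions, followed by the same extension of $h$ to $M\langle a\rangle$. The only differences are bookkeeping: you work directly inside the small ball $\{Z: V(Z-Z_0)\geq\gamma^*\}$ and justify the infinite relative index there explicitly (the paper instead applies Neumann after intersecting with $B(0,v_p(n))$), and you compress the final verification that $\widetilde h$ preserves comparisons between two terms $V(la-x)$ and $V(l'a-x')$, which the paper spells out via the reduction to $l=l'$ and the $\min$ argument --- both versions are fine.
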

\begin{proof}
Let $p(X)$ be the partial type given by formulas
$$V(lX-h(x))+r\Square{}{}h(\gamma)$$
$$(\neg)kX-h(x)\equiv_n 0 $$
where $M'\vDash (\neg)ka-x\equiv_n 0 $ and $M'\vDash V(a-x)\Square{}{}\gamma$ for all $x\in M$ and $\gamma\in VM$. It is sufficient by saturation to prove that the type is finitely consistent. Let $\Phi(X)$ be a conjunction of formulas in $p(X)$. Let $\alpha(X)$ and $\beta(X)$ be the subformula of $\Phi$ such that $\alpha$ involves only valuational (in)equalities, $\beta$ only involves (in)congruences and $\Phi\equiv \alpha\wedge \beta$. Let us remark that $p(X)$ does not contain a formula of the type $V(lX-h(y))=\infty$:  otherwise, $la-x=0$ in $M'$, so as $M$ is a pure, torsion-free subgroup of $M'$, $a\in M$; contradiction. Next, we may assume that $k=l=1$. Indeed, if we multiply each equation by a suitable integer, we may assume $k=l$. Then replace in each equation $kX$ by $X$ and add a congruence $X\equiv_k 0$ in the formula $\Phi$.
\par As in the proof of Lemma \ref{congruences system}, we may assume that $(\beta)$ is:
$$X\equiv_{n}c\ \wedge\bigwedge_j \neg X\equiv_{m_j}b_j$$
where $c,b_j\in N$, $n$ divides $m_j$ and $[nN':m_jN']=\infty$. By regular density, $c+nN'$ is a dense subset of $B:=B(c,v_p(n))$ (the ball of centre $c$ and valuative radius $v_p(n)$). So any solution of $\Phi(X)$ is in $B$. Let $\alpha'(X)\equiv \alpha(X)\wedge X\in B$. Let $\alpha'_M$ be the system obtained from $\alpha'$ when we apply $h^{-1}$ to all parameters. This system has a solution in $M'$ therefore $\alpha'$ has a solution $y$ in $N'$ by Lemma \ref{balls system}. Furthermore, there is $\gamma\in VN'$ such that $B'=B(y,\gamma)$ is a set of solutions of $\alpha'$ in $N'$ (take $\gamma$ be larger than any valuation that appears previously). Let us remark that $B'$ is a subset of $B$. By equidistribution, there is a solution $y'$ of $\beta$ in $B'$. For let $B''=B(0,v_p(n))\supset B(y-c,\gamma)$. Apply Fact \ref{Neumann Lemma} with $G=nN'\cap B''$ (this is a group) and $G'=m_iN'\cap B''$. By equidistribution, $[G:G']=\infty$. Therefore, by Fact \ref{Neumann Lemma} and equidistribution there is $y'\in B(y-c,\gamma)$, $y'\equiv_n 0$ and $y'\not\equiv_{m_j} b_j-c$ for all $j$.  
 So $y'+c$ is a realisation of $\alpha\wedge \beta$ in $N'$. 
 This proves that the type $p(X)$ is finitely consistent in $N'$. 
\par Let $b$ be a realisation of this type in $N'$. Then $\widetilde{h}$ is an isomorphism. Indeed it is immediate that this is a isomophism of groups. It remains to prove that it is a morphism of valued groups. For it is sufficient to show that for all $l,l'\in \N$, $r,r'\in \Z$, $x,x'\in M$,
$$V(la-x)+r\Square{}{}V(l'a-x')+r'\mbox{ iff } V(lb-h(x))+r\Square{}{}V(l'a-h(x'))+r'.$$
We replace $V(la-x)+r\Square{}{}V(l'a-x')+r'$ by $V(ll'a-l'x)+r+V(l)\Square{}{}V(ll'a-l'x')+r'+V(l')$. So, we may assume $l=l'$. If $\gamma=V(la-x)\in VM$, then
$$V(la-x)+r\Square{}{}V(la-x')+r'\mbox{ iff } V(la-x)=\gamma\wedge \gamma+r\Square{}{}V(la-x')+r'.$$
Then by choice of $b$, $V(lb-h(x))=h(\gamma)\wedge h(\gamma)+r\Square{}{}V(lb-h(x'))+r'$. So we are done. The case $V(la-x')\in VM$ is similar.
 If $V(la-x)\notin VM$ and $V(la-x')\notin VM$. Then, $V(la-x')=\min\{V(la-x), V(x-x')\}= V(la-x)$ as $V(x-x')\in VM$. So, 
$$V(la-x)+r\Square{}{}V(l'a-x')+r'\mbox{ iff } r\Square{}{}r' \wedge V(la-x')<V(x-x')\wedge V(la-x)<V(x-x').$$
Then, by choice of $b$, $V(lb-h(x'))<V(h(x)-h(x'))\wedge V(lb-h(x))<V(h(x)-h(x'))$. So, $ V(lb-h(x))+r\Square{}{}V(l'b-h(x'))+r'$ holds as $V(lb-h(x))=V(lb-h(x'))$.

\end{proof}

The second part of Theorem \ref{EQ valued groups} follows from the above proposition:
\begin{proof}
\par Let $M',N'$ be saturated models of $T_{pV,N}$ and $M,N$ be small substructures and $h$ be an isomorphism between $M$ and $N$. By the choice of our language $M$ and $N$ are pure subgroups of $M'$ and $N'$. Let $a\in M'\setminus M$. Let $p$ be its quantifier-free type over $M$. First remark that $p$ does not contain a formula of the form $kX-b=0$ with $k\in \Z$ and $b\in M$. Indeed, if this is not the case then $ka=b$ and as $M$ is a pure subgroup of $M'$ and $M'$ is torsion-free, $a\in M$: contradiction. Next, we observe that any formula in $p$ is equivalent to a formula like in Proposition \ref{extension isom}: indeed, a formula of the form $\neg V(kX-b)\leq \gamma$ is equivalent to $V(kX-b)> \gamma$. Similarly for any formula that involves a negation attached to a  valuation (in)equality. Finally, $\neg kX-b=0$ is equivalent to $V(kX-b)<\infty$.
\par So, by Proposition \ref{extension isom}, there is $b\in N'$ a realisation of the image of $p$ by $h$ and $h$ extends to an isomorphism between $M\langle a\rangle$ and $N\langle b\rangle$. As $M\langle a\rangle$ and  $N\langle b\rangle$ are small pure subgroup of $M$ and $N$, we obtain a back-and-forth system. This completes the proof of quantifier elimination.
\end{proof}

\par Before we end this section, we prove that the theory of $p$-valued groups is NIP whenever all indices $[n]G$ are finite. We refer to \cite{Simon} for definitions and properties of NIP theories.
\begin{theorem}\label{p-valued group are NIP}
 For all $N\in \N^\N$,  $T_{pV, N}$ is NIP. In particular,  $Th((\Z, +,-,0,1, \equiv_n), (\N\cup\{\infty\}, 0,\infty, S), v_p)$ is NIP.
\end{theorem}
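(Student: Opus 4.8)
The plan is to deduce NIP from the quantifier elimination of Theorem~\ref{EQ valued groups}. Recall that a theory is NIP provided every formula $\phi(x;\bar y)$ in which $x$ is a \emph{single} variable is NIP (see \cite{Simon}), and that Boolean combinations of NIP formulas are NIP; so it suffices to show that every atomic $\mathcal{L}_{pV}$-formula with one free variable is NIP. We distinguish according to the sort of $x$.

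Suppose first that $x$ ranges over the ordered sort. In any model of $T_{pV,N}$ the set $VG\cup\{\infty\}$ is discretely ordered with least element $0_\Gamma$ and greatest element $\infty$, so $S$ (``least element strictly above'', with $S(\infty)=\infty$), $0_\Gamma$ and $\infty$ are all $<$-definable; hence the second sort is interdefinable with the pure linear order $(VG\cup\{\infty\},<)$. Any atomic formula with $x$ in this sort --- after treating each subterm $V(t(\bar y))$ (for $t$ a group term in the parameters) simply as a parameter from $VG$ --- is equivalent to a formula of a pure linear order with parameters, and pure linear orders are NIP. So these formulas are NIP.

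Now suppose $x$ ranges over the group sort. By quantifier elimination the atomic formulas to consider are, after absorbing parameters into a single group constant $b$: (i) $kx=b$, defining a singleton or the empty set; (ii) a congruence $kx\equiv_n b$, whose solution set (if nonempty) is a coset of the fixed subgroup $\{z: kz\in nG\}$, which has finite index under our hypothesis that all $[n]G$ are finite; and (iii) valuational (in)equalities, either $S^{r}\bigl(V(kx-b)\bigr)\,\Square{}{}\,\gamma$ with $\gamma$ a parameter from $VG$, or the \emph{diagonal} form $S^{r}\bigl(V(kx-b)\bigr)\,\Square{}{}\,S^{r'}\bigl(V(k'x-b')\bigr)$. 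In case (i) the family is plainly NIP, and in case (ii) the defining sets all lie among the classes of a single finite partition of $G$, so that family is NIP (a family of sets each contained in a class of a fixed partition has VC-dimension $\le 1$). In case (iii), the set defined by $S^{r}(V(kx-b))\,\Square{}{}\,\gamma$ is the preimage under the scaling map $x\mapsto kx$ of a ball of $(G,V)$; since $V(k(z-z'))=v_p(k)+V(z-z')$, this map carries balls to balls, so the defined set is a ball of $G$, its complement, or a difference of two nested balls. As any two balls of an ultrametric space are nested or disjoint, the family of all balls of $G$ has VC-dimension at most $2$, and hence so does the family generated by these atomic formulas; they are NIP. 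For the diagonal formula we may assume $k=k'$ after clearing; writing $\rho=V(b-b')$, the group $G$ splits into the open ball where $V(x-b)>\rho$ (on which $V(kx-b')$ is the constant $v_p(k)+\rho$), the open ball where $V(x-b')>\rho$ (on which $V(kx-b)$ is constant), and the remaining region (on which $V(kx-b)=V(kx-b')$, so the formula has constant truth value). Thus the diagonal formula defines, uniformly in $b,b'$, a Boolean combination of boundedly many balls and co-balls of $G$, hence an NIP family.

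Putting the cases together, every atomic $\mathcal{L}_{pV}$-formula in one variable is NIP, so by quantifier elimination every formula is NIP and $T_{pV,N}$ is NIP; the ``in particular'' statement is the instance $N=(1,1,1,\dots)$ realised by $(\Z,v_p)$. The main obstacle is the diagonal valuational case: there $x$ occurs inside $V$ on both sides and the ``radius'' itself ranges over a whole sort, so one cannot reduce it directly to a statement about a single ball; the point that rescues it is the laminar (nested-or-disjoint) structure of the family of balls together with the fact that each choice of parameters produces only boundedly many of them.
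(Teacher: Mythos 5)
Your argument is correct, but it takes a genuinely different route from the paper's. Both proofs rest on the quantifier elimination of Theorem \ref{EQ valued groups}, but the paper then runs the indiscernible-sequence argument standard for valued structures (as in \cite{Simon}, Appendix A): it reduces a formula to one of three shapes --- a pure group-sort formula, a pure order-sort formula, or an order-sort formula evaluated at terms $V(P_j(x,\overline{y}))$ --- and disposes of the third by showing that along an indiscernible sequence $(x_i)$ the values $V(nx_i+B)$ are eventually constant or equal to $S^{v_p(n)}(a_i)$ for an indiscernible sequence $(a_i)$ in the value sort, so that NIP of each sort can be quoted. You instead reduce, via the standard facts that NIP can be tested on formulas with a single object variable and is preserved under Boolean combinations, to atomic formulas, and you bound VC-dimension by hand: solution sets of congruences are pairwise disjoint cosets of one subgroup, valuational conditions define balls (a laminar, nested-or-disjoint family, hence of VC-dimension at most $2$), and the diagonal comparison splits the group into boundedly many balls and a region where the two valuations agree. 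Your proof is more elementary and gives explicit uniform VC bounds; the paper's proof is the transfer argument one would also use when the induced sorts are NIP for less transparent reasons, and it does not need to inspect the atomic formulas individually. Two slips in your write-up are harmless but worth fixing: in the diagonal case you write $V(x-b)$ where you mean $V(kx-b)$, and with $\rho=V(b-b')$ the constant value of $V(kx-b')$ on the region $V(kx-b)>\rho$ is $\rho$ itself, not $v_p(k)+\rho$ (your formula presupposes $b,b'\in kG$); and in the congruence case the correct justification is that the sets in the family are pairwise disjoint (being cosets of a single subgroup), which bounds the VC-dimension by $1$ --- ``each set contained in a class of a fixed partition'' would not by itself bound anything, and note that finiteness of $[n]G$ is not actually needed for that step.
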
 
 The case $G=\Z$ has been proved by F. Guignot using an argument of coheir counting in \cite{Guignot-thesis}. We propose here a proof using sequences of indiscernibles, which is essentially the same as the case of valued fields with NIP theory (see \cite{Simon} Appendix A for instance).
\begin{proof}
Let $\mathcal{M}=(M, vM)$ be a model of the theory.
Let $(x_i, i<\omega)$ be a sequence of indiscernibles. We have to prove that for all $\overline{y}\subset \mathcal{M}$ and for all $\Phi$ formula, either $\mathcal{M}$ satisfies $\Phi (x_i,\overline{y})$ for all $i$ large enough or  $\mathcal{M}\vDash \neg\Phi (x_i,\overline{y})$ for all $i$ large enough. By quantifier elimination and properties of NIP theory, we may assume that $\Phi$ is of the type
\begin{enumerate}[(1)]
	\item $\varphi(x_i,\overline{y})$ a $(+,-,0,1, \equiv_n)$-formula and $x_i,\overline{y}\subset M$;
	\item $\varphi(x_i,\overline{y})$ a $(0,\infty, S,<)$-formula and $x_i,\overline{y}\subset VM$;
	\item $\varphi(V(P_1(x_i,\overline{y})), \cdots, V(P_n(x_i,\overline{y})),\overline{z})$ a $(0,\infty, S,<)$-formula and $x_i,\overline{y}\subset M$, $\overline{z}\subset VM$ and $P_i$ is a $\Z$-linear combination.
\end{enumerate}
In case (1) and (2), we are done because the theory of each sort is NIP. We reduce now case (3) to case (2):

\par Let $P(X,\overline{Y})= nX+\sum m_iY_i$ with $n,m_i\in \Z$. We claim that there is $(a_i<\omega)$ sequence of indiscernible in the second sort $VM$ such that for all $i$ large enough, $V(P(x_i,\overline{y}))=S^k(a_i)$ for some $k\in \Z$ (which depends only on $n$) or $V(P(x_i,\overline{y}))$ is eventually constant: Set $B=\sum m_i y_i$. Then $V(P(x_i,\overline{y}))=V(nx_i+B)$.
\begin{itemize}
	\item First, if $v(x_i)$ is not constant, then by indiscernibility, it is either strictly increasing or strictly decreasing. So, for all $i$ large enough, $V(nx_i+B)=V(nx_i)$ or $V(nx_i+B)=V(B)$. Take $a_i=V(x_i)$.
	\item Otherwise, let $t_i=x_i-x_0$. If $V(t_i)$ is not constant, it has to be decreasing. For assume it is strictly increasing. Then, for all $i>1$, $V(x_i-x_1)=V(t_i-t_1)=V(t_1)$. Therefore, the sequence $V(x_i-x_1)$ is constant while the sequence $V(x_i-x_0)$ is increasing: this contradicts the indiscernibility of the sequence $(x_i)$. Now, $V(nx_i+B)=V(nx_i-nx_0 + B-nx_0)$. The latter is equals to $V(B-nx_0)$ or to $V(nx_i-nx_0)=V(nt_i)$ for all $i$ large enough. Take $a_i=V(x_i-x_0)$ in the second case.
	\item Finally, assume that $V(t_i)$ is constant. Then, by properties of $p$-valued $\Z$-groups, $V(at_i-t_1)>V(t_1)$ for some $1\leq a<p$ (independent of $i$ by indiscernability). As $V(at_i-t_1)>V(t_1)$ and $V(at_j-t_1)>V(t_1)$, 
	$$V(t_i-t_j)=V(at_i-at_j)\geq \min \{V(at_i-t_1),V(at_j-t_1)\}> V(t_1)=V(t_i).$$
		Let $x_\omega$ such that $(x_i,i\leq \omega)$ is indiscernible. As $V(x_2-x_1)=V(t_2-t_1)>V(t_2)=V(x_2-x_0)$, the sequence $V(x_\omega-x_i)$ is increasing by indiscernibility. Take $a_i=V(x_\omega-x_i)$ like in the second case.
\end{itemize}
In all cases, either, $V(nx+B)$ is constant or it is equal to $a_i+v_p(n)=S^{v_p(n)}(a_i)$.

\end{proof}

\section{Expansion of $\Qp$ by a multiplicative subgroup}\label{Expansion of Qp by a multiplicative subgroup}
In this section, we assume $p\not=2$ (the case $p=2$ is similar but require technical changes as in the remark below).
\begin{lemma}\label{subgroup Qp} Let $G<\Qp^*$. Then $G= T^\Z\times D$, where $T\in G$, $D<\Zp^\times$ and $D^s$ is either trivial or dense in $1+p^n\Zp$ for some $n\in\N$ and $s$ that divides  $p-1$.
\end{lemma}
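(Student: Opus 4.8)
The plan is to peel off the ``discrete part'' of $G$ using the $p$-adic valuation, and then to describe the resulting unit part by means of the decomposition $\Zp^\times=\mu_{p-1}\times(1+p\Zp)$, which is available because $p\neq 2$. First I would restrict $v_p$ to $G$: its image is a subgroup of $\Z$, hence either $\{0\}$ or $m\Z$ with $m\geq 1$. In the first case I set $T=1$; in the second I pick $T\in G$ with $v_p(T)=m$. Put $D:=G\cap\Zp^\times=\ker(v_p|_G)$. For $g\in G$ there is a unique $k\in\Z$ with $v_p(g)=km$ (and $k=0$ in the first case), so $gT^{-k}\in D$ and $g=T^k\cdot(gT^{-k})$; moreover $T^\Z\cap D=\{1\}$ since $v_p(T^k)=km\neq 0$ for $k\neq 0$. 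As $G$ is abelian, this exhibits $G$ as the internal direct product $T^\Z\times D$, with $T^\Z\cong\Z$ when $m\geq 1$ (then $T$ is not a root of unity) and $D<\Zp^\times$.

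Next I would analyse $D$. Let $\pi\colon\Zp^\times\to\mu_{p-1}$ be the projection coming from $\Zp^\times=\mu_{p-1}\times(1+p\Zp)$. Then $\pi(D)$ is a subgroup of the cyclic group $\mu_{p-1}$ of order $p-1$, so it is cyclic of some order $s$ dividing $p-1$. Writing $d\in D$ as $d=\zeta u$ with $\zeta=\pi(d)\in\mu_{p-1}$ and $u\in 1+p\Zp$, one gets $d^s=\zeta^s u^s=u^s\in 1+p\Zp$, because $\zeta\in\pi(D)$ has order dividing $s$. Hence $D^s=\{d^s:d\in D\}$, which is a subgroup of $D$ (it is the image of the $s$-th power homomorphism of the abelian group $D$), is contained in $1+p\Zp$.

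Finally I would close things up topologically. Since $p\neq 2$, $1+p\Zp\cong(\Zp,+)$, and the closed subgroups of $\Zp$ are exactly $\{0\}$ and $p^n\Zp$ for $n\geq 0$ (take $n$ to be the least valuation of a nonzero element of the closed subgroup); translating back, the closed subgroups of $1+p\Zp$ are $\{1\}$ and $1+p^n\Zp$ for $n\geq 1$. Therefore the closure of $D^s$ inside $1+p\Zp$ is either $\{1\}$, which forces $D^s=\{1\}$, or $1+p^n\Zp$ for some $n\geq 1$, i.e. $D^s$ is dense in $1+p^n\Zp$. This is precisely the asserted dichotomy. I do not expect a real obstacle here once $p\neq 2$ is assumed — the argument is essentially the structure theory of $\Zp^\times$ — and the only point needing care is to pass to $D^s$ rather than $D$ (the image $\pi(D)$ need not be trivial, so $\overline D$ itself need not sit in $1+p\Zp$) and to invoke the correct list of closed subgroups of $\Zp$; the case $p=2$ requires the adjustments alluded to in the text, since there $\Zp^\times=\{\pm1\}\times(1+4\Zp)$ and $1+2\Zp$ is not of the same shape.
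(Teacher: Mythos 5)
Your proof is correct and follows essentially the same route as the paper: split off $T^\Z$ via the valuation, use the decomposition $\Zp^\times=\mu_{p-1}\times(1+p\Zp)$ to get $s\mid p-1$ with $D^s\subseteq 1+p\Zp$, and then use the (log/exp) identification of $1+p\Zp$ with $(\Zp,+)$ together with the minimal-valuation density argument. Your phrasing of the last step via the classification of closed subgroups of $\Zp$ is only a cosmetic repackaging of the paper's direct density argument for $\log_p(D^s)$ in $p^n\Zp$.
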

\begin{proof}
\par First, let $T\in G$ with minimal positive valuation among the valuations of the elements of $G$. If such an element does not exist, then $G<\Zp^\times$ and we take $T=1$. If $T\not=1$, for all $t\in G$, there is $n\in \Z$ such that $v(t)=nv(T)$. Otherwise, if $t$ has positive valuation, there is $n$ such that $nv(T)<v(t)<(n+1)v(T)$ i.e. $0<v(t/T^n)<v(T)$: this contradicts the minimality of $v(T)$. If $t$ has negative valuation, we replace $t$ by $t^{-1}$. Then, $t=T^nu$ with $u\in G\cap \Zp^\times$, $n\in \Z$ (take $u=t/T^n$). 
\par Now, we may assume that $G<\Zp^\times$. Let us recall that  $\Zp^{\times}\cong \mu_{p-1}\times (1+p\Zp)$ where $\mu_{p-1}$ is the set of $(p-1)$th roots of unity (that is isomorphic to $\Fp^*$ via the residue map $res$). So, $res(G)$ is a subgroup of $\Fp^*$. Let $s$ be its order. Then, for all $g\in G$, $res(g^s)=1$ i.e. $G^s<1+p\Zp$.
\par Finally, let us remind that $1+p^n\Zp$ is isomorphic to $p^n\Zp$ (as groups): 
 $exp_p:p^n\Zp\rightarrow 1+p^{n}\Zp:x\longmapsto \sum_k x^k/k!$ is an isomorphism for all $n>0$ whose inverse is determined by the $p$-adic logarithm map $\log_p$ (given by the usual power series $\sum_{k} (-1)^{k+1}x^k/k$).
Furthermore, $\log_p:(1+ p^n\Zp,\cdot)\rightarrow (p^n\Zp,+)$ is a bicontinuous isomorphism of groups. So, $G':=\log_p G$ is an additive subgroup of $\Zp$. If $G'=\{0\}$ then $G^s=\{1\}$ and $G=\mu_s$. Otherwise, let $n$ be the minimal valuation of the elements of $G'$. Then, let $g\in G'$ with $v(g)=n$. As $g\Z$ is dense in $p^n\Zp$, $G'$ is a dense subset $p^n\Zp$ (by minimality of $n$). So, $G=exp_p(G')$ is a dense subset of $1+p^n\Zp$.
\end{proof}
\begin{Remark} If $p=2$, $\Z_2^\times= 1+2\Z_2$. The exponential is well-defined on $1+4\Z_2$. So, we obtain that $D^2$ either trivial or dense in $1+4\Z_2$.
\end{Remark}
\par First, we deal with the discrete case i.e. when $D$ is a finite group i.e. $D=\mu_s<\Fp^*$. This case is similar to $(\R, 2^{\Z})$ in \cite{vdDries5}.
Let $G=T^\Z\times D<\Qp^*$ with $v_p(T)>0$. Then, the valuation induces an isomorphism between $T^\Z$ and $v((\Qp^k)^*)$ where $k=v_p(T)$. We denote by $\lambda$ the map $\Qp[k]\rightarrow T^\Z:x\longmapsto T^n$ where $n=v_p(x)/k$ and $\Qp[k]$ is the set of elements in $\Qp^*$ with valuation divisible by $k$. 
We work in the language $\mathcal{L}_G=\mathcal{L}_{Mac}\cup \{A,\gamma_T,r,\lambda \}$ where $A$ will be interpreted in $\Qp$ by the subgroup $G$, $\gamma_T$ by $T$, $r$ by a (fixed) $s$th root of unity $\xi$ and $\lambda$ by the above map. Let $K$ be a $p$-adically closed field. Then, let $K[k]$ denote the subgroup of $K^*$ of elements of valuation divisible by $k$. Let us remark that this group is $\mathcal{L}_{Mac}$-definable.
\begin{theorem}\label{discrete case} $T_d(G)=Th(\Qp, +,-,\cdot, 0,1, P_n (n\in \N), G, T, \xi, \lambda)$ admits the elimination of quantifiers. Furthermore, $T_d(G)$ is axiomatised by:  if $(K,+,-,\cdot, 0,1, P_n (n\in \N), A, \gamma_T, d, \lambda)$ is a model of $T_d(G)$ then
\begin{itemize}
	\item $(K,+,-,\cdot, 0,1, P_n (n\in \N))$ is a $p$-adically closed field;
	\item $(A,\cdot, 1)$ is a subgroup of $K[k]$, $\gamma_T\in A$, $A_{tor}=\mu_s=\{1,r,\cdots, r^{s-1}\}$;
	\item $\forall x\in K[k] \left(x\not=0\rightarrow \big[\exists y\in A v_K(x)=v_K(y)\right.$
	$\left.\wedge (\forall z\in A v_K(z)=v_K(x)\rightarrow \vee_{0\leq i<s} z=r^iy)\big]\right)$ where $k=v_p(T)$;
	\item $\lambda: K[k]\rightarrow A$ is a morphism of groups, $\lambda(p^k)=\gamma_T$, for all $z\in K[k] v(\lambda(z))=v(z)$ and $A=\lambda(A)\times A_{tor}$, $A_{tor}\cap \lambda(A)=\{1\}$;
	\item $tp(\gamma_T/\Q)=tp(T/\Q)$, $r^s=1$, $res\ r=res\ \xi$.
\end{itemize}
\end{theorem}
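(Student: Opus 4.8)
The plan is to prove quantifier elimination by the usual substructure--extension test and then deduce the axiomatisation and completeness from it. First I would dispose of the easy half: that $(\Qp,+,-,\cdot,0,1,P_n,G,T,\xi,\lambda)$ satisfies all the listed axioms. This is essentially a repackaging of Lemma~\ref{subgroup Qp} together with the construction of $\lambda$; the only points needing a word are that $v_p(G)=k\Z$ (so $G\subseteq \Qp[k]$ and $\gamma_T=T$ has minimal positive valuation $k$) and that $G=T^\Z\times\mu_s$ exhibits $A_{tor}=\mu_s$ with torsion-free complement $T^\Z=\lambda(G)=\lambda(\Qp[k])$. Before the main argument I would also extract two preliminary facts from the axioms that make the extra structure rigid: (a) $A$ is quantifier-free definable from $\lambda$, namely $x\in A$ iff $x\in K[k]$ and $(x\lambda(x)^{-1})^{s}=1$ (both $K[k]$ and $\{y:y^{s}=1\}$ being $\mathcal{L}_{Mac}$-definable, hence quantifier-free definable by Macintyre's theorem), so the predicate $A$ may be dropped; and (b) $\lambda$ factors through the valuation: using the coset axiom together with $A=\lambda(A)\times A_{tor}$ one checks that $v_K$ is injective on $\Lambda:=\lambda(K[k])=\lambda(A)$, that $\lambda$ kills units and torsion, and that $\lambda(x)$ is \emph{the} element of the torsion-free group $\Lambda$ with valuation $v_K(x)$, with $\gamma_T=\lambda(p^k)$ the element of valuation $k$. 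Thus $\lambda$ is completely determined by the value group together with the single element $\gamma_T$.

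Now let $\mathcal M=(K,\dots)$ and $\mathcal N=(L,\dots)$ be models of the axiomatisation with $\mathcal N$ sufficiently saturated, let $C$ be a common $\mathcal{L}_G$-substructure, and let $a\in M$; I must extend the embedding $f\colon C\to\mathcal N$ to the $\mathcal{L}_G$-substructure generated by $C$ and $a$. After replacing $C$ by its field of fractions (which is again $\lambda$-closed) I may assume the underlying set of $C$ is a subfield $F$ of $K$ containing $\gamma_T$ and $r$. I would split into cases according to $a$. If $a\in A$, write $a=\lambda(a)\cdot\zeta$ with $\zeta\in\mu_s\subseteq F$ and reduce to adjoining $\lambda(a)\in\Lambda$. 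If $a\notin A$ is algebraic over $F$, then $F(a)/F$ is finite and the generated substructure is $F(a,\vec\beta)$ for finitely many $\lambda$-values $\vec\beta$, each subject to an equation $\beta_i^{\,n_i}=\lambda(c_i)$ with $c_i\in F$ (coming from $n_i v_K(\beta_i)\in v_KF$) and pinned down among the $n_i$-th roots of $\lambda(c_i)$ by the condition $\lambda(\beta_i)=\beta_i$, i.e.\ membership in $\Lambda$; these conditions are first order over $F$ and witnessed in $\mathcal M$, so by Macintyre's quantifier elimination for the field reduct and saturation of $\mathcal N$ I can choose compatible images. If $a$ is transcendental over $F$, I treat the valuation first: the $1$-type of $v_K(a)$ over $v_KF$ is a type in a $\Z$-group, hence realised in $v_KL$ by saturation; I then realise a field element of that valuation whose $\mathcal{L}_{Mac}$-type over $F$ is the one prescribed by Macintyre's theorem (distinguishing the immediate, residue-transcendental and value-transcendental sub-cases in the usual way), and finally set the value of $\lambda$ on any newly created valuations to be the corresponding elements of the section $\Lambda_L$. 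Running this one element at a time along a chain exhausting $\mathcal M$ yields the back-and-forth, hence quantifier elimination.

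Completeness, and therefore the identification of the axiomatised theory with $T_d(G)$, then follows formally: by quantifier elimination the theory is pinned down by the quantifier-free type of the $\mathcal{L}_G$-substructure generated by the empty set, which is $\Q(\gamma_T,r)$ with $\lambda$ determined as in (b); the axioms $tp(\gamma_T/\Q)=tp(T/\Q)$ and $r^{s}=1\wedge res\,r=res\,\xi$ fix this substructure up to isomorphism in every model, so the axiomatisation is complete, and by the first paragraph it is satisfied by $(\Qp,\dots)$. I expect the main obstacle to be the bookkeeping for $\lambda$ inside the back-and-forth: one must ensure that the values $\lambda$ is \emph{forced} to take on the elements newly adjoined to the substructure can be realised simultaneously in $\mathcal N$, and this is precisely where the rigidity of $\lambda$ established in (b) and the simplicity of types in a $\Z$-group are used, in combination with the valuation-theoretic analysis of one-variable transcendental extensions and with Macintyre's quantifier elimination.
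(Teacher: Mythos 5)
Your overall strategy (a direct substructure--extension/back-and-forth argument) is legitimate, but note first that it is not the paper's route: the paper proves nothing here directly, it quotes Theorems 1.1 and 1.2 of \cite{Mariaule} for $T_d(T^\Z)$ and observes that $G=\mu_s T^\Z$ is $0$-definable from $T^\Z$ (the $s$th roots of unity being $\mathcal{L}_{Mac}$-definable), so the general case reduces to $s=1$. If you argue from scratch you are in effect reproving \cite{Mariaule}, and your sketch skips exactly the points where that proof has content. The crucial gap is in your extension steps: matching the $\mathcal{L}_{Mac}$-type of $a$ over $F$ together with the value-group type of $v(a)$ does not determine the quantifier-free $\mathcal{L}_G$-type of $a$ over $F$. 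Adjoining one field element forces infinitely many new $\lambda$-values (the $\lambda$-closure is an $\omega$-iterated construction, cf.\ Claim \ref{sub-pseudo-Cauchy}), and these are concrete elements of $A$ whose polynomial relations with $F(a)$ and with each other must be mirrored on the other side; conversely one must rule out unexpected elements of $A$ becoming algebraic over $F(a)$ (the analogue of Fact \ref{alg indep} and of the freeness conditions in the proof of Theorem \ref{completness}). You cannot ``set the value of $\lambda$ on any newly created valuations'': $\lambda$ is already interpreted in $\mathcal{N}$, and the issue is to choose $b$ so that the two $\lambda$-closures correspond. Likewise, in your algebraic case, ``membership in $\Lambda$'' is not an $\mathcal{L}_{Mac}$-condition, so ``Macintyre's QE plus saturation'' is circular: the finite satisfiability in $\mathcal{N}$, over $f(F)$, of statements such as ``there is an element of $A$ of prescribed valuation satisfying this polynomial/power condition'' is precisely what must be derived from the axioms (this is the content of the remark after the theorem that isomorphic value groups force $\Q(A(K))^h\cong\Q(A(L))^h$), and your outline asserts it rather than proves it.

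A secondary point: your preliminary fact (b), that $\lambda$ kills units and is determined by the value group and $\gamma_T$, is not something one can ``check'' from the axioms as listed. They give $\lambda(A)\cap A_{tor}=\{1\}$, not $\lambda(K[k])\cap A_{tor}=\{1\}$: for instance over $\mathbb{Q}_5$ with $s=2$ one may multiply the standard $\lambda$ by the quadratic-residue character of the residue of the unit part (trivial on $A=\mu_2 5^\Z$ and on $5$) and every listed axiom survives, although $\lambda(i)=-1$ for $i$ a primitive fourth root of unity. The intended axiom, present in the later axiomatisation of $T_G$ and in \cite{Mariaule}, is $\lambda(z)=1$ whenever $v(z)=0$; your argument needs it, so you should add it (or the equivalent $\lambda(K[k])=\lambda(A)$ with $\lambda(K[k])\cap A_{tor}=\{1\}$) rather than claim to deduce it. With that fixed, your reductions (dropping the predicate $A$, passing to fraction fields, splitting off $\mu_s\subseteq F$) are fine, but the heart of the theorem remains the type-realisation argument described above, which your proposal does not supply.
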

\begin{Remark}  In the above theorem, the valuation $v_K$ is not part of the language but the above axioms are first-order as for $p$-adically closed fields: $v_K(x)\geq v_K(y)$ iff $y^2+px^2$ is a square.
\end{Remark}
\par This theorem is a corollary of Theorem 1.1 and Theorem 1.2 in \cite{Mariaule}. These theorems give an axiomatisation and quantifier elimination in the special case $T_d(T^{\Z})$. As $G=\mu_sT^\Z$ is definable in $(\Qp, +, \cdot, 0, 1, T^{\Z}, (P_n)_{n\in \N})$, we obtain the above result. Note that the proof of the theorem of gives the following if $K, L\vDash T_d(G)$ have isomorphic value groups, then there is an isomorphism of $\mathcal{L}_G$-structures between $\Q(A(K))^h$ and $\Q(A(L))^h$ (note that both are models of $T_d(G)$). See also Theorem \ref{EQ Log} in Section \ref{Expansion for subanalytic structure}.
\par The above theorem treats the case where $D$ is finite. Let us now deal with the case $s=1$ i.e. $G=T^\Z\times D$ for some $D$ dense subgroup of $1+p^n\Zp$.
 So, $\log_p(D)$ is an additive subgroup of $\Zp$ and therefore $(\log_p(D),v_p)$ is a abelian $p$-valued group. On the other hand, $v_p(exp_p(x)-1)=v_p(x)$ and $v_p(\log_p(1+x))=v_p(x)$. We define $V:D\rightarrow \N\cup\{\infty\}:x\longmapsto v_p(x-1)-n$. Then, $(D,V)$ is an abelian $p$-valued group isomorphic (as valued group) to $(\log_p(D),v_p)$. We will give now an axiomatisation of $Th(\Qp, G)$. First, let us introduce some notions required for our axiomatisation.

\par Let $K$ be a field of characteristic zero and $G$ be a subgroup of $K^*$. Let $a_1,\cdots, a_n\in \Q$ nonzero. We consider the equation
$$a_1x_1+\cdots+a_nx_n=1.$$
A solution $(g_1,\cdots, g_n)$ of this equation in $G$ is called nondegenerate if $\sum_{i\in I}a_ig^i\not=0$ for all $I\subset\{1,\cdots, n\}$ nonempty. We say that $G$ has the \emph{Mann property} if for any equation like above there is finitely many nondegenerate solutions in $G$. Examples of groups with Mann property are the roots of unity in $\C$ \cite{Mann} or any group of finite rank in a field of characteristic zero \cite{Evertse}\cite{Laurent}\cite{vanderPoorten}. In particular, any subgroup of $\Qp^*$ of finite rank has the Mann property.
\par Let $G<K^*$ be a group with the Mann property. Then the Mann axioms are axioms in the language of rings expanded by constant symbols $\gamma_g$ for the elements of $G$ and a unary predicate $A$ for $G$. Let $a_1,\cdots , a_n\in\Q^*$. As $G$ has the Mann property, there is a collection of $n$-uples $\overline{g}_i=(g_{1i},\cdots, g_{ni})$  ($1\leq i\leq l$) in $G^n$ so that these $n$-uples are the nondegenerate solutions of the equation $a_1x_1+\cdots+a_nx^n=1$. The corresponding Mann axiom express that there are no extra nondegenerate solutions in $A$ i.e.
$$\forall\overline{y}\left[\left(\bigwedge_i A(y_i)\wedge\sum_{i=1}^n a_iy_i=1\wedge\bigwedge_{I\subset \{1,\cdots ,n\}}\sum_{i\in I} a_iy_i\not=0\right)\rightarrow \bigvee_{k=0}^l \overline{y}=\overline{\gamma}_{g_k}\right]. $$
The main consequence of Mann axioms that we will use is the following:
\begin{lemma}[Lemmas 5.12 and 5.13 in \cite{Gunaydin-vdD}]\label{G-vdD Lemma 5.12} Let $K$ be a field of characteristic zero, let $G$ be a subgroup of $K^*$ and let $\Gamma$ be a subgroup of $G$ such that for all $a_1,\cdots a_n\in \Q^*$ the equation $a_1x_1+\cdots +a_nx_n$ has the same nondegenerate solutions in $\Gamma$ as in $G$. Then, for all $g, g_1,\cdots, g_n\in G$
\begin{itemize}
	\item if $g$ is algebraic over $\Q(\Gamma)$ of degree $d$ then $g^d\in \Gamma$;
	\item if $g_1,\cdots, g_n$ are algebraically independent over $\Q(\Gamma)$ then they are multiplicatively independent over $\Gamma$.
\end{itemize} 
In particular, if $\Gamma$ is  a pure subgroup of $G$, then the extension $\Q(G)$ over $\Q(\Gamma)$ is purely transcendental.
\end{lemma}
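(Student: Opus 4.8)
The plan splits along the three assertions. The second one is immediate and uses nothing about the Mann property: if $g_1,\dots,g_n$ were multiplicatively dependent over $\Gamma$, say $g_1^{k_1}\cdots g_n^{k_n}=\gamma\in\Gamma$ with some $k_i\neq 0$, then solving for a $g_i$ with $k_i\neq 0$ exhibits it as algebraic over $\Q(\Gamma)$ together with the other $g_j$'s; the contrapositive is exactly the claim. So all the genuine work is in the first item, and the last (``in particular'') will be deduced from it and purity.

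For the first item I would argue as follows. Take the minimal polynomial of $g$ over $\Q(\Gamma)$ and clear denominators so that it has coefficients in the $\Q$-span $\Q[\Gamma]$ of $\Gamma$ inside $K$; since $g$ is a unit we may assume its constant and leading coefficients are nonzero, the latter of degree $d$. Expanding every coefficient as a $\Q$-linear combination of elements of $\Gamma$ turns the relation into a $\Q$-linear dependence $\sum_j a_j\delta_j g^{e_j}=0$ with $a_j\in\Q^{\times}$, $\delta_j\in\Gamma$, $0\le e_j\le d$, the pairs $(\delta_j,e_j)$ pairwise distinct, and each monomial $\delta_j g^{e_j}$ lying in $G$. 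Next I isolate a \emph{minimal} vanishing subsum $S$ that is not homogeneous in the power of $g$; such a subsum must exist, since if every minimal vanishing subsum were homogeneous one could peel them off one at a time and conclude that each coefficient of the original polynomial vanishes, contradicting that the leading one does not. Dividing the relation indexed by $S$ through by one of its monomials presents the remaining monomials as a solution in $G$ of an equation $\sum b_j X_j=1$ with $b_j\in\Q^{\times}$, and minimality of $S$ says precisely that this solution is nondegenerate. By the standing hypothesis it is then already a solution in $\Gamma$, so each ratio $(\delta_j/\delta_{j_0})\,g^{e_j-e_{j_0}}$ lies in $\Gamma$; as the $\delta$'s lie in $\Gamma$ and $S$ is not homogeneous, this gives $g^m\in\Gamma$ for some integer $1\le m\le d$. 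Finally, to upgrade $m$ to $d$: let $m_0$ be the least positive integer with $g^{m_0}\in\Gamma$, so $m_0\le m\le d$; but $g$ is a root of $X^{m_0}-g^{m_0}\in\Q(\Gamma)[X]$, forcing $d\le m_0$, hence $m_0=d$ and $g^d\in\Gamma$.

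For the last assertion, suppose $\Gamma$ is pure in $G$, so $G/\Gamma$ is torsion-free. Let $B\subseteq G$ be maximal among subsets algebraically independent over $\Q(\Gamma)$ and set $F=\Q(\Gamma)(B)$, which is purely transcendental over $\Q(\Gamma)$; it suffices to show $G\subseteq F$, for then $\Q(G)=F$. Given $g\in G$ it is algebraic over $F=\Q(\langle\Gamma\cup B\rangle)$, and the group $\langle\Gamma\cup B\rangle$ sits between $\Gamma$ and $G$, hence still has the same nondegenerate solutions as $G$ (squeezing); so by the first item a power of $g$ lies in $\langle\Gamma\cup B\rangle$. One then uses purity of $\Gamma$ together with the maximality/independence of $B$ to see that $\langle\Gamma\cup B\rangle$ is itself pure in $G$ (equivalently $G/\langle\Gamma\cup B\rangle$ is torsion-free), whence $g\in\langle\Gamma\cup B\rangle\subseteq F$. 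In the situations of interest, where $G/\Gamma$ is free, one can shortcut this by taking $B$ to lift a basis of $G/\Gamma$, so that $\langle\Gamma\cup B\rangle=G$ at once and $B$ is multiplicatively, hence (by the first item applied via the contrapositive of the second) algebraically, independent over $\Q(\Gamma)$.

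The main obstacle I anticipate is the combinatorial core of the first item: producing a minimal vanishing subsum that genuinely involves the variable $g$ (so that the resulting exponent difference is nonzero) and then matching the exponent $m$ with the actual degree $d$. The secondary delicate point is the bookkeeping in the last assertion, namely verifying that $\langle\Gamma\cup B\rangle$ remains pure in $G$ (or, in the free case, that a basis of $G/\Gamma$ can be lifted to $G$). Everything else—clearing denominators, the reduction of the second item, and repeatedly observing that any subgroup squeezed between $\Gamma$ and $G$ inherits the matching-nondegenerate-solutions hypothesis—is routine.
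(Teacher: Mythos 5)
Your treatment of the two itemized claims is correct and complete: for the first item, clearing denominators in the minimal polynomial, extracting a minimal vanishing subsum that is not homogeneous in the power of $g$ (your peeling argument for its existence is right), dividing by one term to produce a nondegenerate solution of a $\Q$-linear equation in $G$, hence in $\Gamma$, and then matching the exponent with $d$ via the least $m_0$ with $g^{m_0}\in\Gamma$ is exactly the standard Mann-property argument; the second item is indeed trivial. Note that the paper offers no proof of this lemma at all -- it is imported from G\"unayd{\i}n--van den Dries -- so the only substantive comparison is with that source, whose Lemma 5.12 your argument essentially reproduces.

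The genuine gap is in the ``in particular'' clause. The step you delegate to ``purity of $\Gamma$ together with the maximality/independence of $B$'', namely that $\langle\Gamma\cup B\rangle$ is pure in $G$, does not follow from those two facts: take $K=\overline{\Q(t)}$, $G=\{t^{m/2^k}\mid m\in\Z,\ k\in\N\}$ (with compatible roots), $\Gamma=\{1\}$, $B=\{t\}$; then $\Gamma$ is pure and $B$ is maximal algebraically independent over $\Q$, yet $(t^{1/2})^2\in\langle\Gamma\cup B\rangle=t^{\Z}$ while $t^{1/2}\notin t^{\Z}$. Worse, this pair even satisfies the same-nondegenerate-solutions hypothesis (distinct powers of $t$ are $\Q$-linearly independent, so every nondegenerate solution in $G$ has all coordinates equal to $1$), while $\Q(G)=\Q\bigl(t^{1/2^k}:k\in\N\bigr)$ has transcendence degree $1$ but is not finitely generated over $\Q$, hence is not purely transcendental; so no argument can close your gap at the stated level of generality -- something extra, such as $G/\Gamma$ being free (your ``shortcut'' case, e.g.\ when the quotient is finitely generated), is genuinely needed, and what the paper actually uses downstream is only the weaker consequence that $\Q(G)$ is a regular extension of $\Q(\Gamma)$. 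Finally, even in the free case your parenthetical justification is off: ``multiplicatively independent over $\Gamma$ implies algebraically independent over $\Q(\Gamma)$'' is not the contrapositive of the second item (that contrapositive is again the trivial direction). It follows instead from the first item applied to the intermediate group $\langle\Gamma, b_j\ (j\neq i)\rangle$, which inherits the hypothesis because it is squeezed between $\Gamma$ and $G$: an algebraic dependence among the $b_i$ then yields $b_i^{d}\in\langle\Gamma, b_j\ (j\neq i)\rangle$, i.e.\ a multiplicative dependence over $\Gamma$. That implication is precisely Lemma 5.13 of the cited reference, and with it the free-quotient case of the ``in particular'' goes through as you indicate.
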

 
\par Let $\mathcal{M}=(M,\cdots)$ be a $\mathcal{L}$-structure. Let $A\subset M$ and $\mathcal{L}_A$ be the expansion of $\mathcal{L}$ by a unary predicate that will be interpreted by $A$ in $M$. We denote by $f: X\stackrel{n}{\rightarrow} Y$ a map from $X$ to the subsets of $Y$ of size at most $n$. We say that $A$ is \emph{large} in $M$ if there is a $\mathcal{L}_A$-definable map $f:M^m\stackrel{n}{\rightarrow} M$ such that $f(A)=\bigcup_{x\in A^m} f(x)=M$. We say that $G$ is \emph{small} if it is not large. If $G$ is a subgroup of $\Qp^*$ with finite rank then it is small (because of the respective cardinalities of $\Qp$ and $G$). Let us remark that smallness can be written as a scheme of first-order sentences in the language $\mathcal{L}_A$.

\par Let $G<\Qp^*$ of the type $T^\Z\times D$ where $T\in G$ and $D$ is dense in $1+p^n\Zp$ as in Lemma \ref{subgroup Qp} (case $s=1$). We assume that $G$ has the Mann property and is small (for instance $G$ is finitely generated). We will give an axiomatisation of $Th(\Qp, G)$. We assume that $T\not=1$. The changes that have to be made in the case $T=1$ are obvious. Let $\mathcal{L}_G=\mathcal{L}_{Mac}\cup\{ A, \lambda, \equiv_n (n\in\N), \gamma_g (g\in G)\}$. We define the theory $T_G$ as follow: Let $(K, +,-,\cdot, 0,1, P_n (n\in\N), A, \lambda, \equiv_n (n\in\N), \gamma_g (g\in G))$ be a model of $T_G$, then
\begin{itemize}
	\item $(K, +,-,\cdot, 0,1, P_n (n\in\N))$ is a $p$-adically closed field;
	\item $A$ is a multiplicative subgroup of $K^*$;
	\item Let $k=v_p(T)$. Then, for all $x\in K[k]$, there is $a\in A$ such that $v_K(x)=v_K(a)$. $\lambda:K[k]\rightarrow A:x\longmapsto a$ with $v_K(x)=v_K(a)$ is a morphism groups, $\lambda(x)=1$ for all $x$ with $v(x)=0$ and $\lambda(p^k)=\gamma_T$;
	\item For all $x\in A$, there is a unique $x'\in (A\cap \mathcal{O}_K^\times)=:A_V$ such that $x=\lambda(x)x'$;
	\item $(K, +,-,\cdot, 0,1, P_n (n\in\N), \lambda(A), \gamma_T,1,\lambda)$ is a model of $T_d(T^{\Z})$;
	\item $((A_V, \cdot, ^{-1},1,\gamma_e, \equiv_n (n\in\N)), (VK_{\geq 0}, < ,S,0,\infty),\gamma_g (g\in G), V)$ is elementary equivalent to $(D, (\gamma_g)_{g\in G} V)$ as abelian $p$-valued groups where $V:A_V\rightarrow vK^*_{\geq 0}\cup\{\infty\}:a\longmapsto v_K(a-1)-n$ (surjective map), $\gamma_e$ is a fixed element of $G$ with minimal $V$-valuation and $a\equiv_n b$ iff there is $z\in A$ such that $a=bz^n$;
	\item $A_V$ is a dense subset of $1+p^n\mathcal{O}_K$; 
	\item $A$ satisfies the same Mann axioms as $G$;
	\item $A$ is a small subset of $K$;
	\item $A_V\vDash Diag(G/\Q)$.
\end{itemize} 
\begin{Remark} As the valuation is interpretable in our language, we can write the condition that $A_V\equiv D$ (as valued group). For note that the theory $T_{pV,N}$ as defined in section \ref{valued groups} is interpretable and its expansion by the diagram of $D$ is complete by Theorem \ref{EQ valued groups}.
\end{Remark}

\begin{theorem}\label{completness} $T_G$ is complete.
\end{theorem}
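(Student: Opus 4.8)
The plan is to prove completeness by a back-and-forth argument showing that any two models of $T_G$ are elementarily equivalent, via the standard method: take saturated models $K, L \models T_G$, construct a nonempty family of partial isomorphisms between small substructures that has the back-and-forth property, and conclude. The natural ``small substructures'' to work with are the henselizations $\Q(A(K))^h$ and $\Q(A(L))^h$ equipped with the full $\mathcal{L}_G$-structure (predicate $A$, the map $\lambda$, the constants). The first step is to check that these are well-defined substructures of their ambient models and that, by the Mann axioms together with Lemma \ref{G-vdD Lemma 5.12}, the field $\Q(A(K))$ is determined up to isomorphism over $\Q$ once we know the ``group part'' $A(K)$ abstractly: since $A(K)$ satisfies the same Mann axioms as $G$ and $G \leq A(K)$ is (via the constants $\gamma_g$) a subgroup over which the relevant $S$-unit equations have no new nondegenerate solutions, the extension $\Q(A(K))/\Q(G)$ is purely transcendental, so the isomorphism type of $\Q(A(K))^h$ as a valued field is controlled by the isomorphism type of $A(K)$ as an abstract group-with-valuation-data.

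Next I would decompose $A(K)$ using the axioms: $A(K) = \lambda(A(K)) \times A_V(K)$, where $\lambda(A(K))$ is the discrete cyclic part and $A_V(K) = A(K) \cap \mathcal{O}_K^\times$ is the dense part. For the discrete part, the axioms say $(K, \dots, \lambda(A), \gamma_T, 1, \lambda) \models T_d(T^\Z)$, which is complete and has quantifier elimination by Theorem \ref{discrete case} (and the remark there about isomorphic value groups giving isomorphic henselizations); so that piece contributes no ambiguity beyond the value group, which is itself the value group of a $p$-adically closed field and hence elementarily standard. For the dense part, the axioms force $(A_V(K), V)$ together with the constants $\gamma_g$ and the congruence relations $\equiv_n$ to be elementarily equivalent to $(D, (\gamma_g)_{g\in G}, V)$ as an abelian $p$-valued group expanded by $\mathrm{Diag}(G/\Q)$; by Theorem \ref{EQ valued groups} this theory has quantifier elimination, hence (being the diagram expansion) is complete, so $A_V(K) \equiv A_V(L)$ and — using saturation — there is an isomorphism of these valued groups respecting the constants. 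Finally the density axiom ``$A_V$ is a dense subset of $1 + p^n\mathcal{O}_K$'' ties the abstract valued group $A_V(K)$ to its embedding in $K$: since the value group and residue field of $K$ are those of a $p$-adically closed field and $A_V(K)$ is dense in $1 + p^n\mathcal{O}_K$, the topological closure of $\Q(A_V(K))$ recovers enough of $\mathcal{O}_K$, and together with henselianity (Theorem \ref{discrete case}'s machinery, or a direct embedding lemma for $p$-adically closed fields) one gets that $\Q(A(K))^h \models T_d$-style completeness makes $\Q(A(K))^h$ elementarily embed into $K$.

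Assembling these, the back-and-forth system consists of $\mathcal{L}_G$-isomorphisms between henselized substructures of the form $\Q(B)^h$ where $G \leq B \leq A(K)$ is a pure subgroup that is small and satisfies the Mann/valuation constraints; given such an isomorphism and a new element $a \in K$, one extends first the field-theoretic data (purely transcendental extensions are easy to match), then the $A$-part by using saturation of $L$ together with Proposition \ref{extension isom} (to extend the valued-group isomorphism on the dense part when $a$ is multiplicatively dependent on $A_V$) and Theorem \ref{discrete case} (on the discrete part), and finally one re-henselizes; smallness of $A(K)$ guarantees the substructures stay small so the process terminates in the usual transfinite way. I expect the main obstacle to be the \emph{interaction} between the two parts — showing that the amalgam of ``a discrete cyclic group over which $\Q$ is $p$-adically closed'' with ``a dense $p$-valued group'' sitting inside a single $p$-adically closed field is still rigid enough, i.e. that an element $a \in A(K)$ cannot be algebraically entangled with both $\lambda(A)$ and $A_V$ in a way not already forced by the axioms; this is where the Mann property (via Lemma \ref{G-vdD Lemma 5.12}, giving that algebraic dependence over $\Q(G)$ implies multiplicative dependence, hence is visible to the $\equiv_n$ predicates) does the essential work, and carefully combining it with the density and henselianity axioms is the technical heart of the argument.
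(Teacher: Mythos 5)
Your overall strategy (back-and-forth between saturated models, Mann property via Lemma \ref{G-vdD Lemma 5.12} to control $\Q(A)$, Theorem \ref{discrete case} for the discrete part, Theorem \ref{EQ valued groups} and Proposition \ref{extension isom} for the dense part) is the same as the paper's, but two of your central steps have genuine gaps. First, the family of partial isomorphisms cannot consist only of structures of the form $\Q(B)^h$ with $G\leq B\leq A(K)$ a pure subgroup: a generic element $\alpha\in K$ need not lie in $\Q(A(K))^h$, so such a family is not closed under the extension step you describe. The paper instead works with pairs $(K',A')$ where $K'$ is any small $p$-adically closed subfield, $A'$ is a pure subgroup of $A$, and, crucially, $K'$ and $\Q(A)$ are \emph{free} over $\Q(A')$; it is this freeness condition, maintained at every step via Fact \ref{alg indep}, that makes the Mann machinery control $K'\cap A$ and keeps the maps in the system genuine $\mathcal{L}_G$-isomorphisms. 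Without carrying the field part and the freeness invariant along, the ``rigidity'' you identify as the technical heart has no formal home.

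Second, the sentence ``purely transcendental extensions are easy to match'' conceals exactly the place where the real work and the hypothesis of smallness are used. To extend by $\alpha\notin K'(A)^h$ you must realise the cut of $\alpha$ over $K'(\lambda(K))^h$ --- a set of the same cardinality as $K$, so saturation of $L$ does not apply directly. The paper's Claim \ref{sub-pseudo-Cauchy} fixes this: since $K$ is an immediate extension of $K'(\lambda(K))^h$, the type is determined by a pseudo-Cauchy sequence over the $\lambda$-closure of $K'(\alpha)$, a set of size $|K'|$, and only then can saturation produce $\beta$. Moreover, smallness of $A$ is what guarantees $\beta$ can be chosen \emph{outside} $L'(B)^h$, so that freeness (and hence $A''=A'$, i.e.\ no unwanted new group points) is preserved; your proposal instead assigns smallness the cosmetic role of keeping substructures small, which is not where it is needed and would leave the extension step in case of a transcendental $\alpha$ unjustified. (The same cardinality reduction via Claim \ref{sub-pseudo-Cauchy} is also needed in the dense-group case, where the combined field-type and $p$-valued-group type of $\alpha\in A_V$ must be shown finitely consistent using the density of $B_V$ in $1+p^n\mathcal{O}_L$ together with Proposition \ref{extension isom}; you gesture at this but do not supply the argument.)
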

The proof is similar to Theorem 7.1 in \cite{Gunaydin-vdD} or Theorem 3.3 in \cite{Mariaule}. As in these proofs, we use the notions of free, linearly disjoint and regular extension of fields. We refer to \cite{Lang} for their definitions and elementary properties.

\begin{proof}
Let $(K,A)$ and $(L,B)$ be two saturated models of $T_G$ of same cardinality ( the saturation is at least $|G|$). Let $Sub(K,A)$ be the collection of $\mathcal{L}_{Mac}\cup \{A\}$-substructures $(K',A')$ where
\begin{itemize}
	\item $K'$ is a $p$-adically closed field, $|K'|<|K|$;
	\item $A'$ is a pure subgroup of $A$;
	\item $K'$ and $\Q (A)$ are free over $\Q (A')$.
\end{itemize} 
We define $Sub(L,B)$ similarly. Let $\Gamma$ be the set of  $\mathcal{L}_{Mac}\cup \{A\}$-isomorphisms of $K'\in Sub(K,L)$ and $L'\in Sub(L,B)$ that extends to an isomorphism of valued fields between $K'(\lambda(K))^h$ and $L'(\lambda(L))^h$. Note that this latter isomorphism is also an isomorphism of $\mathcal{L}_G$-structures.
\begin{claim} $\Gamma$ is nonempty
\end{claim}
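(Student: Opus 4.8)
The claim is that $\Gamma$, the set of admissible partial isomorphisms, is nonempty. The natural candidate for a base point is the prime model sitting inside every model of $T_G$: namely the structure generated by the constants $\gamma_g$ for $g\in G$ together with the rational numbers. So the plan is to exhibit a specific $(K',A')\in Sub(K,A)$ and a specific $(L',B')\in Sub(L,B)$ together with an $\mathcal{L}_{Mac}\cup\{A\}$-isomorphism between them that extends suitably.

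First I would set $A' = A(K)_{prime}$, the pure closure inside $A$ of the subgroup generated by $\{\gamma_g(K):g\in G\}$, and $K'$ the henselisation (or relative algebraic closure inside $K$, whichever makes it $p$-adically closed) of $\Q(A')$; symmetrically for $(L',B')$. The point is that the axioms of $T_G$ force $a\mapsto \gamma_a$ to identify $G$ with a subgroup of both $A(K)$ and $B(L)$ compatibly: the Mann axioms guarantee $\gamma\colon G\to A(K)$ is injective (a nontrivial relation $\prod \gamma_{g_i}^{n_i}=1$ would be a degenerate solution forbidden by a Mann axiom, or would contradict that $G$ itself has no such relation), the $p$-valued group axioms via $A_V\vDash Diag(G/\Q)$ pin down the valuation data on the $D$-part, and the $T_d(T^\Z)$-part together with $\lambda(p^k)=\gamma_T$ and $tp(\gamma_T/\Q)=tp(T/\Q)$ pin down the discrete part. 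So there is a canonical map $G\hookrightarrow A(K)$ and $G\hookrightarrow B(L)$, hence a canonical isomorphism $\langle\gamma_g(K):g\rangle \to \langle\gamma_g(L):g\rangle$ of abstract groups respecting all the predicates.

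Next I would check that this extends to an isomorphism $K'\to L'$ of $p$-adically closed fields. The key input is Lemma~\ref{G-vdD Lemma 5.12}: since $G$ has the Mann property and the copies of $G$ inside $A(K)$ and $B(L)$ are pure and realize the same Mann equations, the field extension $\Q(\langle\gamma_g\rangle)$ over $\Q$ is purely transcendental with a transcendence basis indexed the same way on both sides; one then uses quantifier elimination for $p$-adically closed fields (Macintyre) plus $tp(\gamma_T/\Q)=tp(T/\Q)$ to see that $\Q(\langle\gamma_g(K)\rangle)$ and $\Q(\langle\gamma_g(L)\rangle)$ are $\mathcal{L}_{Mac}$-elementarily isomorphic, hence so are their $p$-adic closures $K'$ and $L'$. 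The freeness requirement in the definition of $Sub$ holds because $K'$ is generated over $\Q$ by (images of) elements of $A$, which is exactly the kind of configuration the Mann property controls via Lemma~\ref{G-vdD Lemma 5.12} — the extension $\Q(A)/\Q(A')$ is purely transcendental, hence $K'$ and $\Q(A)$ are free over $\Q(A')=\Q(\langle\gamma_g\rangle)^{pure}$.

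Finally I would verify the extra condition defining $\Gamma$: that the isomorphism extends to the henselisations $K'(\lambda(K))^h \to L'(\lambda(L))^h$ as valued fields. Here $\lambda(K)$ is the discrete $T^\Z$-part, which is governed by $T_d(T^\Z)$; by the remark following Theorem~\ref{discrete case} (isomorphic value groups give an $\mathcal{L}_G$-isomorphism of the henselisations of the $\lambda$-generated subfields), once the value groups match — and they do, both being a $\Z$-group extension determined by $v_p(T)$ — this extension exists and is automatically an $\mathcal{L}_G$-isomorphism. The main obstacle I anticipate is the bookkeeping in the previous step: making precise that the abstract group isomorphism $G\to G$ really does lift to a field isomorphism, i.e. checking that no hidden algebraic relation among the $\gamma_g$ over $\Q$ can differ between $K$ and $L$. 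This is where Lemma~\ref{G-vdD Lemma 5.12} does the real work, converting "multiplicatively independent modulo the pure closure" into "algebraically independent", so that the transcendence degree and the algebraic type of the generating set are forced, not chosen.
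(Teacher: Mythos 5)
Your proposal takes essentially the same route as the paper: the paper's proof simply observes that the canonical copy $(\Q(G)^h,G)$ given by the constants $\gamma_g$ lies in both $Sub(K,A)$ and $Sub(L,B)$ (purity and freeness coming from the axioms and Lemma~\ref{G-vdD Lemma 5.12}), and then extends the identity to an isomorphism $\Q(G)(\lambda(K))^h\cong\Q(G)(\lambda(L))^h$ via Theorem~\ref{discrete case}, exactly as you do. One small caveat: your invocation of Lemma~\ref{G-vdD Lemma 5.12} to assert that $\Q(\langle\gamma_g\rangle)$ is purely transcendental \emph{over $\Q$} is a misreading (the lemma concerns $\Q(G)$ over $\Q(\Gamma)$ for a pure subgroup $\Gamma$, and $\Q(G)/\Q$ need not be purely transcendental), but the fact you actually need at that point --- that the two copies of $\Q(G)$ are isomorphic over $\Q$ via $\gamma_g\mapsto\gamma_g$ --- follows directly from the axiom $A\vDash Diag(G/\Q)$, so the argument stands.
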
 
For let us remark that $(\Q(G)^h, G)$ belongs to the sets $Sub(K,A)$ and $Sub(L,B)$ (this follows from the axioms and Lemma \ref{G-vdD Lemma 5.12}). Now let us remark that $(\Q(G)^h, \lambda(G))$, $(K,\lambda(K))$ and $(L,\lambda(L))$ are all models of  $T_d(\lambda(G))$. As  $(\Q(G)^h, \lambda(G))$ is a substructure of $(K,\lambda(K))$ and $(L,\lambda(L))$ resp., by the proof of Theorem \ref{discrete case}, there is an isomorphism between $\Q(G)(\lambda(K))^h$ and $\Q(G)(\lambda(L))^h$ (extending the indentity).

\begin{claim}\label{sub-pseudo-Cauchy} Let $K'$ $p$-adically closed fields with $|K'|<|K|$. Let $x\in K\setminus K'(\lambda(K))^h$. Then the type $tp(x/K'(\lambda(K))^h)$ is realised in any $|K'|$-saturated expansion of $K'$.
\end{claim}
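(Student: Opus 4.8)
The plan is to identify $F:=K'(\lambda(K))^h$ as a $p$-adically closed field of $p$-rank $1$, of cardinality $|K'|$, with $K'\preceq F$ in the language $\mathcal{L}_{Mac}$; granting this, $x\in K\setminus F$ will be transcendental over $F$, the type $tp(x/F)$ will be a pseudo-Cauchy type (whence the name of the claim), and it can then be realised over $K'$ by a routine saturation argument. Concretely, I would proceed as follows.

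First I would show that $F$ is $p$-adically closed of $p$-rank $1$. By the axioms controlling $\lambda$ it is a group homomorphism $K[k]\to A$, trivial on $\mathcal{O}_K^\times$, with $v_K(\lambda(z))=v_K(z)$ and $\lambda(p^k)=\gamma_T$; hence every element of $\lambda(K)$ agrees, up to a unit with residue in $\Fp$, with a power of $p$. Thus adjoining $\lambda(K)$ to $K'$ leaves the residue field equal to $\Fp=res\,K$ and keeps the value group inside $v_KK$, where, since it contains the least positive element $v_K(p)$, it is again a $\Z$-group; henselisation preserves value group and residue field. By the standard characterisation of $p$-adically closed fields, $F$ is then henselian, $p$-valued of $p$-rank $1$, with a $\Z$-group as value group, i.e. $F\models Th(\Qp)$, and $|F|=|K'|$.

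Next I would check $K'\preceq F$. By Macintyre's quantifier elimination it is enough that the field inclusion $K'\subseteq F$ respect the predicates $P_n$, i.e. $(F^\times)^n\cap(K')^\times=((K')^\times)^n$ for all $n$: since $K'$, being $p$-adically closed, is algebraically maximal (it has no proper immediate algebraic extension), a root in $F$ of $X^n-a$ with $a\in(K')^\times$ generates an immediate algebraic extension of $K'$ inside $F$ and so lies in $K'$, which gives the equality. The same feature ($F$ being $p$-rank $1$, hence relatively algebraically closed in $K$) forces $x$ to be transcendental over $F$, so $tp(x/F)$ is a pseudo-Cauchy type. Finally, let $E$ be any $|K'|$-saturated elementary extension of $K'$; since $K'\preceq F$ and $|F|=|K'|$, saturation yields an elementary $K'$-embedding $\iota\colon F\hookrightarrow E$, and the image of $tp(x/F)$ under $\iota$ is a complete type over $|K'|$ parameters in $E$, hence is realised in $E$. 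Pulling back, $tp(x/F)$ is realised in $E$, as required.

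The step I expect to be the main obstacle is keeping $F=K'(\lambda(K))^h$ within the class of $p$-rank $1$ $p$-adically closed fields of cardinality $|K'|$ while retaining $K'\preceq F$: this rests precisely on the axioms $v_K(\lambda(z))=v_K(z)$ and $res\,K=\Fp$, on the stability of $\Z$-groups and of $p$-rank $1$ under the operations involved, on algebraic maximality of $p$-adically closed fields, and on Macintyre's theorem. Once this is in place the saturation argument --- and the verification that the resulting type is of pseudo-Cauchy character --- is formal.
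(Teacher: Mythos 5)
There is a genuine gap, and it is exactly at the point you flag as ``the main obstacle'' and then dispose of with a cardinality claim that is false. You assert that $F:=K'(\lambda(K))^h$ has cardinality $|K'|$. It does not: $\lambda$ satisfies $v_K(\lambda(z))=v_K(z)$ and is trivial on elements of valuation $0$, so the elements of $\lambda(K)$ are in bijection with $k\,v_K(K)$, and in the (saturated) model $K$ this set has cardinality $|K|>|K'|$. Hence $|F|=|K|$, and your final step --- elementarily embedding $F$ into a $|K'|$-saturated extension of $K'$ and realising $tp(x/F)$ there ``by saturation'' --- collapses: a $|K'|$-saturated structure only handles types over parameter sets of size $<|K'|$ (or $\leq|K'|$ with the usual chain argument), not over a field of size $|K|$. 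The preliminary observations you make (that $F$ is $p$-adically closed with value group a $\Z$-group and residue field $\Fp$, that $K'\preceq F$, and that $K$ is an immediate extension of $F$ so that $tp(x/F)$ is pseudo-Cauchy in character) are fine and agree with the paper's starting point, but they are not where the work is.

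The actual content of the claim is that the pseudo-Cauchy data of $x$ over the \emph{large} field $F$ is already determined over a subfield of size $|K'|$. The paper proves this by forming the $\lambda$-closure ${K'}_{\lambda}^{\omega}$ of $K'(x)$ (iteratively adjoining $\lambda$ of the fields already constructed), a field of size $|K'|$, and then showing by induction on the number of monomials that $\{v(x-b):\ b\in {K'}_{\lambda}^{\omega}\}$ is cofinal in $\{v(x-f(\overline{a},\overline{g})):\ \overline{a}\subset K',\ \overline{g}\subset\lambda(K)\}$; the induction uses the multiplicative structure of $\lambda(K)$ (two monomials $f_I\overline{g}^I$, $f_J\overline{g}^J$ of equal valuation satisfy $\lambda(f_I)\overline{g}^I=\lambda(f_J)\overline{g}^J$ and can be merged into one monomial with coefficient in the next $\lambda$-closure, and similarly when $v(x-f_0)=v(f_I\overline{g}^I)$). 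Only after this reduction does saturation with respect to $|K'|$-many parameters apply. None of this appears in your proposal, and without some substitute for it the statement as claimed is not proved.
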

\par For as $K$ is an immediate expansion of $K'(\lambda(K))^h$, the type of $x$ over $K'(\lambda(K))^h$ is determined by a pseudo-Cauchy sequences over $K'(\lambda(K))$. So, it is sufficient to find a sequence with same pseudo-limit and indexed over a set of size at most $|K'|$.
\par Let ${K'}_{\lambda}^{(1)}:=K'(\lambda(K'(x)))$ and by induction ${K'}_{\lambda}^{(n+1)}:={K'}_{\lambda}^{(n)}(\lambda({K'}_{\lambda}^{(n)}))$. Set ${K'}_{\lambda}^{\omega}:=\bigcup_n {K'}_{\lambda}^{(n)}$. Let us remark that $|{K'}_{\lambda}^{\omega}|=|K|$.
 \par Now it is sufficient to prove that $\{v(x-b):b\in {K'}_{\lambda}^{\omega}\}$ is cofinal in $\{v(x-f(\overline{a}, \overline{g})): \overline{a}\subset K', \overline{g}\subset \lambda(K)\}$ for all $f\in \Z[\overline{X},\overline{y}]$. Let $f(\overline{g}):=f_0+\sum f_I\overline{g}^I$ with $f_I\in {K'}_{\lambda}^{(n)}$ (in particular, any polynomial $f(\overline{a},\overline{g})$ in the last set has this form with $n=1$). We prove by induction on the number $d$ of nonzero $f_I$ that either $v(x-f_0-\sum f_I\overline{g}^I)\leq v(x-b)$ for some $b\in  {K'}_{\lambda}^{(n+1)}$ or there are $d-1$ non zero $f'_I\in {K'}_{\lambda}^{(n+1)}$ such that $f_0+\sum f_I\overline{g}^I=f'_0+\sum f'_I\overline{g}^I$. This proves our claim by induction on $d$ the degree of $f$.
\par  If $f(\overline{g})$ has only one monomial $f_I\overline{g}^I$ then $x-f(\overline{g})=x-f_0-f_I\overline{g}^I$. Then either $v(x-f_0)\not=v(b_I\overline{g}^I)$ so $v(x-f_0-f_I\overline{g}^I)=\min\{v(x-f_0), v(f_I\overline{g}^I)\}\leq v(x-f_0)$. In that case, take $b=f_0$. Or $v(x-f_0)=v(f_I\overline{g}^I)$ i.e. $\lambda(x-f_0)=\lambda(f_I)\overline{g}^I$. Therefore, $x-f_0-f_I\overline{g}^I= x-f_0-f_I \lambda(f_I)^{-1}\lambda(x-f_0)$. As $f_0+f_I \lambda(f_I)^{-1}\lambda(x-f_0)\in {K'}_{\lambda}^{(n+1)}$ we are done: take $b=f_0+f_I \lambda(f_I)^{-1}\lambda(x-f_0)$.
\par For the inductive step, let $f(\overline{g})=f_0+\sum_{I\not=0} f_I\overline{g}^I$ where $f_I\in {K'}_{\lambda}^{(n)}$ with $d$ nonzero factors $f_I\overline{g}^I$. First let us remark that if $v(f_I\overline{g}^I)\not=v(f_J\overline{g}^J)$ and $v(x-f_0)\not=v(f_I\overline{g}^I)$ for all $I\not=J$ with $f_I,f_J$ nonzero then $v(x-f_0+\sum_{I\not=0} f_I\overline{g}^I)=\min\{v(x-f_0),v(f_I\overline{g}^I)\}\leq v(x-f_0)$. In that case, we take $b=f_0$. Otherwise, if there is $I\not= J$ such that $f_I,f_J\not=0$ and $v(f_I\overline{g}^I)=v(f_J\overline{g}^J)$ i.e. $\lambda(f_I)\overline{g}^I=\lambda(f_J)\overline{g}^J$. Set $f'_I=f_I+f_J\lambda(f_I)\lambda(f_J^{-1})$, $f'_J=0$ and $f'_K=f_K$ for $K\not=I,J$. Then, $f_0+\sum f_I\overline{g}^I=f'_0+\sum f'_J\overline{g}^J$, $f'_I\in {K'}_{\lambda}^{(n+1)}$ for all $I$ and there are $d-1$ nonzero $f'_I$. Finally, if $v(x-f_0)=v(f_I\overline{g}^I)$, we proceed similarly substituting $f_0$ by $f_0+f_I\lambda(x-f_0)\lambda(f_I^{-1})$. This completes the proof of Claim \ref{sub-pseudo-Cauchy}.

\par We will prove that $\Gamma$ forms a back-and-forth system. So, let $\iota: (K',A')\rightarrow (L',B')$ in $\Gamma$ and $\widetilde{\iota}$ its extension to $K'(\lambda(A))$. Let $\alpha\in K\setminus K'$. We construct an extension of $\iota$ that contains $\alpha$ in its domain and compatible with $\widetilde{\iota}$. There are different cases according to the choice of $\alpha$.
	\par 1. If $\alpha\in A$ then $\alpha=\lambda(\alpha)\alpha'$ for some unique $\alpha'\in A\cap \mathcal{O}_K$. So, we may assume that
		\par (a) $\alpha=\lambda(\alpha)$. As $\alpha\notin K'$, $v_K(\alpha)\notin v(K')$. Let $l\in v_K(L)$ such that it realises the same cut as $v_K(\alpha)$ over $v_K(K')$ and the same congruence classes in $v_K(L)$ ($l$ exists by saturation and quantifier elimination for $\Z$-groups). Then, as $v_K(\alpha)$ is $k$-divisible, $l$ is $k$-divisible. Let $x\in L$ such that $v_L(x)=l$. Take $\beta=\lambda(x)$. Let $K''=K'(\alpha)^h$, $A''=A\cap K''$, $L''=L'(\beta)^h$, $B''=B\cap L''$. $\iota$ extends to an isomorphism between $(K'',A'')$ and $(L'',B'')$ with $\alpha\longmapsto \beta$ (that is the reduct of $\widetilde{\iota}$). Also, by Lemma \ref{G-vdD Lemma 5.12} and properties of $p$-adically closed fields, $(K'',A'')\in Sub(K,A)$ and $(L'',B'')\in Sub(L,B)$.  This follows from the following fact:
		\begin{Fact}\label{alg indep} For all $t_1,\cdots , t_n\in K'$ algebraically independent over $A$, $$acl(t_1,\cdots, t_n,\alpha, A')\cap A=A'\langle\alpha\rangle$$ (here $acl$ is taken in the language of rings, $A'\langle\alpha\rangle$ is the pure closure of $A'(\alpha)$ in $A$ as defined in Definition \ref{pure subgroup}).
		\end{Fact}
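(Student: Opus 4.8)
The plan is to prove the two inclusions of the asserted equality separately; the inclusion $A'\langle\alpha\rangle\subseteq acl(t_1,\dots,t_n,\alpha,A')\cap A$ is the routine one. If $x\in A'\langle\alpha\rangle$ then $x^m=\alpha^l y$ for some $y\in A'$ and $m,l\in\Z$ with $m\neq 0$, so $x$ is a root of a nonzero polynomial with coefficients in $\Q(\alpha,A')\subseteq\Q(t_1,\dots,t_n,\alpha,A')$; hence $x\in acl(t_1,\dots,t_n,\alpha,A')$, and trivially $A'\langle\alpha\rangle\subseteq A$.

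For the reverse inclusion, fix $x\in A\cap acl(t_1,\dots,t_n,\alpha,A')$ and put $E:=\Q(\alpha,A')$, so that $x$ is algebraic over $E(t_1,\dots,t_n)$, while $E\subseteq\Q(A)$ and, since $x\in A$, also $E(x)\subseteq\Q(A)$. The first step is to discard the parameters $t_i$. Because $(t_1,\dots,t_n)$ is algebraically independent over $A$ by hypothesis, it is algebraically independent over the subfield $E(x)$ of $\Q(A)$; so, in the tower $E\subseteq E(x)\subseteq E(x)(t_1,\dots,t_n)=E(t_1,\dots,t_n,x)$, using also that $x$ is algebraic over $E(t_1,\dots,t_n)$ and that the $t_i$ are algebraically independent over $E$, we get
$$n=\mathrm{tr.deg}\big(E(x)(t_1,\dots,t_n)/E(x)\big)=\mathrm{tr.deg}\big(E(t_1,\dots,t_n,x)/E\big)-\mathrm{tr.deg}\big(E(x)/E\big)=n-\mathrm{tr.deg}\big(E(x)/E\big),$$
whence $\mathrm{tr.deg}(E(x)/E)=0$, i.e. $x$ is algebraic over $E=\Q(\alpha,A')$.

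For the second step I would feed this into the Mann machinery. Since $\alpha\in A'\langle\alpha\rangle$ and $A'\subseteq A'\langle\alpha\rangle$, the element $x$ is algebraic over $\Q(A'\langle\alpha\rangle)$, and $A'\langle\alpha\rangle$ is a pure subgroup of $A$ by construction (Definition \ref{pure subgroup}). As $A$ has the Mann property, Lemma \ref{G-vdD Lemma 5.12} applies with $\Gamma=A'\langle\alpha\rangle$: an element of $A$ algebraic of degree $d$ over $\Q(A'\langle\alpha\rangle)$ lies, after raising to the $d$-th power, in $A'\langle\alpha\rangle$, and pureness then upgrades this to $x\in A'\langle\alpha\rangle$. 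This proves $acl(t_1,\dots,t_n,\alpha,A')\cap A\subseteq A'\langle\alpha\rangle$ and finishes the argument.

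I expect the first step to be the point demanding the most care: one must be precise about which base field the $t_i$ remain algebraically independent over, and it is essential that the hypothesis places them algebraically independent over the whole group $A$ rather than merely over $A'$. This is exactly why, in the application to the proof of Theorem \ref{completness}, the $t_i$ are chosen from a transcendence basis of $K'$ over $\Q(A')$ and the freeness of $K'$ and $\Q(A)$ over $\Q(A')$ is invoked, so as to promote their independence over $\Q(A')$ to independence over $A$. The only remaining thing to verify is that the hypotheses of Lemma \ref{G-vdD Lemma 5.12} really do apply to $\Gamma=A'\langle\alpha\rangle$, which is clear since it is a pure subgroup of $A$.
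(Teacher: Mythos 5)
Your route is essentially the one the paper intends (it only sketches it): a transcendence-degree/exchange argument to discard the $t_i$, followed by the Mann property in the form of Lemma \ref{G-vdD Lemma 5.12} and closure of $A'\langle\alpha\rangle$ under taking roots inside $A$. The easy inclusion and the tower computation showing $x$ is algebraic over $\Q(\alpha,A')$ are correct, and the hypothesis that the $t_i$ are independent over all of $A$ (not just $A'$) is indeed used exactly where you use it.

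The gap is in the very point you single out as ``the only remaining thing to verify''. You justify applying Lemma \ref{G-vdD Lemma 5.12} to $\Gamma=A'\langle\alpha\rangle$ by saying this is ``clear since it is a pure subgroup of $A$''. Purity is \emph{not} the hypothesis of that lemma: the hypothesis is that every equation $a_1x_1+\cdots+a_nx_n=1$ with $a_i\in\Q^*$ has the same nondegenerate solutions in $\Gamma$ as in the ambient group, and purity alone implies neither this nor the conclusion you use. Concretely, $\Gamma=6^{\Z}$ is a pure subgroup of $2^{\Z}3^{\Z}\le\Q^*$ (a group with the Mann property), and $2$ is algebraic of degree $1$ over $\Q(\Gamma)$ yet $2\notin\Gamma$; correspondingly the equation $\tfrac12 x_1=1$ has the nondegenerate solution $2$ in $2^{\Z}3^{\Z}$ but none in $\Gamma$. (In the statement of Lemma \ref{G-vdD Lemma 5.12}, purity only enters the final ``in particular'' clause, on top of the same-solutions hypothesis, not instead of it.) The correct verification in the context of Theorem \ref{completness} is: the substructures $A'$ in the back-and-forth contain $G$ (the system is built over $(\Q(G)^h,G)$ and the constants $\gamma_g$), and the Mann axioms of $T_G$ say that all nondegenerate solutions in $A$ of such an equation are among finitely many tuples from $G$, hence lie in $A'\subseteq A'\langle\alpha\rangle$; conversely nondegenerate solutions in $A'\langle\alpha\rangle$ are nondegenerate solutions in $A$. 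With that substitution your argument goes through; note also that the final upgrade from $x^d\in A'\langle\alpha\rangle$ to $x\in A'\langle\alpha\rangle$ is immediate from Definition \ref{pure subgroup} itself (if $(x^d)^n=\alpha^m y$ with $y\in A'$ then $x^{dn}=\alpha^m y$), rather than from purity in the abelian-group sense, which would in addition require dealing with torsion.
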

		This is a consequence of Mann property (in particular, Lemma \ref{G-vdD Lemma 5.12}) and exchange property for $acl$, see \cite{Belegradek-Zilber} proof of Lemma 4.2 for instance.
		In particular, for all $t\in K''$, $t\in A$ iff $t\in A'\langle\alpha\rangle$ (so, $A''=A'\langle\alpha\rangle$)

		\par (b) $\alpha\in (A\cap \mathcal{O}_K)=:A_V$.  We want to find $\beta$ in $B_V$ that realises the same type as $\alpha$ over $A'_V$ (type in the theory of $p$-valued group) and the same type as $\alpha$ over $K'(\lambda(K))^h$ (by Claim \ref{sub-pseudo-Cauchy}, this latter is determined by the set of formulas $v_K(x-a)\Square{}{} v_K(t)$ with $a,t$ in a subfield of $K'(\lambda(K))$ of size $|K'|$). In that case, $A'_V\langle\alpha\rangle\cong B'_V\langle \beta\rangle$ as valued field and $K'(\lambda(A))(\alpha)^h\cong L'(\lambda(B))(\beta)^h$ as valued fields. Let us prove that the union of these type is finitely consistant i.e. that given finitely many conditions $V(x-g)\Square{}{}\gamma$, $x\equiv_n g$ with $g\in A'$ and $\gamma\in VA'$ and a ball $B$ in $K'(\lambda(K'(\alpha)))$ the image under $\widetilde{\iota}$ of these formulas is realised. Without loss of generality, we may assume that $B$ is a subset of the set of $x\in K'$ such that $V(x-g)\Square{}{}\gamma$. By Proposition \ref{extension isom} and its proof, there is $\beta\in B_V$ that realises the formulas $V(x-\iota(g))\Square{}{}\widetilde{\iota}\gamma$ and $x\equiv_n \iota(g)$. By density of $B_V$ in $1+p^n\mathcal{O}_K$, we may even assume that such realisation is in $\widetilde{\iota}(B)$. So the image by $\widetilde{\iota}$ of the type is consistant. Let $\beta$ be one of its realisation in $L$. We conclude like in case 1.(a) setting $K''=K'(\alpha)^h$, $A''=A\cap K''$, $L''=L'(\beta)^h$, $B''=B\cap L''$.
	\par 2. If $\alpha\in K'(\alpha_1,\cdots, \alpha_n)^h$ where $\alpha_i\in A$, then apply case 1. $n$ times.
	\par 3. If $\alpha\notin K'(A)^h$.
	Consider the cut of $\alpha$ over $K'(\lambda(K))$. By saturation, Claim \ref{sub-pseudo-Cauchy} and smallness of $A$,  we can find $\beta\in L\setminus L'(B)^h$ which realises the corresponding cut (under $\widetilde{\iota}$). Then, $K'(\lambda(K))(\alpha)^h\cong L'(\lambda(L))(\beta)^h$. Take $K''=K'(\alpha)^h$ and $L''=L'(\beta)^h$. Note that as $\Q(A)$ is a regular extension of $\Q(A')$, $K'$ and $\Q(A)$ are linearly disjoint over $\Q(A')$. By linear disjointness, $\iota$ extends to an isomorphism between $(K'',A')$ and $(L'',B')$ that sends $\alpha$ on $\beta$. The freeness of $K''$ and $\Q(A)$ over $\Q(A')$ follows from the assumption that $\alpha\notin K'(A)^h$.
\end{proof}

Finally, let us return to the general case: let $G$ be a subgroup of $\Qp$. In Lemma \ref{subgroup Qp}, we have seen that $G=T^{\Z}\times D$ where $D^s$ is dense in $1+p^n\Zp$. Let us remark that any $t\in D$ is uniquely determined by the pair $(t^s, res\ t)$. Indeed, for all $h\in D^s$ and $\xi\in \Fp^*$, there is by Hensel's Lemma at most one $t\in D$ such that $t^s=h$ and $res\ t=\xi$.
Furthermore, if $t'\in D$ is such that $t'^s=h$ then $t'=t\omega$ for some $\omega\in G_{tor}$.
\par Let us return to the back-and-forth system of Theorem \ref{completness}. The general case does not change neither case 1. (a), case 2. nor case 3. It remains to deal with case 1.(b). For let us remark that given $\alpha\in A_V$, we can find $\beta^s\in B_V^s$ such that ${A'}_V^s\langle\alpha^s\rangle\cong {B'}_V^s\langle\beta^s\rangle$ (where the pure closure is taken in $A^s$, $B^s$ resp.) and $K'^s(\alpha^s)\cong L'^s(\beta^s)$ as valued fields (this latter isomorphism is compatible with $\widetilde{\iota}$). This follow from the proof of case 1.(b). It remains to see if this isomorphism is an isomorphism of $\mathcal{L}_G$-structures. For it is sufficient to prove that for all $x\in K$, $x\in {A'}_V\langle\alpha\rangle$ iff $\iota(x)\in {B'}_V\langle\beta\rangle$. For by the above remark, $x\in {A'}_V\langle\alpha\rangle$ is determined by the properties $x^s\in {A'}_V^s\langle\alpha^s\rangle$ and $res\ x=\xi$. If $x\in {A'}_V^s\langle\alpha^s\rangle$ say $x^n=a\alpha^m$ for some $a\in {A'}_V^s$ and $n,m\in \N$. Then $\alpha$ satisfies $\alpha^m\in a^{-1}.D_n(\xi)$ where $D_n(\xi):=\{t\in D^s:\ t\equiv_n0,\ \exists y\in D y^s=t\mbox{ and } res\ y=\xi\}$ (where the congruence is in the group $D^s$). If $n=1$, we set $D_n(\xi)=:D(\xi)$. Therefore, for $\iota$ to be an isomorphism of $\mathcal{L}_G$-structures, it is necessary that $\beta^m\in \iota(a^{-1})D_n(\xi)$ (where $D_n$ is now interpreted in $L$). This will be guaranteed by a scheme of axioms expressing the density of $D_n(\xi)$ and adding this property in the type of $\beta^s$.                       
\par We will see in the next lemma that such set $a^{-1}D_n(\xi)$ is dense in a finite union of ball in $1+p^n\Zp$ (possibly empty). Futhermore, we can add the information $\alpha^s\in a^{-1}D_n(\xi)$ to the type defined in case 1.(b) and still guarantee that this type is finitely satisfied in $L$. In that case we are done.

\begin{lemma}\label{density of roots} If $D(\xi)$ is nonempty, then it is dense in $1+p^n\Zp$. Futhermore, for all $x_1,\cdots, x_m\in 1+p^n\Zp$, for all $k_1,\cdots, k_m,n_1,\cdots, n_m\in \N$ if there is $y\in D$ such that $y^{k_i}\in x_i^{-1}D_{n_i}(\xi_i)$ for all $i$, then the set of $z\in D$ such that $z^{k_i}\in x_i^{-1}D_{n_i}(\xi_i)$ for all $i$ is dense in a finite union of (multiplicative) cosets of $1+p^r\Zp$ in $1+p^n\Zp$ for some $r\geq n$ where $r$ is independent of $x_i$. If $x_i\in D^s$ the cosets depend only on the congruence classes of $x_i$ modulo $s+\sum n_i$ (in $D^s$).
\end{lemma}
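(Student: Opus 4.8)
The plan is to transfer everything to statements about dense subgroups of $\Zp$ via $\log_p$, exploiting repeatedly that $p\nmid s$. Put $D_0:=D\cap(1+p\Zp)$. I will use two elementary facts: (i) for $p$ odd and $u\in 1+p\Zp$, $v_p(u^k-1)=v_p(k)+v_p(u-1)$; (ii) $u\mapsto u^s$ is a homeomorphism of $1+p\Zp$ fixing each $1+p^j\Zp$ (as $s$ is a $p$-adic unit), so $(D_0)^s$ has the same closure as $D_0$. From (i): $D^s$, being dense in $1+p^n\Zp$, is contained in $1+p^n\Zp$; and for $u\in D_0$ one has $u^s\in D^s\subseteq 1+p^n\Zp$, so $v_p(u-1)=v_p(u^s-1)\ge n$, i.e.\ $D_0\subseteq 1+p^n\Zp$; since also $D^s\subseteq D_0$, this gives $\overline{D_0}=1+p^n\Zp$. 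For the first assertion: if $D(\xi)\ne\emptyset$, pick $y_0\in D$ with $res\,y_0=\xi$; then $D(\xi)=y_0^s(D_0)^s$ is a translate, by $y_0^s\in D^s\subseteq 1+p^n\Zp$, of the subgroup $(D_0)^s$, which is dense in $1+p^n\Zp$ by (ii); hence $D(\xi)$ is dense in $1+p^n\Zp$.

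For the second assertion, assume each $k_i,n_i\ge1$. The crux is that the solution set is a coset of an $x_i$-independent subgroup of $D$. First, $D_{n_i}(\xi_i)=(D^s)^{n_i}\cap D(\xi_i)$, and $D(\xi_i)$ is a coset of $(D_0)^s$, so whenever $D_{n_i}(\xi_i)\ne\emptyset$ it is a coset of the subgroup $E_i:=(D^s)^{n_i}\cap(D_0)^s$, which depends only on $n_i$ (not on $\xi_i$ or $x_i$). Let $z_0$ be the $y$ given by the hypothesis. Since $x_i^{-1}D_{n_i}(\xi_i)$ is a coset of $E_i$ containing $z_0^{k_i}$, for $u\in D$ we have $(z_0u)^{k_i}\in x_i^{-1}D_{n_i}(\xi_i)$ iff $u^{k_i}\in E_i$; so the set $S$ in the statement equals $z_0\cdot U$ with $U:=\{u\in D:\ u^{k_i}\in E_i\text{ for all }i\}$, a subgroup of $D$ independent of the $x_i$, and $\overline S=z_0\overline U$.

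It remains to describe $\overline U$. As $1+p\Zp$ is open in $\Zp^\times$, $\overline U\cap(1+p\Zp)=\overline{U\cap D_0}$, and by the first paragraph $1\to 1+p^n\Zp\to\overline D\to res(D)\to1$ is exact; hence $\overline U\cap(1+p^n\Zp)=\overline{U\cap D_0}$ is a closed subgroup of $1+p^n\Zp$, so equals $1+p^r\Zp$ for some $r\ge n$. It is nontrivial: for $N:=lcm(s,n_1,\dots,n_m)$ one has $(D^s)^N\subseteq(D^s)^{n_i}$ and, since $s\mid N$, $(D^s)^N=\big((D^s)^{N/s}\big)^s\subseteq(D_0)^s$, so $(D^s)^N\subseteq E_i$, and likewise $(D^s)^{Nk_i}\subseteq E_i$, whence $(D^s)^N\subseteq U\cap D_0$. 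Since $1+p^r\Zp$ is then open in the compact group $\overline U$, we get $[\overline U:1+p^r\Zp]<\infty$ (indeed $\overline U/(1+p^r\Zp)\cong res(\overline U)\le res(D)$), so $\overline U$ — hence $\overline S=z_0\overline U$ — is a finite union of cosets of $1+p^r\Zp$, with $r\ge n$ and, like $U$, visibly independent of the $x_i$. Lastly, if $x_i,x_i'\in D^s$ with $x_i(x_i')^{-1}\in(D^s)^N$ for all $i$, then $x_i'/x_i\in E_i$, so $x_i^{-1}D_{n_i}(\xi_i)=(x_i')^{-1}D_{n_i}(\xi_i)$ for every $i$ (a coset of $E_i$ is unchanged under translation by an element of $E_i$); hence $S$ itself is unchanged, and in particular the list of cosets depends only on the class of $(x_i)$ modulo $(D^s)^N$.

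The facts (i), (ii) and the ensuing group-theoretic bookkeeping are routine; the step requiring real care — and the one I would spell out most fully — is the identification $S=z_0U$ with $E_i=(D^s)^{n_i}\cap(D_0)^s$ as common stabiliser, since this is exactly what forces $r$ and the finite list of cosets to depend only on $s,n,(n_i),(k_i)$ and not on the $x_i$, and what yields the final clause via $(D^s)^N\subseteq E_i$. (The phrase ``cosets of $1+p^r\Zp$ in $1+p^n\Zp$'' is to be read up to the translation by $z_0$, which preserves the residue class.)
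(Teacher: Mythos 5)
Your argument is correct, and its skeleton is the paper's: translate by the given solution, observe that the residual condition no longer involves the $x_i$, and control closures by a dense subgroup of powers of $D^s$. The execution, however, is genuinely different and in places sharper. The paper only exhibits dense subsets: it checks $yD^{s^2}\subseteq D(\xi)$ and, for each point $z$ of its set $X_0=\{z\in D^s:\ z^{k_i}\in D_{n_i}(1)\}$, that $zD^{s^2\prod n_i}\subseteq X_0$, then computes with $\log_p$ that these power subgroups are dense in $1+p^n\Zp$, resp.\ $1+p^r\Zp$ with the explicit $r=n+v_p(s^2\prod n_i)$. You instead identify the sets exactly: $D(\xi)=y_0^s(D_0)^s$ is a full coset of $(D_0)^s$, and $D_{n_i}(\xi_i)$ is a coset of $E_i=(D^s)^{n_i}\cap(D_0)^s$, which is precisely the paper's $D_{n_i}(1)$; hence the solution set is literally $z_0U$ for the $x_i$-independent subgroup $U$, and its closure is pinned down by the classification of closed subgroups of $1+p^n\Zp\cong\Zp$ plus compactness, with nontriviality witnessed by $(D^s)^{N}\subseteq U$, $N=\mathrm{lcm}(s,n_1,\dots,n_m)$. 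This buys an exact description of the closure (not merely a dense inclusion), and a stronger form of the last clause (invariance of the solution set under translating the $x_i$ by $(D^s)^{N}$, which implies the paper's version with modulus $s\prod n_j$), at the price of not producing $r$ explicitly --- harmless, since only some $x_i$-independent $r\ge n$ is required. Two points to flag, neither a gap on your side: your restriction to $k_i,n_i\ge 1$ is the intended reading (with $n_i=0$ the conclusion can fail), and your caveat about where the cosets live reflects an inconsistency internal to the paper, whose statement says $z\in D$ while its proof and later application take the solutions in $D^s$, where everything does lie in $1+p^n\Zp$; your argument specializes verbatim to that case (and there $\overline{U}\subseteq 1+p^n\Zp$, so the closure is even a single coset of $1+p^r\Zp$).
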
 
\begin{proof}
Let us proof the first part of the lemma. Let $y\in D(\xi)$ i.e. $y=x^s$ for some $x\in D$ with $res\ x=\xi$. First let us remark that $yD^{s^2}\subset D(\xi)$. For if $z\in D^{s^2}$ then $z=t^s$ for some $t\in D^s$. So $yz=(xt)^s$ and  $res\ xt=res\ x\cdot res\ t=res\ \xi$ (as $res\ t=1$ since $t\in D^s\subset 1+p^n\Zp$) i.e. $yz\in D(\xi)$. Now, it remains to see that $yD^{s^2}$ is a dense subset of $1+p^n\Zp$. That is $\log_p(yD^{s^2})=\log_p(y)+s\log D^s$ is a dense subset of $p^n\Zp$. This is the case as $v_p(s)=0$, $v_p(\log_p(y))\geq 0$ and $\log_p D^s$ is dense in $p^n\Zp$.

\par The general case is similar. Let $X:= \{y\in D^s:\ \wedge_i y^{k_i}\in x_i^{-1}D_{n_i}(\xi_i)\}$. We assume that $X$ is nonempty. Let $y_0\in X$. For all $i$, $y_0^{k_i}=x_{i}^{-1}(t_{0,i}^s)^{n_i}$ with $res\ t_{0,i}=\xi_i$. Let $X_0:=\{z\in D^s:\ z^{k_i}\in D_{n_i}(1)\}$. Then $X=y_0X_0$. For let $y\in X$ i.e. $y^{k_i}=x_{i}^{-1}(t_{i}^s)^{n_i}$ with $res\ t_{i}=\xi_i$. Then $(y/y_0)^{k_i}=((t_i/t_{0,i})^s))^{n_i}$ with $res\ (t_i/t_{0,i})=1$ i.e. $y/y_0\in X_0$. Conversely, let $z\in X_0$ i.e. $z^{k_i}=(t_i^s)^{n_i}$ with $res\ t_i=1$. Then $(y_0z)^{k_i}=x_i^{-1}((t_{0,i}t_i)^s)^{n_i}$ with $res\ (t_{0,i}t_i)=\xi_i$ i.e. $y_0z\in X$.
\par Let us prove that $X_0$ is a dense subset of a finite union of (multiplicative) cosets of $1+p^r\Zp$ in $1+p^n\Zp$ for some $r\geq n$. Let $z\in X_0$ i.e. for all $i$, $z^{k_i}=(t_i^s)^{n_i}$ where $res\ t_i=1$. Then $zD^{s^2\prod n_i}\subset X_0$. For if $y=t^{s^2\prod n_i}$ then $(zy)^{k_i}= ((t_it^{sk_i\prod_{j\not=i}n_j})^s)^{n_i}$ and $res\ (t_it^{sk_i\prod_{j\not=i}n_j})= res\ t_i\cdot res\ (t^{k_i\prod_{j\not=i}n_j})^s=1$ as $(t^{k_i\prod_{j\not=i}n_j})^s\in D^s\subset 1+p^n\Zp$. Let us remind that $D^{s^2\prod n_i}$ is a dense subset of $1+p^r\Zp$ where $r=n+v_p(s^2\prod n_i)$. Then, $X_0$ is dense in the coset $z(1+p^r\Zp)$. If there is no $y\in X_0$ such that $y\notin z(1+p^r\Zp)$, then $X_0$ is a dense subset of $z(1+p^r\Zp)$. Otherwise, let $y\in X_0$, $y\notin z(1+p^r\Zp)$. Then by the same argument as before $X_0$ is dense in the coset $y(1+p^r\Zp)$. As $(1+p^r\Zp)$ has finitely many coset in $1+p^n\Zp$ and $X_0\subset (1+p^n\Zp)$, we are done.
\par Now, we have that $X=y_0X_0$ is a dense subset of finitely many cosets $y_0c(1+p^r\Zp)$ where $c, r$ does not depends on $x_i$. It remains to prove that these cosets only depends on the on the congruence classes of $x_i$ modulo $s+\sum n_j$. For it is sufficient to prove that for all $z_i\in D^s$ such that $z_i\equiv x_i\mod (s+\sum n_j)D^s$ (i.e. $z_i=x_ib_i^{s\prod n_j}$ (for some $b_i\in D^s)$), if $X(\overline{z})=\{y\in D^s:\ \wedge_i y^{k_i}\in z_i^{-1}D_{n_i}(\xi_i)\}$ then $X=X(\overline{z})$. Let $w\in X(\overline{z})$. Then $w^{k_i}=z_i^{-1}(v_i^s)^{n_i}$ with $res\ v_i=\xi_i$. So $w^{k_i}=(x_ib_i^{s\prod n_j})^{-1}(v_i^s)^{n_i}=(x_i)^{-1}((v_ib_i^{-\prod_{j\not=i} n_j})^s)^{n_i}$ where $res\ v_ib_i^{-\prod_{j\not=i}n_j}=res\ v_i=\xi_i$ as $b_i^{-\prod_{j\not=i}n_j}\in D^s\subset 1+p^n\Zp$. This proves that $w\in X$. By symmetry, $X=X(\overline{z})$.
\end{proof}

\par Let $(K,A)$ be any model of our theory. Let $A_V^s:=(A\cap \mathcal{O}_K^\times)^s$. Let $x_i\in A_V^s$ and $X:= \{y\in  A_V^s:\ \wedge_i y^{k_i}\in x_i^{-1}D_{n_i}(\xi_i)\}$. Then by the above lemma, $X$ is either empty or dense in a finite union of $c(1+p^r\mathcal{O}_K)$ where $r$ is independent of $x_i$ and $c$ only depends on the congruence classes of $x_i$ modulo $s+\sum n_j$.
If all indices $[n]D^s$ are finite, then the congruences classes of $x_i$ modulo $s+\sum n_j$ have representatives in $G$ (or in any pure subgroup of $A$). In that case, the \emph{dense axioms of $D_n(\xi)$} is an scheme of axioms that expresses in our language when $X$ is empty or in which coset of $(1+p^r\mathcal{O}_K)$ it is dense.

\par Let $G<\Qp^*$ be a small group with Mann property and finite index $[n]G$ (for instance a group of finite rank). We fix $T\in G$ such that $T^\Z\times D$, where $D^s$ is dense in $1+p^n\Zp$ as in Lemma \ref{subgroup Qp}. We will give an axiomatisation of $Th(\Qp, G)$. Again, we assume that $T\not=1$. Let $\mathcal{L}_G=\mathcal{L}_{Mac}\cup\{A, \lambda, \equiv_n (n\in\N), \gamma_g (g\in G)\}$. We define the theory $T_G$ as follow: let $(K, +,-,\cdot, 0,1, P_n (n\in\N), A, \lambda, \equiv_n (n\in\N), \gamma_g (g\in G))$ be a model of $T_G$, then
\begin{itemize}
	\item $(K, +,-,\cdot, 0,1, P_n (n\in\N))$ is a $p$-adically closed field;
	\item $A$ is a multiplicative subgroup of $K^*$, $A_{Tor}=G_{Tor}$;
	\item Let $k=v_p(T)$. Then, for all $x\in K[k]$, there is $a\in A$ such that $v_K(x)=v_K(a)$. $\lambda:K[k]\rightarrow A:x\longmapsto a$ with $v_K(x)=v_K(a)$ is a morphism groups, $\lambda(x)=1$ for all $x$ with $v(x)=0$ and $\lambda(p^k)=\gamma_T$;
	\item For all $a\in A$ there is a unique $a'\in \mathcal{O}_K^{\times}$ such that $a=a'\lambda(a)$;
		\item $((A_V^s, \cdot, ^{-1},1,\gamma_e, \equiv_n (n\in\N)), (VK_{\geq 0}, < ,S,0,\infty),\gamma_g (g\in G), V)$ is elementary equivalent to $(D^s, (\gamma_g)_{g\in G} V)$ as abelian $p$-valued groups where $V:A_V^s\rightarrow vK^*_{\geq 0}\cup\{\infty\}:a\longmapsto v_K(a-1)-n$ (surjective map), $\gamma_e$ is a fixed element of $G$ with minimal $V$-valuation and $a\equiv_n b$ iff there is $z\in A$ such that $a=bz^n$;
	\item $A_V^s$ is a dense subset of $1+p^n\mathcal{O}_K$;
	\item $A$ satisfies the dense axioms of $D_n(\xi)$;
	\item $A$ satisfies the same Mann axioms as $G$;
	\item $A$ is a small subset of $K$;
	\item $A\vDash Diag(G/\Q)$.
\end{itemize}

\begin{theorem}\label{full completness} $T_G$ is complete.
\end{theorem}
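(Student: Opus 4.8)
The plan is to run the back-and-forth argument of Theorem~\ref{completness} almost verbatim, isolating the one place where the nontrivial torsion of $G$ enters. First I would take two saturated models $(K,A)$ and $(L,B)$ of $T_G$ of the same cardinality (saturation at least $|G|$) and define $Sub(K,A)$ exactly as before: pairs $(K',A')$ with $K'$ a $p$-adically closed subfield of $K$ of size $<|K|$, $A'$ a pure subgroup of $A$, and $K'$ free from $\Q(A)$ over $\Q(A')$; similarly $Sub(L,B)$. Let $\Gamma$ be the set of $\mathcal{L}_{Mac}\cup\{A\}$-isomorphisms $\iota\colon(K',A')\to(L',B')$ between members of these classes that extend to an isomorphism of valued fields $K'(\lambda(K))^h\to L'(\lambda(L))^h$, which as in Theorem~\ref{completness} is automatically an isomorphism of $\mathcal{L}_G$-structures. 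Nonemptiness of $\Gamma$ is unchanged: $(\Q(G)^h,G)$ lies in both $Sub$-classes by the axioms and Lemma~\ref{G-vdD Lemma 5.12}, and since $(\Q(G)^h,\lambda(G))$, $(K,\lambda(K))$, $(L,\lambda(L))$ all model $T_d(\lambda(G))$, the proof of Theorem~\ref{discrete case} supplies an isomorphism over the identity between $\Q(G)(\lambda(K))^h$ and $\Q(G)(\lambda(L))^h$. The immediate-extension fact, Claim~\ref{sub-pseudo-Cauchy}, also carries over word for word.

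Next I would verify the extension step. Given $\iota\colon(K',A')\to(L',B')$ in $\Gamma$ with extension $\widetilde\iota$ to $K'(\lambda(A))$ and $\alpha\in K\setminus K'$, cases~2 (with $\alpha\in K'(\alpha_1,\dots,\alpha_n)^h$, $\alpha_i\in A$, reduced to case~1) and~3 (with $\alpha\notin K'(A)^h$, using the cut over $K'(\lambda(K))$, Claim~\ref{sub-pseudo-Cauchy}, smallness of $A$, and linear disjointness from regularity of $\Q(A)/\Q(A')$) are literally the arguments of Theorem~\ref{completness}. In case~1 we write $\alpha=\lambda(\alpha)\alpha'$ with $\alpha'\in A_V$, so it suffices to treat $\alpha=\lambda(\alpha)$ (case~1.(a), unchanged, via Fact~\ref{alg indep}) and $\alpha\in A_V$ (case~1.(b)), which is the only case needing new input.

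For case~1.(b) the idea is that by the remark preceding Lemma~\ref{density of roots}, $\alpha\in A_V$ is determined by the pair $(\alpha^s,\mathrm{res}\,\alpha)$ via Hensel's Lemma, so it is enough to find $\beta\in B_V$ with $\beta^s$ realising the correct type and $\mathrm{res}\,\beta=\mathrm{res}\,\alpha$. I would form the partial type over the image structure asking that $\beta^s$ have: (i) the same valued-group type over $A_V^s$ as $\alpha^s$; (ii) the same pseudo-Cauchy data over a small subfield of $K'(\lambda(K))^h$, transported by $\widetilde\iota$; and (iii) the transported membership $\beta^s\in\iota(a)^{-1}D_{n_i}(\xi_i)$ for each of finitely many conditions $\alpha^s\in a^{-1}D_{n_i}(\xi_i)$ with $a\in A_V^s$. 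Finite satisfiability of (i) and (ii) is Proposition~\ref{extension isom} together with regular density and density of $A_V^s$ in $1+p^n\mathcal{O}_K$, exactly as before; the new point is that adding (iii) costs nothing. By Lemma~\ref{density of roots} each $a^{-1}D_{n_i}(\xi_i)$ is dense in a finite union of cosets of some $1+p^r\mathcal{O}_K$, with $r$ independent of the parameters and the cosets depending only on the congruence classes of $a$ modulo $s+\sum n_j$ in $A_V^s$; since all indices $[n]D^s$ are finite, these classes have representatives in $G$, so the dense axioms of $D_n(\xi)$ in $T_G$ record, on both sides, which union of cosets is involved, and that union is met by $\beta^s$ inside the ball already produced for (i)--(ii) (the ball refines into such cosets, and equidistribution yields the incongruence conditions inside it). Picking a realisation $\beta^s$, extracting $\beta\in B_V$ by Hensel with $\mathrm{res}\,\beta=\mathrm{res}\,\alpha$, and setting $K''=K'(\alpha)^h$, $A''=A\cap K''$, $L''=L'(\beta)^h$, $B''=B\cap L''$, the extension is an $\mathcal{L}_{Mac}\cup\{A\}$-isomorphism whose behaviour on valuation and residue data forces $t\in A''$ iff $t\in A'\langle\alpha\rangle$ (Fact~\ref{alg indep} on $s$-th powers plus $\mathrm{res}$ to recover $\alpha$); membership in $Sub(K,A)$, $Sub(L,B)$ follows as before. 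Hence $\Gamma$ is a back-and-forth system and $T_G$ is complete.

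The main obstacle is precisely this last compatibility check in (iii): one must be certain that forcing $\beta^s$ into the prescribed coset of $1+p^r\mathcal{O}_K$ is simultaneously compatible with the valued-group type (i) and the pseudo-Cauchy type (ii) — that is, that the ball produced by Proposition~\ref{extension isom} genuinely meets that coset — and that it is the \emph{same} coset on the $K$-side and the $L$-side. This is where the hypothesis that every index $[n]G$ is finite is indispensable: it is what makes the relevant congruence classes nameable by constants drawn from $G$, and hence what lets the dense axioms of $D_n(\xi)$ transport the coset datum across $\iota$; without it the coset need not be first-order identifiable and the argument collapses.
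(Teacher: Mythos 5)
Your proposal is correct and takes essentially the same route as the paper: only case 1.(b) of the back-and-forth of Theorem \ref{completness} needs modification, and there one realises the union of the $\mathcal{L}_{Mac}$-type over $K'(\lambda(A))$, the $p$-valued-group type over ${A'}_V^s$, and the $D_n(\xi)$-membership conditions, whose finite consistency is exactly what the dense axioms of $D_n(\xi)$ (first-order precisely because all indices $[n]G$ are finite) provide, after which $\beta$ is recovered from $(\beta^s,\mathrm{res}\,\alpha)$. The only notational slip is that the membership conditions must be imposed on all powers $(\alpha^s)^k$ (as in Lemma \ref{density of roots} and in the paper's displayed type), not merely on $\alpha^s$ itself, but your appeal to that lemma already accommodates this.
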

\begin{proof} With the same notations as in Theorem \ref{completness}, it remains to deal with the case 1. (b) where $\alpha \in A_V$. For as discussed before it is sufficient to prove that the type 
$$tp(\alpha^s/K'(\lambda(A)))\cup tp(\alpha^s/A_V^s)\cup\{(\alpha^s)^k\in a^{-1}D_n(\xi):\ k,n\in \Z, \xi\in \{0,\cdots, p-1\}, a\in {A'^s}_V\}$$ is realised in $L$ where the first type is in the language $\mathcal{L}_{Mac}$ and the second in the language of $p$-valued groups. For by Claim \ref{sub-pseudo-Cauchy}, it is sufficient to prove that it is finitely consistent. By the axioms of density $D_n(\xi)$ this is the case. Let $\beta^s$ be one of its realisation. 
We conclude as in Theorem \ref{completness} case 1.(a) setting $K''=K'(\alpha)^h$, $A''=A\cap K''$, $L''=L'(\beta)^h$, $B''=B\cap L''$.
\end{proof}

\begin{Remark}
If $[n]G$ is not finite for all $n$, then the density axioms for $D_n$ are not sufficient. Nothing can be said about $x_i$'s in $D^s$ whose residue modulo $s+\sum n_j$ is not in $G$. So the type considered in the last proof may not be finitely consistent. 
 One can can still describe the theory of $Th(\Qp, G)$ if $G_{Tor}=\mathbb{F}_s$ and $res\ D=\mathbb{F}_s$. Indeed, in that case $A$ is the direct product between $\lambda(A)$, $G_{Tor}$ and $A_V^s$. So, there is $x\in D$ such that $(x^s)^n=(\alpha^s)^ka$ with $res\ x=\xi$ iff there is $y\in D^s$ $y^n=(\alpha^s)^ka$. So, $A'\langle\alpha\rangle\cong B'\langle\beta\rangle$ iff $A'^s\langle\alpha^s\rangle\cong B'^s\langle\beta^s\rangle$. So, the type of $\alpha$ is determined by $tp(\alpha^s/K'(\lambda(A)))\cup tp(\alpha^s/A_V^s)$.
\par In that case, $Th(\Qp, G)$ is axiomatised by the above axiomatisation where we replace $A$ satisfies the dense axioms of $D_n(\xi)$ by $res\ D\cong G_{Tor}\cong \mathbb{F}_s$.
\end{Remark}

\section{Definable sets in $(K,A)$}\label{definable sets}

In this section and for the rest of the paper, we assume $[n]G$ finite for all $n$.
\par Let $G<\Qp^*$ be a small subgroup with Mann property. Then, $G=\lambda(G)\times D$ where $D=G\cap \Zp^\times$. Let us remark that the structures $(\Qp, +,-,\cdot, (P_n)_{n\in \N}, G, \lambda, (\equiv_n)_{n\in \N},\gamma_g)$ and $(\Qp, +,-,\cdot, (P_n)_{n\in \N}, \lambda(G), D, \lambda, (\equiv_n)_{n\in \N},(\gamma_g)_{g\in G})$ are interdefinable. By Theorem \ref{discrete case}, $T_d(\lambda(G))$ admits the elimination of quantifier. It remains to describe the definable sets of the expansion of $(\Qp, \lambda(G))$ by a predicate for $D$. By Lemma \ref{subgroup Qp}, there is $s,n$ such that $D^s$ is a dense subgroup of $1+p^n\Zp$. We work in the structure $(\Qp, \lambda(G), D)$. The description of definable sets is similar to the one in Theorem 7.5 in \cite{Gunaydin-vdD}. We define the language $\mathcal{L}_{D,\lambda}=\mathcal{L}_{Mac}\cup\{\lambda G,G_V,\lambda, (\equiv_n)_{n\in \N},(\gamma_g)_{g\in G}\}$ where $\lambda G$ is interpreted by $\lambda(G)$ and $G_V$ by $D$.
\par Let $\mathcal{L}_{pV}$ be the language of $p$-valued groups as in Section \ref{valued groups}. Let $\mathcal{L}_{pV}(\Sigma)=\mathcal{L}_{pV}\cup\{\Sigma_{\overline{k}}, \overline{k}\subset \Z\}$ the expansion of the language of $p$-valued groups by predicates $\Sigma_{\overline{k}}$ interpreted in $D^s$ by
$$D^s\vDash \Sigma_{\overline{k}}(\overline{g}) \mbox{ iff } \sum_i k_ig_i=0.$$
We denote by $D(\Sigma)$ the $\mathcal{L}_{pV}(\Sigma)$-structure with underlying set $D^s$.
If $\Phi$ is a $\mathcal{L}_{pV}(\Sigma)$-formula, we define $\Phi_G$ a $\mathcal{L}_{D,\lambda}$-formula defined as follows (by induction on the complexity of $\Phi$):
\begin{itemize}
	\item If $\Phi$ is atomic, $\Phi_G\equiv \Phi$ (where the language $\mathcal{L}_{pV}(\Sigma)$ is interpreted in $\mathcal{L}_{D,\lambda}$);
	\item If $\Phi\equiv \neg \Psi$, $\Phi_G\equiv \neg \Psi_G$;
	\item If $\Phi\equiv \Psi\wedge \theta$,  $\Phi_G\equiv \Psi_G\wedge \theta_G$;
	\item If $\Phi\equiv \exists x \Psi(x)$, $\Phi_G\equiv \exists x\in D^s \Phi_G(x)$.
\end{itemize}
A special formula is a formula of the type
$$\exists \overline{y} \wedge_i y_i\in G_V\wedge \Phi_G(\overline{y}^s)\wedge \theta(\overline{x},\overline{y})$$
where $\Phi$ is a $\mathcal{L}_{pV}(\Sigma)$-formula and $\theta$ is a $\mathcal{L}_{Mac}\cup\{\lambda(A),\lambda\}$-formula.
\begin{lemma}\label{Special formula} Any $\mathcal{L}_{D,\lambda}$-formula is equivalent in $T_G$ to a boolean combination of special formulas.
\end{lemma}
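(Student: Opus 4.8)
The plan is to imitate the quantifier‑elimination argument from the analogous statement in \cite{Gunaydin-vdD} (Theorem 7.5 there), adapted to the $p$‑adic setting and to the presence of $\lambda$ and of the predicate $G_V$ for $D$. The strategy is the standard one: show that the set of boolean combinations of special formulas is closed (up to equivalence in $T_G$) under negation, conjunction, and existential quantification. Negation and conjunction are essentially formal once one has a normal form, so the whole content is the treatment of $\exists$. Thus I would first record that every atomic $\mathcal{L}_{D,\lambda}$‑formula is already of the required shape (a quantifier‑free $\mathcal{L}_{Mac}\cup\{\lambda G,\lambda\}$‑condition, or a condition of the form $x\in G_V$, or a congruence), so that an arbitrary $\mathcal{L}_{D,\lambda}$‑formula is, after pushing negations inward, a boolean combination of such atoms with quantifiers over $K$; the induction then reduces to eliminating one block $\exists \overline{u}$ of field variables from a conjunction of atoms and already‑obtained special formulas.

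Next I would separate, in such a conjunction, the variables that are asserted to lie in $D=G_V$ from the genuinely "field" variables. For a variable $u$ constrained by $u\in G_V$, raising to the $s$‑th power moves the constraint into $D^s$, which by Lemma~\ref{subgroup Qp} is a dense subgroup of $1+p^n\Zp$; one then uses that $u$ is determined by the pair $(u^s,\operatorname{res}u)$ (Hensel, as in the paragraph before Lemma~\ref{density of roots}), so that any field‑polynomial condition on $u$ can be rewritten as: a condition on $u^s\in D^s$, a condition on $\operatorname{res}u\in\Fp^*$, plus $\mathcal{L}_{Mac}$‑data. The additive/multiplicative relations among elements of $D^s$ that survive are exactly what the predicates $\Sigma_{\overline k}$ were introduced to record (writing $D^s$ additively via $\log_p$), so such conditions translate into $\mathcal{L}_{pV}(\Sigma)$‑formulas $\Phi$ evaluated on the $\overline y^s$. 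The remaining existential quantifiers over honest field variables are absorbed into the $\mathcal{L}_{Mac}\cup\{\lambda G,\lambda\}$‑part $\theta(\overline x,\overline y)$, using that $T_d(\lambda(G))$ eliminates quantifiers (Theorem~\ref{discrete case}), so that $\theta$ can always be taken quantifier‑free in that language; finally the densities supplied by Lemma~\ref{density of roots} (the "dense axioms of $D_n(\xi)$") are what allow one to decide, inside $T_G$, when a system of coset/congruence conditions on the $\overline y$ is solvable in $G_V$, so no further quantifier alternation is needed.

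The main obstacle, as in \cite{Gunaydin-vdD}, is the interaction between the multiplicative group structure on $D$ and the field structure: when one eliminates a quantified field variable $u$ from a polynomial equation with coefficients involving other $G_V$‑variables, the resulting relations are multiplicative relations among products $\overline y^{\overline m}$, and one must be sure these can be expressed purely in $D(\Sigma)$ rather than requiring new field‑theoretic data. Here the key input is the Mann property (Lemma~\ref{G-vdD Lemma 5.12}): a linear relation $\sum a_i \overline y^{\overline m_i}=0$ with $a_i\in\Q^*$ either is degenerate — in which case one splits into sub‑sums and induces on the number of monomials — or is one of the finitely many nondegenerate Mann solutions, hence pinned down up to the constants $\gamma_g$; either way it becomes an $\mathcal{L}_{pV}(\Sigma)$‑condition on the exponents together with a choice among finitely many constants. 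Combining this with the smallness of $G$ (to guarantee that the quantified field element genuinely outside $K'(A)^h$ contributes nothing new, exactly as in case 3 of the proof of Theorem~\ref{completness}) closes the induction.

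A clean way to organize the write‑up is: (i) reduce to one existential block over field variables by the usual induction on formula complexity; (ii) isolate the $G_V$‑variables and pass to $s$‑th powers, rewriting their field‑relations via $\Sigma_{\overline k}$ and $\operatorname{res}$; (iii) use quantifier elimination for $T_d(\lambda(G))$ to normalize the $\mathcal{L}_{Mac}\cup\{\lambda G,\lambda\}$‑part; (iv) invoke the Mann axioms to control any surviving multiplicative relations, and the density axioms of $D_n(\xi)$ to eliminate the $\exists\overline y\in G_V$ block down to the special‑formula shape. Steps (i), (iii) are routine; steps (ii) and (iv) carry the real weight, and (iv) is where I expect the bookkeeping to be heaviest.
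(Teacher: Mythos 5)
Your plan treats the lemma as a syntactic quantifier-elimination induction, and the step you dismiss as routine is in fact the whole difficulty. A special formula is itself an existential statement over $G_V$, so the class of boolean combinations of special formulas is not closed under quantification in any formal way: once negations are pushed inside, the inductive step forces you to eliminate a field quantifier from something of the shape $\exists u\,\bigl(\text{special}\wedge\neg\text{special}\wedge\cdots\bigr)$, i.e.\ an $\exists$ over $K$ followed by a hidden $\forall\overline{y}\in G_V$ coming from the negated special formulas. Your step (i) reduces to ``a conjunction of atoms and already-obtained special formulas'' and your step (iii) proposes to ``absorb'' the field quantifier into the $\mathcal{L}_{Mac}\cup\{\lambda G,\lambda\}$-part via Theorem~\ref{discrete case}; but that theorem only eliminates field quantifiers from pure $\mathcal{L}_{Mac}\cup\{\lambda G,\lambda\}$-formulas, and it gives you no way to move $\exists u$ past a universal block over $G_V$. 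The Mann axioms, the density axioms of $D_n(\xi)$ and smallness are indeed the right ingredients, but as you use them they only control positive multiplicative relations and solvability of coset conditions; they do not, by themselves, furnish a syntactic procedure for this quantifier alternation, and ``the quantified field element outside $K'(A)^h$ contributes nothing new'' is not something you can express or exploit at the level of formulas --- it only makes sense via saturation.

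The paper avoids this entirely by arguing semantically: it takes saturated models $(K,A)$ and $(L,B)$ of $T_G$ and shows that any two tuples $\overline{\alpha}$, $\overline{\beta}$ satisfying the same special formulas have the same type, by producing a partial isomorphism in the back-and-forth system of Theorem~\ref{completness} sending $\overline{\alpha}$ to $\overline{\beta}$ (equality of transcendence degrees over $\Q(A)$ and $\Q(B)$, extraction of small elementary substructures $D'_A(\Sigma)\prec D^s_A(\Sigma)$, realization of the relevant type to get $\widetilde{D}_B$ using saturation, Fact~\ref{alg indep}, and quantifier elimination for $T_d$); the boolean-combination statement then follows by the standard separation/compactness criterion. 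If you want to salvage your write-up, the fix is to replace steps (i)--(iv) by this semantic criterion: your observations in (ii) and (iv) reappear there as the verification that the constructed map respects $G_V$ and the $\Sigma_{\overline{k}}$-structure, but the elimination of the field quantifier is done by saturation and the back-and-forth system, not by induction on formula complexity.
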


\begin{proof}
Let $(K,A)$, $(L, B)$ be two saturated models of $T_G$ of same cardinality. Let $(K,\lambda(A), D_A)$ and $(L, \lambda(B), D_B)$ be the corresponding $\mathcal{L}_{D,\lambda}$-structure. So, $A=\lambda(A)\times D_A$ and $B=\lambda(B)\times D_B$. Let $\overline{\alpha}\in K^n$ and $\overline{\beta}\in L^n$ that satisfy the same special formulas. We have to prove that $tp_{(K,\lambda(A),D_A)}(\overline{\alpha})=tp_{(L, \lambda(B), D_B)}(\overline{\beta})$.  For it is sufficient to find $\iota$ isomorphism in the back-and-forth system defined in the proof of Theorem \ref{completness} that sends $\overline{\alpha}$ to $\overline{\beta}$.
\begin{claim}trdeg $\Q(A)(\overline{\alpha})=$ trdeg $\Q(B)(\overline{\beta})$.
\end{claim}
\begin{proof} Without loss of generality, $\alpha_1,\cdots ,\alpha_r$ are algebraically independent over $\Q(A)$ and the above transcendence degree is $r$. Assume that $\beta_1,\cdots, \beta_r$ are algebraically dependent over $\Q(B)$. So there is $\phi(\overline{x},\overline{y})$ a $\mathcal{L}_{Mac}$-formula and $\overline{h}\subset H$ such that 
$$(L,B)\vDash \Phi(\overline{h},\beta_1,\cdots, \beta_r)\wedge \exists^{\leq n}x_r\Phi(\overline{h},\beta_1,\cdots,\beta_{r-1},x_r).$$
Assume that $h_i=s_i\lambda(h_i)$ where $s_i\in D_B$. Then,
$$(L,B)\vDash \exists \overline{s}\in D_B \exists \overline{t}\in \lambda(B) \Phi(\overline{st},\beta_1,\cdots, \beta_r)\wedge \exists^{\leq n}x_r\Phi(\overline{st},\beta_1,\cdots,\beta_{r-1},x_r).$$
As $\overline{\alpha}$ and $\overline{\beta}$ satisfy the same special formulas, there is $\overline{g}\subset D_A$ and $\overline{t}\subset \lambda(A)$ such that 
$$(K,A)\vDash \Phi(\overline{gt},\alpha_1,\cdots, \alpha_r)\wedge \exists^{\leq n}x_r\Phi(\overline{g\lambda(h)},\alpha_1,\cdots,\alpha_{r-1},x_r).$$
This is a contradiction with the algebraic independence of $\alpha_1,\cdots ,\alpha_r$ over $A$.
\end{proof}
Let $D'_A:=\{g_1,g_2,\cdots\}$ be a subgroup of $D_A^s$ of cardinality $|G|$ such that $D'_A(\Sigma)\prec D_A^s(\Sigma)$ (as valued groups) and $\overline{\alpha}$ has transcendence degree $r$ over $\Q(D'_A, \lambda(A))$. Let $\widetilde{D}_A=\{g\in A\mid\ g^s\in D'_A\}$. Let $\widetilde{A}_\lambda$ be the image under $\lambda$ of the $\lambda$ closure of $\Q(\widetilde{D}_A,\overline{\alpha})$ (the $\lambda$-closure is of a substructure $M$ is the smallest field that contains $M$ and is closed under $\lambda$ - see construction in the proof of Claim \ref{sub-pseudo-Cauchy}). Let $\theta_1,\cdots, \theta_n$ be $\mathcal{L}_{pV}$-formulas and $\Phi_1,\cdots, \Phi_m$ be $\mathcal{L}_{Mac}\cup\{\lambda(A), \lambda\}$-formulas such that 
$$D_A(\Sigma)^s\vDash\theta_i(\overline{g}^s)\qquad K\vDash \Phi_j(\overline{\alpha},\overline{g})$$
for some $\overline{g}\subset \widetilde{D}_A$ and for all $i,j$. Then,
$$(K,A)\vDash \exists \overline{y}\in D_A  \bigwedge_i\theta_{i,G}(\overline{y}^s)\wedge\bigwedge_j \Phi_j(\overline{\alpha},\overline{y}).$$
As $\overline{\alpha}$ and $\overline{\beta}$ satisfy the same special formulas, 
$$(L,H)\vDash \exists \overline{y}\in D_B \bigwedge_i\theta_{i,G}(\overline{y}^s)\wedge\bigwedge_j \Phi_j(\overline{\beta},\overline{y}).$$
So the following type is consistent:
$$\{ G_V(\overline{y})\}\cup \{\theta_{i,G}(\overline{y}^s)\}\cup \{\Phi_j(\overline{\beta},\overline{y})\},$$
where $\theta_i$, $\Phi_j$ runs over the formulas like above. Let $\widetilde{D}_B=\{h_1,h_2,\cdots\}$ be a realisation of the type in $(L,B)$. Let $D'_B=\widetilde{D}_B^s$ and $\widetilde{B}_\lambda$  be the image under $\lambda$ of the $\lambda$ closure of $\Q(\widetilde{D}_B,\overline{\beta})$. Then $D'_B(\Sigma)\prec D_B^s(\Sigma)$ and the application $g_i^s\rightarrow h_i^s$ induces an isomorphism of valued groups between $D'_A$ and $D'_B$. Let us remark that if there is $y\in\widetilde{D}_A$ with $res\ y=\xi$ then there is $z\in \widetilde{D}_B$ such that $z^s$ is the image of $y^s$ and $res\ z=\xi$ (for it is expressible by a special formula). This $z$ is uniquely determined by the pair $(z^s,res\ y)$. Furthermore, $\overline{\beta}$ has transcendence degree $r$ over $\Q(\widetilde{D}_B, \lambda(B))$. Finally, the extension of this morphism that sends $\overline{\alpha}$ to $\overline{\beta}$ induces a isomorphism of $\mathcal{L}_{Mac}\cup\{\lambda(A), \lambda\}$-structures between $\Q(\widetilde{D}_B,\widetilde{A}_\lambda)(\overline{\alpha})$ and $\Q(\widetilde{D}_B, \widetilde{B}_\lambda)(\overline{\beta})$. Let $K'=\Q(\widetilde{D}_A,\widetilde{A}_\lambda)(\overline{\alpha})^h$, $L'=\Q(\widetilde{D}_B,\widetilde{B}_\lambda)(\overline{\beta})^h$. So, by Fact \ref{alg indep} and construction of $\widetilde{D}_A$ and $\widetilde{D}_B$, $K'\cap D_A= \widetilde{D}_A$ and $L'\cap D_B= \widetilde{D}_B$.
 So, $(K',G')$ and $(L',H')$ are isomorphic $\mathcal{L}_G$-structures. Furthermore, by Theorem \ref{discrete case}, this isomorphism extends to an isomorphism between $K'(\lambda(A))^h$ and $L'(\lambda(B))^h$. So, it belongs to the back-and-forth system.
\end{proof}

\begin{prop}\label{Definable set} 	Let $G= T^\Z\times D<\Qp^*$ be a small subgroup with Mann property and $[n]D^s$ is finite for all $n$. Let $(K,A)$ be a model of $T_G$. Then, every definable subset of $(K,A)$ is a boolean combination of subsets of $K^n$ defined by formulas $\exists \overline{y} A_V(\overline{y})\Phi(\overline{x},\overline{y})$ where $\Phi(\overline{x},\overline{y})$ is a quantifier-free formulas in the language of $p$-adically closed fields extended by $\{\lambda(A), \lambda, T\}$ and $A_V=A\cap \mathcal{O}_K^\times$.
\end{prop}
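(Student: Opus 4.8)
The plan is to combine the reduction to special formulas (Lemma~\ref{Special formula}) with quantifier elimination for $p$-valued groups (Theorem~\ref{EQ valued groups}). Since $(\Qp,A)$ in $\mathcal{L}_G$ and $(\Qp,\lambda(A),D)$ in $\mathcal{L}_{D,\lambda}$ are interdefinable, every definable subset of $(K,A)$ is $\mathcal{L}_{D,\lambda}$-definable, hence by Lemma~\ref{Special formula} a boolean combination of sets defined by special formulas $\exists\overline{y}\,\big(\bigwedge_i A_V(y_i)\wedge\Phi_G(\overline{y}^{\,s})\wedge\theta(\overline{x},\overline{y})\big)$, where $\Phi$ is an $\mathcal{L}_{pV}(\Sigma)$-formula and $\theta$ an $\mathcal{L}_{Mac}\cup\{\lambda(A),\lambda\}$-formula. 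By quantifier elimination for the discrete part (Theorem~\ref{discrete case}) one may take $\theta$ quantifier-free, and $\gamma_T=\lambda(p^{v_p(T)})$ is a term; so it remains to show that, for $\overline{y}$ ranging over $A_V$, the subformula $\Phi_G(\overline{y}^{\,s})$ can be replaced by a quantifier-free $\mathcal{L}_{Mac}\cup\{\lambda(A),\lambda,T\}$-formula, at the cost of further existential quantifiers over $A_V$ that are then absorbed into the block $\exists\overline{y}$.

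The next step is to eliminate the group-sort quantifiers inside $\Phi_G$. In $(K,A)$ these range over the $p$-valued group $A_V^{\,s}$, which by the axioms of $T_G$ is elementarily equivalent, with the constants $\gamma_g$, to $(D^s,V)$, and is therefore a model of a completion of $T_{pV,N}$. The predicates $\Sigma_{\overline{k}}$ are already quantifier-free over $\mathcal{L}_{pV}$, since $\Sigma_{\overline{k}}(\overline{g})$ holds exactly when $V(k_1g_1+\cdots+k_ng_n)=\infty$; so Theorem~\ref{EQ valued groups} applies to the expansion by the $\Sigma_{\overline{k}}$ and the constants as well, and $\Phi$ is $T_G$-equivalent to a quantifier-free $\mathcal{L}_{pV}(\Sigma)$-formula. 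Consequently $\Phi_G(\overline{y}^{\,s})$ becomes a boolean combination of atomic $\mathcal{L}_{pV}(\Sigma)$-formulas in the $y_i^{\,s}$, interpreted in $(K,A)$.

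Finally I would translate these atoms into the field language. On $A_V^{\,s}$ the group operation is multiplication and $V$ is $x\mapsto v_K(x-1)-n$ (with $n,s$ as in Lemma~\ref{subgroup Qp}), the value group of $K$ with $<,S,0,\infty$ is interpretable in a $p$-adically closed field, and $v_K(u)\le v_K(w)$ is quantifier-free ($w^2+pu^2\in P_2$); so every valuational atom and every equation goes over to a quantifier-free $\mathcal{L}_{Mac}$-condition, while $\Sigma_{\overline{k}}(\overline{y}^{\,s})$ becomes the polynomial equation $\prod_i y_i^{\,sk_i}=1$, the $c_{ij}$ and $\gamma_g$ occurring as parameters. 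The delicate atoms are the congruences: using $v_K(w_1w_2^{-1})=0$ together with $A=\lambda(A)\times A_V$ and $A_{Tor}=\mu_s$, one checks that for terms $w_1,w_2$ in $\overline{y}^{\,s}$ the relation $w_1\equiv_m w_2$ is equivalent to $\exists z\,(A_V(z)\wedge w_1w_2^{-1}=z^{m})$, and --- using that $[m]D^s$ is finite with coset representatives $c_{m,j}$ ($0\le j<[m]D^s$) --- its negation is equivalent to $\bigvee_{j\neq 0}\exists z\,(A_V(z)\wedge w_1w_2^{-1}=c_{m,j}z^{m})$. These new existential quantifiers over $A_V$ are positive, so they pull to the front and merge with $\exists\overline{y}$, leaving a formula of the required shape $\exists\overline{y}\,(\bigwedge_i A_V(y_i)\wedge\Psi(\overline{x},\overline{y}))$ with $\Psi$ quantifier-free over $\mathcal{L}_{Mac}\cup\{\lambda(A),\lambda,T\}$. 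I expect the main obstacle to be the quantifier-elimination step --- checking that Theorem~\ref{EQ valued groups} can be invoked in the presence of the predicates $\Sigma_{\overline{k}}$ and the field constants $\gamma_g$ --- which is where the Mann property and the standing hypothesis that all $[n]G$ are finite enter (the latter also being exactly what makes the rewriting of negated congruences above possible); the remaining atom-by-atom translation is routine but must be carried out carefully.
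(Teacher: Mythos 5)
Your overall route is the paper's: reduce to special formulas via Lemma~\ref{Special formula}, eliminate the group-sort quantifiers by quantifier elimination for $p$-valued groups, translate the resulting atoms into existential conditions over $A_V$ (using finiteness of the indices to rewrite negated congruences positively) and absorb them into the outer block, then finish with Theorem~\ref{discrete case}. The gap is in your treatment of the predicates $\Sigma_{\overline{k}}$. In the definition ``$D^s\vDash\Sigma_{\overline{k}}(\overline{g})$ iff $\sum_i k_ig_i=0$'' the sum is \emph{field} addition in $\Qp$: these predicates record precisely the additive relations that the field induces on the multiplicative group (this is what the proof of Lemma~\ref{Special formula} needs, where an isomorphism of $\Sigma$-structures must extend to a field isomorphism of $\Q(\widetilde{D}_A)$ onto $\Q(\widetilde{D}_B)$; it is the analogue of the induced structure on the group in \cite{Gunaydin-vdD}). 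In the $\mathcal{L}_{pV}$-structure on $D^s$ the group operation is multiplication, so the $\mathcal{L}_{pV}$-term $k_1g_1+\cdots+k_ng_n$ denotes $g_1^{k_1}\cdots g_n^{k_n}$, and your identity ``$\Sigma_{\overline{k}}(\overline{g})$ iff $V(k_1g_1+\cdots+k_ng_n)=\infty$'' only expresses the multiplicative relation $\prod_i g_i^{k_i}=1$, not $\Sigma_{\overline{k}}$; likewise your final translation of $\Sigma_{\overline{k}}(\overline{y}^{\,s})$ as $\prod_i y_i^{\,sk_i}=1$ is not the meaning of this atom (it should be the linear equation $\sum_i k_iy_i^{\,s}=0$, which is in any case a quantifier-free $\mathcal{L}_{Mac}$-condition). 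Since your only justification for invoking Theorem~\ref{EQ valued groups} in the presence of the $\Sigma_{\overline{k}}$ is this erroneous quantifier-free definability, the central quantifier-elimination step of your argument is unsupported.

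The missing ingredient is the Mann property, which you mention only as a vague expectation at the end but never use where it is needed. Because every model of $T_G$ satisfies the same Mann axioms as $G$, the trace of $\Sigma_{\overline{k}}$ on $A_V^s$ is a finite positive boolean combination of multiplicative conditions $y_i=\gamma y_j$ with $\gamma$ ranging over finitely many elements of $G$: split $\sum_i k_iy_i=0$ into minimal vanishing subsums, divide each by one of its terms, and use that the nondegenerate solutions of the resulting equations $\sum a_ix_i=1$ are the finitely many standard ones. Hence $\Sigma_{\overline{k}}$ is quantifier-free definable in $\mathcal{L}_{pV}$ \emph{with the constants} $\gamma_g$ --- via the Mann axioms and Lemma~\ref{G-vdD Lemma 5.12}, not via the valuation --- and only after this reduction does Theorem~\ref{EQ valued groups} (whose elimination of quantifiers persists after adding the constants for $D^s$) yield the boolean combination of multiplicative equations, valuation comparisons and congruences used in the paper's proof. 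With that correction, your remaining steps (rewriting congruences and their negations as positive existentials over $A_V$ using the finitely many coset representatives, which can indeed be taken in $G$, pulling the quantifiers to the front, and quoting Theorem~\ref{discrete case} for the field part) agree with the paper.
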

\begin{proof}
By the above lemma, every formula is equivalent to a boolean combination of special formulas in $\overline{x}$ i.e. formula of the type
$$\Psi(\overline{x})\equiv \exists \overline{y} A_V(\overline{y})\wedge  \theta_{G}(\overline{y}^s)  \wedge \phi(\overline{x},\overline{y}).$$
Let $A_{V,s}=(A\cap\mathcal{O}_K^\times)^{s}$ ($s$th powers).
By quantifier elimination for abelian $p$-valued groups, $\{\overline{g}\in A_{V,s}^m\mid A_{V,s}\vDash \theta(\overline{y})\}$ is equivalent to a boolean combination of
\begin{enumerate}[(a)]
	\item $$\{\overline{g}\in A_{V,s}^m\mid \xi_{\overline{k}}(\overline{g})=1\};$$
	\item $$\{\overline{g}\in A_{V,s}^m\mid V(\xi_{\overline{k}}(\overline{g}))+r\Square{}{}V(\xi_{\overline{k'}}(\overline{g}))+r'\};$$
	\item $$\{\overline{g}\in A_{V,s}^m\mid \xi_{\overline{k}}(\overline{g})\equiv_n i\},$$ 
\end{enumerate} 
where $\xi_{\overline{k}}(\overline{g})=g_1^{k_1}\cdots g_n^{k_n}$ with $k_i\in\Z$.
Any of these formulas is definable by a quantifier-free $\mathcal{L}_{Mac}$-formula and existential quantifier over $A_{V,s}$ (we need that $[n](G\cap \Zp)$ is finite for congruences). So, $\Psi$ is equivalent to
$$\Psi(\overline{x})\equiv \exists \overline{z} A_V(\overline{z})\wedge \phi'(\overline{x},\overline{z}),$$
with $\phi'$ a $\mathcal{L}(\lambda(A),\lambda)$-formula (it can be assumed quantifier-free by Theorem \ref{discrete case}).
\end{proof}

Finally, we describe which subsets of $A$ are definable in the dense case.
\begin{prop}\label{definable set in G} Let $G<\Qp^*$ be a dense subgroup of $1+p^n\Zp$. Let $(K,A)\vDash Th(\Qp,G)$. Let $X$ be a subset of $A^n$ definable in $(K,A)$ (with parameters in $K$). Then there is $Y$ definable in $K$ and $Z$ definable in $A$ (in the language of $p$-valued groups) such that $X=Y\cap Z$. 
\end{prop}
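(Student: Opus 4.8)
The plan is to reduce this statement to the structure theory already developed, in particular Proposition \ref{Definable set} and Lemma \ref{Special formula}, specialized to the case where the discrete part $T^\Z$ is trivial, so that $A = A_V = D_A$ is (elementarily equivalent to) a dense subgroup of $1+p^n\mathcal{O}_K$. First I would observe that it suffices to treat a single defining formula and, by Proposition \ref{Definable set} (or directly Lemma \ref{Special formula}), that we may assume $X$ is cut out on $A^n$ by a boolean combination of special formulas $\exists \overline{y}\,(A_V(\overline{y}) \wedge \Phi_G(\overline{y}^{\,s}) \wedge \theta(\overline{x},\overline{y}))$ where $\theta$ is quantifier-free in $\mathcal{L}_{Mac}\cup\{\lambda\}$ (here $s\mid p-1$ is as in Lemma \ref{subgroup Qp}; if $s=1$ the $\overline{y}^{\,s}$ is just $\overline{y}$). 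Since boolean combinations of sets of the form $Y\cap Z$ with $Y$ $K$-definable and $Z$ $A$-definable are again of that form (intersections distribute, and complements split because $A^n$ itself is the ambient set), it is enough to handle one special formula.

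Next I would analyze such a special formula restricted to $\overline{x}\in A^n$. The point is that when the parameters $\overline{x}$ already range over $A$, the field-theoretic content $\theta(\overline{x},\overline{y})$ and the witnesses $\overline{y}\in A_V$ all live inside the group $A$, so one should be able to replace the existential field quantifier by a group-theoretic condition. Concretely, I would invoke Mann property via Lemma \ref{G-vdD Lemma 5.12} and Fact \ref{alg indep}: the algebraic relations over $\Q$ among elements of $A$ are controlled by the multiplicative relations in $A$ together with the relations among the (finitely many) roots of unity and the transcendence data, all of which are visible in the induced structure on $A$. Thus a quantifier-free $\mathcal{L}_{Mac}$-condition on a tuple from $A$ is equivalent to a boolean combination of: multiplicative-group relations $\sum_i k_i y_i = 0$ in $D^s$ (the predicates $\Sigma_{\overline k}$), congruence conditions $\equiv_n$, valuation-comparison conditions $V(\cdot)\,\square\,V(\cdot)$, residue conditions $res\ y = \xi$, and a fixed finite list of "Mann-type" equalities coming from the finitely many nondegenerate solutions. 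All of these are expressible in the $p$-valued-group language $\mathcal{L}_{pV}(\Sigma)$ augmented by residues — i.e. in the induced structure on $A$ — \emph{except} possibly for the $\lambda$-terms; but on $A$, $\lambda(a)$ is the unique element of $\lambda(A)$ with $v(\lambda(a))=v(a)$, and since here the discrete part is trivial every element of $A$ has valuation $0$, so $\lambda$ is constant $1$ on $A$ and drops out entirely.

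Carrying this out, the special formula restricted to $A^n$ becomes $\exists \overline{y}\in A_V\,(\chi(\overline{x},\overline{y}))$ with $\chi$ a formula in the induced ($p$-valued group plus $\Sigma$ plus residues) language, hence it defines a set $Z$ definable in $A$ alone; in that case $X = A^n \cap Z = Z$, and one takes $Y = A^n$ (which is $K$-definable since $A$ is a predicate). For a general boolean combination one collects the $K$-definable pieces into $Y$ and the $A$-definable pieces into $Z$ and uses $A^n\cap(Y\cap Z)$. The main obstacle I anticipate is the careful verification that the existential field quantifier $\exists \overline{y}$ genuinely collapses to an existential \emph{group} quantifier when all parameters are in $A$ — i.e. that no genuinely transcendental/field-theoretic witness outside $A$ is needed; this is exactly where one must use smallness of $G$ together with the back-and-forth analysis in the proof of Theorem \ref{completness} (case 1.(b) and case 3), and in particular Fact \ref{alg indep} to see that $acl(\overline{x},A')\cap A$ is just the pure closure $A'\langle\overline{x}\rangle$, so that the relevant type over $A$ is realized inside $A$. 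Handling the residue/root-of-unity bookkeeping (the passage between $t\in D$ and the pair $(t^s, res\ t)$, as in Lemma \ref{density of roots}) is the secondary technical point, but it is routine given the earlier lemmas.
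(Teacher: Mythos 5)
Your overall plan (reduce to special formulas, then try to turn the trace on $A^n$ into a group-definable condition) is not the paper's argument, and as written it has a genuine gap at the decisive step. First, your translation of the matrix $\theta(\overline{x},\overline{y})$ into group-language conditions silently assumes that all field-theoretic data lives in $A$; but the proposition allows arbitrary parameters from $K$, so $\theta$ contains conditions such as $v(P(\overline{x},\overline{y}))\geq v(Q(\overline{x},\overline{y}))$ or coset-of-$P_n$ conditions with coefficients in $K\setminus \Q(A)$. Mann property, Lemma \ref{G-vdD Lemma 5.12} and Fact \ref{alg indep} control algebraic relations of group elements over $\Q(A')$ only; they say nothing about these mixed conditions, and since the conditions involve the quantified witnesses $\overline{y}$, you cannot simply shunt them into the $K$-definable set $Y$ while they sit under $\exists\overline{y}\in A_V$. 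Second, even if you did split the matrix as $\chi_{\mathrm{group}}(\overline{x},\overline{y})\wedge\psi_{\mathrm{field}}(\overline{x},\overline{y})$, the quantifier does not distribute: $\exists\overline{y}\,(\chi\wedge\psi)$ is not $(\exists\overline{y}\,\chi)\wedge(\exists\overline{y}\,\psi)$, and proving that on $A^n$ the resulting set nevertheless splits as $Y\cap Z$ is precisely the content of the proposition; you flag this as ``the main obstacle'' and point to the right ingredients (case 1.(b), smallness, density), but you do not carry out the argument, so the proof is incomplete exactly where it matters. A third, smaller discrepancy: your $Z$ would a priori be definable in the induced structure with the predicates $\Sigma_{\overline{k}}$ (additive relations) and residues, whereas the statement requires $Z$ definable in the language of $p$-valued groups; additive relations among the quantified witnesses cannot be moved into $Y$ without further argument.

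For comparison, the paper's proof avoids the syntactic analysis altogether: it shows that if $\overline{g},\overline{h}\in A^n$ have the same $\mathcal{L}_{Mac}$-type over $K$ and the same $p$-valued-group type over $A$, then $(K',A',\overline{g})\equiv_K(L,H,\overline{h})$, by exhibiting a member of the back-and-forth system of Theorem \ref{completness} (with the $\lambda$-steps removed, since $\lambda(G)$ is trivial) that sends $\overline{g}$ to $\overline{h}$, namely the map between $(K(\overline{g})^h, A\cap K(\overline{g})^h)$ and $(K(\overline{h})^h, H\cap K(\overline{h})^h)$ furnished by case 1.(b). The decomposition $X=Y\cap Z$ then follows from this type-separation statement. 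If you want to salvage your approach, the correct move is to prove this semantic statement directly (which is where density, equidistribution via Proposition \ref{extension isom}, and Fact \ref{alg indep} are actually used), rather than attempting a formula-by-formula collapse of the existential quantifier.
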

\begin{proof}
It is sufficient to prove the following:
\par Let $(K',A')$, $(L,H)$ be two $|K|$-saturated expansions of $(K,A)$. Let $\overline{g}\in {A'}^n$ and $\overline{h}\in H^n$ such that $tp_f(\overline{g}/K)=tp_f(\overline{h}/K)$ (type in the language of $p$-adically closed fields) and $tp_g(\overline{g}/A)=tp_g(\overline{h}/A)$ (type in the language of $p$-valued groups) then $(K',A',\overline{g})\equiv_K (L,H,\overline{h})$.
\par For it is sufficient to find an element of the back-and-forth system in the proof of Theorem \ref{completness} whose domain and image contain $\overline{g}$ and $\overline{h}$ resp. Take $K_1=K(\overline{g})^h$, $A_1=K_1\cap A'$ and $K_2=K(\overline{h})^h$ and $A_2=K_2\cap H$. By choice of $\overline{g},\overline{h}$ and the proof of Theorem \ref{completness} (case 1. (b)), the application that sends $\overline{g}$ to $\overline{h}$ induces an isomorphism of $\mathcal{L}(G)$-structures between $(K_1,A_1)$ and $(K_2, A_2)$. Furthermore, this latter belongs to the back-and-forth system (remark that as $\lambda(G)$ is trivial, the steps that involve it can be removed from the proof of \ref{completness}). This completes the proof of the proposition.
\end{proof}

\section{Expansion for the subanalytic structure}\label{Expansion for subanalytic structure}
\par  In this section, we consider $\Qp^{an}$ the expansion of $\Qp$ by all restricted analytic functions (in the sense of \cite{Denef-vdD}). Let $\Zp\{\overline{X}\}$ be the set of restricted power series i.e. formal power series $f(\overline{X})=\sum a_I\overline{X}^I$ with $a_I\in\Zp$ and $v_p(a_I)\rightarrow \infty$ as $|I|\rightarrow \infty$. Let 
$$\mathcal{L}_{an}=\mathcal{L}_{Mac}\cup\{ (f)_{f\in \Zp\{X_1,\cdots, X_m\},m\in \N}\}$$
 be the expansion of the Macintyre language for $p$-adically closed fields by function predicates for these series where one interprets $f\in \Zp\{X_1,\cdots, X_n\}$ in $\Qp$ by
$$f(\overline{x})=\left\{\begin{array}{ll}
	\sum a_I\overline{x}^I& \mbox{if }\overline{x}\in \Zp^m\\
	0 & \mbox{ otherwise.}
\end{array}\right.
$$
\par Let $G$ be a subgroup of $\Qp^*$. First, we remark that the case where $G$ is a dense subgroup of $1+p^n\Zp$ is beyond the scope of this paper: consider for instance the expansion of $\Qp^{an}$ by a predicate for the multiplicative group $(1+p)^\Z$. In this expansion, the ring $(\Z,+,\cdot, 0,1)$ is definable. Indeed, let $exp_p(X)=\sum \frac{X^n}{n!}$ and $log_p(1+X)=\sum\frac{(-1)^{n+1}X^n}{n}$ be the exponential and logarithm maps defined by the usual power series. Then, for all $n\in \Z$, $(1+p)^{n}=exp(log(1+p)n)$ (recall that $log(1+X)$ and $exp_p(X)$ converges on $p\Zp$ and therefore the last equality is well-defined as $v_p(\log(1+p))=1$). Therefore the map $exp_p(\log(1+p)X):\Zp\rightarrow 1+p\Zp$ establishes an isomorphism of groups between $(\Z,+)\subset \Qp$ and $((1+p)^\Z,\cdot)$ that is $\mathcal{L}_{an}$-definable - as $exp_p(\log(1+p)X)\in\Zp\{X\}$.
Therefore the ring $(\Z,+,\cdot,0,1)$ is $\mathcal{L}_{an}$-definable. Similarly, for any subgroup of $(1+p\Zp,\cdot)$, the above exponential map induces a structure of ring (isomorphic to a subring of $(\Zp,+,\cdot, 0,1))$. We do not push further this case and focus on discrete subgroups of $\Qp^*$.
\par Let $G$ be a discrete subgroup of $\Qp^*$ i.e. $G=T^\Z\times \mu_s$ for some $T\in p\Zp$ and $\mu_s<\mu_{p-1}$. We will treat the case $G:=\mu_{p-1}\cdot p^\Z$, the kernel of the Iwasawa Logarithm. The general case is similar.

\par Let $\mathcal{L}_{an}(G)=\mathcal{L}_{an}\cup\{A, \lambda, \xi\}$ where $\xi$ is interpreted by a given $(p-1)$th root of unity in $\Qp$, $\lambda$ is a unary function defined in $\Qp$ by $\lambda(x)=p^{v_p(x)}$ for all $x\in\Qp$ and $A$ is a unary predicate interpreted by $G$. Let us recall that for all $x\in \Qp^*$, there is $y_1,\cdots, y_p\in A$ with $y_{i+1}=\xi y_i$ and $v(x)=v(y_i)$ for all $i$ i.e. $x=\lambda(x)\xi^i$ for some $0\leq i<p-1$. This is a first-order statement. Therefore, if $(M,A)$ is a model of $Th(\Qp,G)$ in our language, we have a function $\lambda:M^*\rightarrow A$ such that for all $x,y\in M^*$, $v(x)=v(\lambda(x))$, $\lambda(xy)=\lambda(x)\lambda(y)$, $\lambda(p)=p$, $\lambda(\xi)=1$ and for all $a\in A$, $a=\lambda(a)\xi^i$ for some $0\leq i\leq p-1$. Then, $\lambda{M^*}$ is a subgroup of $A$ which contains $p^\Z$ and $A= \lambda{M^*}\times \mu_s$. Let $\mathcal{L}_{an}^D= \mathcal{L}_{an}\cup\{D\}$ and $\mathcal{L}_{an}^D(G):=\mathcal{L}_{an}(G)\cup \{ D\}$ where $D$ is interpreted in $\Qp$ by
$$D(x,y)=\left\{\begin{array}{ll}
	xy^{-1}& \mbox{if } v(x)\geq v(y)\\
	0 & \mbox{ otherwise.}
\end{array}\right.
$$
Let $T_d^{an}(G)$ be the theory of $\Qp$ in the language $\mathcal{L}_{an}^D(G)$. Then,
\begin{theorem}\label{EQ Log}   The theory $T_d^{an}(G)$ admits the elimination of quantifiers in the language $\mathcal{L}_{an}^D(G)$.
\end{theorem}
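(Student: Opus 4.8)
The plan is to mirror the back-and-forth proof of Theorem~\ref{discrete case} (that is, of \cite{Mariaule}, Theorems~1.1--1.2), replacing the quantifier elimination of $p$-adically closed fields used there by the quantifier elimination of Denef--van den Dries for $\Qp^{an}$ in $\mathcal{L}_{an}^D$, and checking that the extra symbol $\lambda$ (and the restricted division $D$ already present in $\mathcal{L}_{an}^D$) cause no new trouble. A first remark streamlines things: since $\Qp^*=\mu_{p-1}\times p^\Z\times(1+p\Zp)$ with $u\mapsto u^{p-1}$ injective on $1+p\Zp$, the predicate $A$ is definable from $\lambda$ — for $x\neq 0$ one has $x\in A$ iff $(x\lambda(x)^{-1})^{p-1}=1$ — so the only genuinely new datum over $\mathcal{L}_{an}^D$ is $\lambda$ together with the algebraic constant $\xi$, and in every model $(M,A)$ one has $A=\lambda(M^*)\times\mu_{p-1}$, where $\lambda(M^*)$ is a cross-section of $v$ whose image, via $v$, is the $\Z$-group $v(M^*)$. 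Since moreover the residue field of any model is $\Fp$ and the value group is a $\Z$-group, adjoining one element produces only a ramified value-group extension together with an immediate (possibly analytically produced) one.

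Concretely: fix $\kappa$-saturated models $(M,A),(N,B)\models T_d^{an}(G)$ of equal cardinality and let $\mathcal I$ be the family of $\mathcal{L}_{an}^D(G)$-isomorphisms $\phi\colon E\to F$ between substructures of size $<\kappa$; by the usual reduction (passing to fraction fields and henselisations, legitimate by \cite{Denef-vdD}) we may assume the underlying sets are henselian subfields closed under the restricted analytic functions, under $D$ and under $\lambda$, and containing $\xi$. As $T_d^{an}(G)$ is complete by definition, $\mathcal I$ is nonempty (it contains the restriction to the prime substructure), and it suffices to prove it has the back-and-forth property. Let $\phi\in\mathcal I$, $a\in M\setminus E$, and $E'$ the substructure generated by $E$ and $a$ inside $M$; we extend $\phi$ to $E'$ in two steps. \emph{Value-group step.} For each $\gamma\in v((E')^*)\setminus v(E^*)$, Presburger quantifier elimination for the $\Z$-group $v(M^*)$ and saturation of $N$ give $\delta\in v(N^*)$ realising over $v(F^*)$ the same cut and the same residue modulo every $n$ as $\gamma$ over $v(E^*)$; put $\phi(p^\gamma)=p^\delta$. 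Since the quantifier-free $\mathcal{L}_{an}^D$-type of $p^\gamma$ over $E$ is determined by the cut and residues of $\gamma$ (the $P_n$-class of $p^\gamma$ reading off $\gamma\bmod n$), the Denef--van den Dries quantifier elimination shows this extends $\phi$ to the value-group extension compatibly with the analytic structure, with $D$ and with $\lambda$; as every element of $A^M\cap E'$ has the form $\xi^i p^{v(\cdot)}$, it now lies in the domain, so no further elements of $A$ will be encountered. \emph{Immediate step.} By the analytic analogue of Claim~\ref{sub-pseudo-Cauchy} (where \cite{Denef-vdD}, resp.\ \cite{Mariaule2}, enters), the quantifier-free $\mathcal{L}_{an}^D$-type of $a$ over the $\mathcal{L}_{an}^D$-closure of $E\cdot\lambda(M^*)$ is axiomatised by a family of formulas indexed over a subfield of size $|E|$; extend $\phi$ by the value-group steps needed to cover that subfield (at most $|E|$ of them), and then realise the $\phi$-image of this type in $N$ by saturation — choosing, as in the proof of Theorem~\ref{completness} and using the smallness of $G$, a realisation $b$ outside the corresponding closure of $F\cdot\lambda(N^*)$. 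The field isomorphism $a\mapsto b$ is then an $\mathcal{L}_{an}^D(G)$-isomorphism, and henselising we land back in $\mathcal I$. This gives the back-and-forth, hence quantifier elimination.

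The hard part is the interface of the two quantifier-elimination results, in three checks: (i) a restricted analytic function extends canonically along a ramified value-group extension $E\hookrightarrow E(p^\gamma)$, the induced $\mathcal{L}_{an}^D$-structure depending only on the cut and residues of $\gamma$ — this is \cite{Denef-vdD} for a very mild extension; (ii) the small-base reduction of the immediate step, that the quantifier-free $\mathcal{L}_{an}^D$-type over the large analytic closure of $E\cdot\lambda(M^*)$ is already determined by data of size $|E|$ — the analytic analogue of Claim~\ref{sub-pseudo-Cauchy}, resting as there on such a type being governed by a pseudo-Cauchy sequence, now in the analytic sense, and without which saturation of $N$ of the chosen cardinality would not suffice; and (iii) the bookkeeping that the transported analytic structure is simultaneously compatible with $\lambda$ and $A$, which is cheap once $A$ is known to be $\lambda$-definable. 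Given these, the general discrete case $G=T^\Z\times\mu_s$ follows by the identical argument with $p^\Z$ replaced by $T^\Z$ and $\mu_{p-1}$ by $\mu_s$, the essential new datum still being a cross-section $\lambda$ of the valuation with image a copy of the $\Z$-group value group.
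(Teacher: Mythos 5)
Your overall skeleton (split the extension into a cross-section/value-group part governed by Presburger data and an immediate part governed by Denef--van den Dries, after noting that $A$ is definable from $\lambda$ and $\xi$) is the same as the paper's, but the step you label check (i) and dispose of with ``this is \cite{Denef-vdD} for a very mild extension'' is not a citation -- it is the actual content of the theorem, and it is missing from your argument. What has to be proved is: for a cross-section element $b$ with $v(b)\notin v(M)$, every $\mathcal{L}_{an}^D(G)$-term $t(b,\overline a)$ with parameters $\overline a$ in the base satisfies $v(t(b,\overline a))=v(r\,b^i)$ and $\lambda(t(b,\overline a))=\lambda(r)\,b^i$ for some $r$ in the base and $i\in\Z$, so that the conditions $t(b,\overline a)=0$, $t(b,\overline a)\in P_n$ and $t(b,\overline a)\in A$ are all decided by the cut and congruence class of $v(b)$. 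The paper proves this in three layers: for restricted power series $f=\sum_i A_i(\overline X)Y^i$ via the strong noetherian/Weierstrass property of \cite{Denef-vdD} (Section 1.4) together with the fact that the monomials $A_i(\overline a)b^i$ have pairwise distinct values because $v(b)\notin vM$; for general $\mathcal{L}_{an}^D$-terms (where the division symbol $D$ enters) via the term-structure theorem of \cite{vdD-H-M} writing $t(b,\overline a)$ as a unit times a rational function on an affinoid containing $b$; and for $\mathcal{L}_{an}^D(G)$-terms by an induction on complexity that replaces each nested occurrence of $\lambda$ applied to a term by a base parameter times a power of $b$. The quantifier elimination of Denef--van den Dries alone does not tell you that the quantifier-free $\mathcal{L}_{an}^D$-type of such a $b$ over a substructure (which is not a model) is determined by the cut and residues of $v(b)$, let alone that the new symbols $\lambda$ and $A$ are respected; since your whole back-and-forth funnels through this claim, the proof as written has its central step asserted rather than proved.

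Two secondary points. First, your appeal to ``smallness of $G$'' and to choosing the realisation $b$ \emph{outside} the closure of $F\cdot\lambda(N^*)$ is imported from case 3 of Theorem \ref{completness} (the dense case, where Mann-property and freeness bookkeeping force it) and is out of place here: smallness plays no role in the discrete case, and if $a$ itself lies in the $\mathcal{L}_{an}^D$-closure of $E\cdot\lambda(M^*)$ you must not choose $b$ outside the corresponding closure. Second, your check (ii) (an analytic analogue of Claim \ref{sub-pseudo-Cauchy}) is an artifact of organising the proof as a back-and-forth over small substructures while taking types over the full cross-section; the paper avoids it altogether by running an embedding test: it first embeds all of $\lambda(N)$ into a sufficiently saturated target by transfinite induction (one cross-section element at a time, using the analysis above), after which the remaining extension adds no new values, so $\lambda$ of any term is a parameter and the pure $\mathcal{L}_{an}^D$ quantifier elimination of \cite{Denef-vdD} finishes. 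If you keep your organisation, you would also need to address the fact that adjoining cross-section elements for the new values of $E\langle a\rangle$ can itself create further new values, so the ``value-group step'' is really a closure process, again resting on the unproved check (i).
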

\begin{Remark} Theorem \ref{discrete case} claims that $Th(\Qp, +,-,\cdot, 0, 1 , (P_n), \mu_{p-1}p^\Z,\lambda)$ admits the elimination of quantifiers. On the other hand, quantifier elimination for $p$-adic analytic structure in $\mathcal{L}_{an}^D$ is a classical result by Denef and van den Dries \cite{Denef-vdD}.
\end{Remark}
\begin{proof}
Let $\mathcal{M}$ be a $\mathcal{L}_{an}^D(G)$-structure and $\mathcal{N}$ a model of our theory which contains $M$. Let $\mathcal{M}^*$ be a saturated model expansion of $M$. We have to prove that $\mathcal{N}$ embed in $\mathcal{M}^*$ over $M$. First, we prove that the substructure generated by $M$ and $\lambda(N)$ embeds in $M^*$. Let $b\in \lambda(N)\setminus \lambda(M)$. We will denote by $M\langle b\rangle$ the structure generated by $M$ and $b$ (i.e. the set of $f(b,\overline{a})$ for all $\overline{a}\in M^n$ and for all $\mathcal{L}_{an}^D(G)$-term $f$). Then, it is sufficient to prove that $M\langle b\rangle$ embed in $M^*$ over $M$. Let $M(b)$ be the field generated by $M$ and $b$. Note that $M\langle b\rangle$ is an immediate extension of $M(b)$. Indeed, the closure of $M(b)$ under $\mathcal{L}_{an}$-terms is an immediate extension (use Weierstrass preparation theorem like in \cite{Denef-vdD} to prove that for all $\overline{y}\in M^n$, for all $f\in \Zp\{X,\overline{Y}\}$, there is $P(X)\in M[X]$ such that $v(f(b,\overline{y}))=v(P(b))$ - see also later in the proof). Then the closure under $\lambda$ and $D$ is also immediate.

\par As $b\notin \lambda(M)$, $v(M(b))\not=v(M)$. Then, $v(M(b))=vM\oplus v(b)\Z$. Without loss of generality we may assume that $v(b)>0$.

 Now, there exists $\eta\in v(M^*)$ which realizes the same type as $v(b)$ over $v(M)$ and furthermore for all $z\in M^*$ with $v(z)=\eta$ the map $b\longmapsto z$ induces an embedding of valued fields of $M(b)$ into $M^*$ (see Proposition 4.10B in \cite{Prestel}). Let $b'$ be the element of $A(M^*)$ such that $b'=\lambda(z)$ for any $z$ with $v(z)=\eta$. Then, we will prove that the map $\sigma:f(b,\overline{a})\longmapsto f(b',\overline{a})$ for all $\overline{a}\in M^k$ and all $\mathcal{L}_{an}^D(G)$-term $f$ induces an embedding of $\mathcal{L}_{an}^D(G)$-structures of $M\langle b\rangle$ into $M^*$. 

\par First, let us remark that for all $f=\sum_i A_i(\overline{X})Y^i\in \Zp\{Y,\overline{X}\}$ and for all $\overline{a}\in M^n$, either $f(Y,\overline{a})\equiv 0$ or $v(f(b,\overline{a}))=\min_i \{v(A_i(\overline{a})b^i)\}$. For by \cite{Denef-vdD} section 1.4, there is $d\in \N$ and $B_{ij}\in p\Zp\{\overline{X}\}$ such that $\|B_{ij}\|\rightarrow 0$ as $i\rightarrow \infty$ and for all $i\geq d$
$$A_i(\overline{X})=\sum_{j< d} A_j(\overline{X})B_{ij}(\overline{X})\quad (*).$$
If  $A_i(\overline{a})=0$ for all $i<d$, then $f(Y,\overline{a})\equiv 0$. Otherwise, there is $i<d$ such that $v(A_i(\overline{a}))$ is minimal among the $v(A_j(\overline{a}))$ and $i$ is the largest index less than $d$ with this property. This can be written as a first-order sentence $\mu_i(\overline{X})$. Let $\overline{x}\in \Zp^n$ such that $\Zp\vDash \mu_i(\overline{x})$. Let $y\in \Zp$ such that $v_p(y)>0$ and $v_p(A_k(\overline{x})y^k)\not=v_p(A_l(\overline{x})y^l)$ for all $k\not=l<d$ (such that $A_k(\overline{x}),A_l(\overline{x})$ are nonzero). Let $k<d$ such that $v_p(A_k(\overline{x})y^k)$ is minimal. Note that this can be expressed by a first-order formula, $\nu_k(\overline{x},y)$. Then by $(*)$, for all $i\geq d$, $v_p(A_i(\overline{x})y^i)>v_p(A_k(\overline{x})y^k)$. So, if $\theta_{i,k}=\nu_k\wedge \mu_i$,
$$\Zp\vDash \forall \overline{x} \forall y \theta_{i,k}(\overline{x},y)\rightarrow v(f(y,\overline{x}))=v(A_k(\overline{x})y^k).$$
As $v(M(b))=vM\oplus v(b)\Z$, for all $P(X)=\sum p_iX^i\in M[X]$ nonzero, $v(P(b))=\min_i\{v(p_ib^i)\}$. For, if $v(p_ib^i)=v(p_jb^j)$ then $v(p_i)=v(p_j)=\infty$ or $v(b)=\frac{v(p_i)-v(p_j)}{i-j}$ which is a contradiction with $v(b)\notin vM$. Therefore,  for all $\overline{a}\in M^n$ such that $f(Y,\overline{a})$ is nonzero $N\vDash \theta_{i,k}(\overline{a},b)$ for some $i,k$.

\par As we have seen above, for all $f\in\Zp\{Y, \overline{X}\}$ and $\overline{a}\in M^n$ either $f(Y, \overline{a})\equiv 0$ or $f(b,\overline{a})\not=0$ and $f(b',\overline{a})\not=0$. So, $f(b,\overline{a})=0$ iff $\sigma(f(b,\overline{a}))=0$. 
By \cite{Denef-vdD}, we know also that $f(b,\overline{a})\in P_n$ iff $f(b,\overline{a})\in P_n$ (for by the proof of Theorem 1.1 in \cite{Denef-vdD}, $f(b,\overline{a})\in P_n$ iff $g(b,\overline{a})\in P_n$ where $g$ is polynomial in $b$; then use the fact that $\sigma$ is an embedding of analytic valued fields). We will now prove that $f(b,\overline{a})\in A$ iff $f(b,\overline{a})\in A$. For as we have seen above, if $f(b,\overline{a})\not=0$, then $v(f(b,\overline{a}))=v(A_k(\overline{a})b^k)$ for some $k<d$. Then by definition of $\lambda$, $\lambda(f(b,\overline{a}))=\lambda(A_k(\overline{a}))b^k$. Let us remark that for all $t\in N$, $t\in A$ iff $t=\xi^i\lambda(t)$ for some $0\leq i<p$. So, $f(b,\overline{a})\in A$ iff $f(b,\overline{a})=\xi^i\lambda(f(b,\overline{a}))$  iff $f(b,\overline{a})-\xi^i\lambda(A_k(\overline{a}))b^k=0$ (note that this is a condition of the type $g(b,\overline{a}, \lambda(A_k(\overline{a})))=0$ for some $g\in \Zp\{Y, \overline{X},Z\}$) iff $f(b',\overline{a})-\xi^i\lambda(A_k(\overline{a}))b'^k=0$ iff $f(b',\overline{a})\in A$.
\par Let us now prove the same properties hold for all $\mathcal{L}_{an}^D$-terms. This follows from the above arguments and \cite{vdD-H-M}. A \emph{$M$-affinoid set} is a set of the form
$$\{x\in N\mid\ v(x-a_1)>v(\gamma_1)\wedge \bigwedge_{1<i\leq m} v(x-a_i)\leq v(\gamma_i)\},$$ 
where the balls of centre $a_i$ and radius $\gamma_i$ are two-by-two disjoint and contained in the ball of centre $a_1$ and radius $\gamma_1$. Furthermore, $a_j,\gamma_k\in M$ for all $1\leq j,k\leq m$.
 Let $t(T,\overline{X})$ be a $\mathcal{L}_{an}^D$-term. By \cite{vdD-H-M} Proposition 4.1, \cite{vdD-H-M} Corollary 3.4 and by choice of $b$, there is an affinoid set $F$ that contains $b$, a unit $E(\overline{X},\overline{Z})\in \Zp\{\overline{X},\overline{Z}\}$  and a rational function $r(Y)\in M(Y)$ such that
 $$t(b,\overline{a})=E(\overline{a},\Psi(b))r(b)$$
 where $\Psi(b)=(\frac{b-a_1}{\gamma_1},\frac{\gamma_2}{b-a_2},\cdots, \frac{\gamma_m}{b-a_m})$. So, $v(t(b,\overline{a}))=v(r(b))=v(r_ib^i)$ for some $r_i\in M$ and $i\in \Z$ and therefore $\lambda(t(b,\overline{a}))=\lambda(r_i)b^i$. One argue like above to prove that $t(b,\overline{a})\in A$ iff $t(b',\overline{a})\in A$. By Lemma 4.2 in \cite{vdD-H-M}, $t(b,\overline{a})\in P_n$ iff $t(b',\overline{a})\in P_n$.

\par Finally, it remains to observe that for all $\mathcal{L}_{an}^D(G)$-term $h(T,\overline{X})$, there is a $\mathcal{L}_{an}^D$-term $t(T,\overline{X},\overline{X'})$ such that $t(b,\overline{a})=h(b,\overline{a},\overline{a}')$ for some $\overline{a}'\in M^k$ (that depends only on $\overline{a})$. For argue by induction on the complexity of the term and use that as we have seen above $\lambda(t'(b,\overline{a}))=\lambda(h'(\overline{a}))b^i$ for all $\mathcal{L}_{an}^D$-term $t'$. Then, it follows from above that $\sigma$ is an embedding of $\mathcal{L}_{an}^D(G)$-structures. Therefore, $M\langle b\rangle$ embeds in $M^*$ and by transfinite induction $M\langle \lambda(N)\rangle$ embeds in $M^*$.
\par Let $M'=M\langle \lambda(N)\rangle$. Then, $N$ and $M^*$ contains $M'$. Let $x\in N\setminus M'$. It remains to prove that $M'\langle x\rangle$ embeds in $M^*$ over $M'$ to conclude to proof of quantifier elimination. Let us remark that $v(M'(x))=v(M')$ (by definition of $\lambda$). Then, for any term $t$ with parameter in $M'$, $\lambda(t(x))\in \lambda(M')$ . So, it can be replaced by a parameter in $M'$. In other words, it is sufficient to find an embedding of $\mathcal{L}_{an}^D$-structures. Therefore, this part follows from the quantifier elimination result in \cite{Denef-vdD}. This concludes the proof of the theorem.
\end{proof}

\section{Application: Model theory of the $p$-adic Iwasawa Logarithm}\label{Iwasawa Logarithm}
Let us remind that the $p$-adic Iwasawa logarithm is defined by $LOG(x):= \log_p(y)$ if $x=p^n\xi y\in \Qp$ with $\xi\in \mu_{p-1}$ and $y\in 1+p\Zp$.
Let us remark that as $\log_p(1+px)\in \Zp\{X\}$, $LOG$ is definable in the structure $(\Qp^{an}, \mu_{p-1}p^{\Z})$: for all $x\in \Qp, x=y(1+pz)$ for some (unique) $z\in \Zp$ and $y\in \mu_{p-1}p^{\Z}$, then $LOG(x)=log_p(1+pz)$.
In this section, we will define a sublanguage of $\mathcal{L}_{an}^D(G)$ for which the theory of $\Qp$ is model-complete following the strategy of \cite{Mariaule2}. First, we define the Weierstrass system generated by $\log(1+px)$: let $F\subset \Zp\{\overline{X}\}$ (in our case, we are interested by $F=\{\log(1+px)\}$). We define by induction on $m$ a family of rings $W^{(m)}_{F,n}\subset \Zp\{X_1,\cdots, X_n\}$. Let $W^{(0)}_{F,n}$ be the ring generated by $\Z[X_1,\cdots , X_n]$ and the elements of $F$. Assume that $W^{(k)}_{F,n}$ is defined for all $n\in \N$ and all $k\leq m$. Then $W^{(m+1)}_{F,n}$ is the ring generated by:
\begin{enumerate}
	\item $W^{(m)}_{F,n}$;
	\item $f(X_{\sigma(1)},\cdots , X_{\sigma(n)})$ for all $f\in W^{(m)}_{F,n}$ and $\sigma$ permutation of $\{1,\cdots , n\}$;
	\item $f^{-1}$ for all $f\in W^{(m)}_{F,n}$ invertible in $\Zp\{X_1,\cdots, X_n\}$;
	\item $f/g(0)$ for all $f,g\in W^{(m)}_{F,n}$ such that $f$ is divisible by $g(0)$ in $\Zp\{\overline{X}\}$;
	\item $A_0,\cdots, A_{d-1}, Q$ obtained by Weierstrass division: for all $d$ and $f,g\in W^{(m)}_{F,n+1}$  with $f$ regular of order $d$ in $X_{n+1}$, by Weierstrass division, there is $A_0, \cdots, A_{d-1}, Q$ such that
	$$g(\overline{X})=f(\overline{X})Q(\overline{X})+[A_0(\overline{X}')+\cdots+A_{d-1}(\overline{X}')X_{n+1}^d], $$
	where $\overline{X}=(X_1,\cdots , X_{n+1}), \overline{X}'=(X_1,\cdots, X_n)$. Then, we require $Q\in W^{(m+1)}_{F,n+1}$ and $A_0, \cdots, A_{d-1}\in  W^{(m+1)}_{F,n}$.
\end{enumerate}

\begin{definition} The \emph{Weierstrass system generated by $F$} is the collection of $W_{F,n}:=\bigcup_m W^{(m)}_{F,n}$ for all $n\in\N$. We denote this system by $W_F$
\end{definition}

\begin{lemma} $W_F$ is a Weierstrass system in the sense of \cite{C-L} Definition 4.3.5.
\end{lemma}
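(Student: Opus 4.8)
The plan is to run through the list of axioms that define a Weierstrass system in \cite{C-L}, Definition 4.3.5, and to match each one with the inductive construction of the rings $W^{(m)}_{F,n}$. The guiding principle is that every such axiom is a \emph{finitary} closure property: it asserts that the output of some operation applied to finitely many members of the system again belongs to the system. Since the levels form a chain $W^{(m)}_{F,n}\subseteq W^{(m+1)}_{F,n}$, the finitely many inputs of any given instance all lie in a common level $W^{(m)}$, and the relevant clause (1)--(5) of the construction places the output in $W^{(m+1)}$, hence in $W_{F,n}=\bigcup_m W^{(m)}_{F,n}$. So once the operations in the construction are matched with the axioms, the passage to the union is automatic and the bulk of the proof is bookkeeping.

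First I would dispose of the ``ambient'' conditions. Each $W^{(m)}_{F,n}$ is a ring by clause (1), the chain of levels makes $W_{F,n}$ a ring, and $W^{(0)}_{F,n}\supseteq\Z[X_1,\dots,X_n]$ gives the required polynomial ring inside $W_{F,n}$; the inclusions $W_{F,n}\subseteq W_{F,n+1}$ hold because a series in $X_1,\dots,X_n$ is a fortiori a series in $X_1,\dots,X_{n+1}$, and closure under permutation of variables is exactly clause (2). The one point with genuine content is $W_{F,n}\subseteq\Zp\{X_1,\dots,X_n\}$, i.e.\ that none of the operations (2)--(5) escapes the Tate algebra of restricted power series: this follows from the standard facts that $\Zp\{\overline{X}\}$ is stable under permutations, that the inverse of a unit of $\Zp\{\overline{X}\}$ lies in $\Zp\{\overline{X}\}$, that $f/\lambda\in\Zp\{\overline{X}\}$ whenever $\lambda\in\Zp$ divides $f$, and — the serious one — that Weierstrass division holds in $\Zp\{\overline{X}\}$ with quotient and remainder unique and lying in $\Zp\{\overline{X}\}$, this being the Weierstrass division theorem over the complete discrete valuation ring $\Zp$. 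One also needs $F=\{\log_p(1+pX)\}\subseteq\Zp\{X\}$, which is immediate since $v_p(p^n/n)=n-v_p(n)\to\infty$. Uniqueness of the Weierstrass data ensures that the elements adjoined in clause (5) are well defined and coincide with the genuine quotient and remainder, so the division operation of the system agrees with that of $\Zp\{\overline{X}\}$.

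Next I would address the division axiom itself, the heart of the definition: if $f\in W_{F,n+1}$ is regular of order $d$ in $X_{n+1}$ and $g\in W_{F,n+1}$, then the quotient $Q$ and the remainder coefficients $A_0,\dots,A_{d-1}$ of the Weierstrass division of $g$ by $f$ lie in the system — this is literally clause (5) together with the finitary remark that $f,g$ lie in a common level. Stability under inverses of units and under division by a constant dividing the given series are clauses (3) and (4). It then remains to derive from (1)--(5) any auxiliary closure properties of Definition 4.3.5 that are not literally among the clauses; the typical one, stability under the substitution $X_{n+1}\mapsto 0$, comes for free since $X_{n+1}$ is regular of order $1$ in $X_{n+1}$, so dividing $f$ by $X_{n+1}$ yields $f=X_{n+1}Q+A_0$ with $A_0=f(X_1,\dots,X_n,0)$, whence $A_0\in W_{F,n}$ by clause (5); analogous divisions by coordinate functions handle the rest.

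The main obstacle is not a single computation but the careful matching of \cite{C-L}, Definition 4.3.5 term by term with the clauses (1)--(5), checking on the one hand that nothing constructed leaves $\Zp\{\overline{X}\}$ — which reduces to the four stability facts for the Tate algebra above, the nontrivial one being Weierstrass division over $\Zp$ — and on the other hand that every axiom of the definition is either one of the clauses or a formal consequence of them. Once these two points are settled, the lemma follows.
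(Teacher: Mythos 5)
Your verification of the closure axioms is fine as far as it goes, and it matches the first sentence of the paper's proof: clauses (1)--(5) make $W_F$ closed under Weierstrass division, permutation of variables, inverses of units and division by constant terms, so it is a pre-Weierstrass system in the sense of \cite{C-L} Definition 4.3.3, and the stability of $\Zp\{\overline{X}\}$ under these operations is standard. The gap is your concluding claim that every axiom of Definition 4.3.5 is ``either one of the clauses or a formal consequence of them.'' Definition 4.3.5 contains, beyond the closure conditions, a condition (c) which is not a finitary closure property at all: every $f=\sum_I a_I\overline{X}^I\in W_{F,n}$ must admit a finite set $\mathcal{J}\subset \N^n$ and elements $g_J\in W_{F,n}\cap p\Zp\{\overline{X}\}$ such that
$$f=\sum_{J\in\mathcal{J}} a_J(1+g_J)\overline{X}^J,$$
a strong noetherian type decomposition whose data must themselves lie in the system. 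This is a statement about the internal structure of each element of $W_{F,n}$, it cannot be read off by bookkeeping with clauses (1)--(5), and proving it is where essentially all of the work in the paper's proof goes; your proposal never addresses it.

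Concretely, the paper proves (c) by induction on the number of variables. For $n=1$ one takes the largest index $d$ with $v(a_d)$ minimal, sets $\mathcal{J}=\{0,\dots,d\}$, $g_i=0$ for $i<d$ and $g_d=\sum_{j>d}a_ja_d^{-1}X^{j-d}$, and then checks --- using the clauses in an essential, non-formal way --- that the coefficients $a_i$ and the series $g_d$ actually belong to $W_F$ (closure under Weierstrass division gives closure under partial derivatives, and clause (4) allows the division by $a_d$). For the step from $n$ to $n+1$ variables one isolates a suitable maximal multi-index $I$ so that the tail satisfies $\sum_K a_{I+K}\overline{X}^{I+K}=(1+g_I)a_I\overline{X}^I$ with $g_I\in W_{F,n+1}\cap p\Zp\{\overline{X}\}$, and applies the induction hypothesis to finitely many series $f_{k,s}$ in $n$ variables obtained from $f$ by differentiating with respect to $X_k$ and evaluating at $X_k=0$, before reassembling the decomposition. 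As written, your argument establishes only the pre-Weierstrass part of the lemma, not the lemma itself.
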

\begin{proof}
From the definition of $W_F$, it is closed under Weierstrass division and therefore, it is a pre-Weierstrass system in the sense of Definition 4.3.3 \cite{C-L}. It remains to prove condition (c) in Definition 4.3.5: We have to show that for all $f=\sum_I a_I\overline{Y}^I\in W_{F,n}$ then there is a finite set $\mathcal{J}\subset \N^n$ such that for all $J\in \mathcal{J}$, there is $g_J\in W_{F,n}\cap p\Zp\{\overline{X}\}$ such that 
$$f=\sum_{J\in\mathcal{J}} a_J(1+ g_J)\overline{Y}^J\qquad (*).$$
We proceed by induction on $n$: If $n=1$, then $f(X)=a_0+a_1X+a_2X^2+\cdots$. As $f\in\Zp\{X\}$, there is $d\in \N$ such that $v(a_i)\geq v(a_d)$ for all $i<d$ and $v(a_j)>v(a_d)$ for all $j>d$.
Take $\mathcal{J}=\{1,\cdots, d\}$, $g_i=0$ for $i<d$ and $g_d=\sum_{j>d} a_ja_d^{-1}X^{j-d}$. It satisfies all the conditions required: Indeed, by definition $g_i\in p\Zp\{X\}$ for all $i$. As $a_i= \frac{\partial^i}{\partial X} f(0)/i!$, $a_i\in W_F$ for all $i$ by closure under Weierstrass division (so $W_F$ is closed under partial derivative) and condition 4. Then, $g_d=(f-[a_0+a_1X+\cdots a_{d-1}X^{d-1}])/(a_dX^d)\in W_F$ by Weierstrass division (in particular, $W_F$ is closed under composition) and  condition 4.
\par Assume now that $(*)$ is satisfied for all $g\in W_{F,n}$. Let $f=\sum_K a_K\overline{X}^K\in W_{F,n+1}$. As $a_K\rightarrow 0$ when $|K|\rightarrow \infty$, we can find $I=(i_1,\cdots, i_{n+1})$ such that 
\begin{itemize}
	\item $v(a_I)$ is minimal among the $v(a_K)$;
	\item $v(a_J)>v(a_I)$ for all $J=(j_1,\cdots, j_{n+1})$ with $j_l> i_l$ for some $l\leq n+1$ and $j_k\geq i_k$ for all $k\not=l$;
	\item $I$ is maximal for these two properties (with respect to the lexicograpical order).
\end{itemize}
Let $g_I:=\sum_{K\in\N^{n+1}\setminus\{0\}} a_{I+K}a_I^{-1}\overline{X}^{K}\in W_{F,n+1}$. Then $\sum_K a_{I+K}\overline{X}^K=(1+g_I)a_I\overline{X}^I$. Fix $k\leq n+1$ and $s\leq i_k$. By $J_{k,s}$ we denote an element of $\N^{n+1}$ with $k$th coordinate $s$. Then,
\begin{align*}
f'_{k,s}:&=\frac{1}{s!}\frac{\partial^s f}{\partial X_k^s}(X_1,\cdots, X_{k-1},0,X_{k+1},\cdots ,X_{n+1})\\
&=\sum_{J_{k,s}} a_{J_{k,s}}X_1^{j_1}\cdots X_{k-1}^{j_{k-1}}X_{k+1}^{j_{k+1}}\cdots X_{n+1}^{j_{n+1}}\in W_{F,n}.
\end{align*}
Fix some order on the couple $(k,s)$ like above. Then, let
$$f_{k,s}=f'_{k,s}-\sum_{(k',s')<(k,s)} \frac{1}{s!}\frac{\partial^s f_{k',s'}}{\partial X_k^s};$$
(in $f_{k,s}$ we only take coefficients $a_{J_{k,s}}$ in $f'_{k,s}$ that did not appear previously in some other $f_{k',s'}$ in order to avoid repetition of coefficients in next equalities).
By induction hypothesis, there are $\mathcal{J}_{k,s}$ and $g'_L\in W_{F,n}\cap p\Zp\{\overline{X}\}$ for all $L\in \mathcal{J}_{k,s}$ such that 
$$f_{k,s}=\sum_{J\in\mathcal{J}_{k,s}} a_JX_1^{j_1}\cdots X_{k-1}^{j_{k-1}}X_{k+1}^{j_{k+1}}\cdots X_{n+1}^{j_{n+1}}(1+ g'_J).$$
Take $\mathcal{J}:=\bigcup_{k,s} \mathcal{J}_{k,s}\cup\{I\}$ and $g_L:=g'_LX_k^s$ if $L\in \mathcal{J}_{k,s}$ and $g_L=0$ otherwise. Then,
$$f=\sum_{k,s} f_{k,s}X_k^s+\sum_{K\in \N^{n+1}} a_{I+K}\overline{X}^{I+K} = \sum_{J\in\mathcal{J}} a_J\overline{X}^J(1+ g_J). $$
\end{proof}	
From this, it follows from \cite{C-L} that
\begin{theorem}
The theory of $\Qp$ admits the elimination of quantifiers in the language $(+,-,\cdot, 0, 1, (P_n)_{n\in\N}, (f)_{f\in W_F}, D)$.
\end{theorem}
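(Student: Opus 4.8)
The plan is to deduce the statement directly from the general analytic quantifier-elimination machinery of Cluckers and Lipshitz \cite{C-L}, with the preceding lemma as the only nontrivial input. Recall that in \cite{C-L} one attaches to a Weierstrass system $W$ over $\Zp$ an analytic language $\mathcal{L}_W$ consisting of the ring language, a function symbol for each $f\in W_{\cdot,n}$ (interpreted on $\Qp$ by the restricted analytic function $\overline{x}\mapsto f(\overline{x})$ if $\overline{x}\in\Zp^n$ and $0$ otherwise), the binary symbol $D$ for restricted division, and the Macintyre predicates $P_n$; and one proves there that the theory of any $p$-adically closed field equipped with separated analytic $W$-structure -- in particular $Th(\Qp)$ -- eliminates quantifiers in $\mathcal{L}_W$. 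So the statement reduces to checking that $W=W_F$ is a Weierstrass system and that $\mathcal{L}_{W_F}$ is literally the language displayed in the theorem.

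First I would invoke the lemma just proved: $W_F$ is a Weierstrass system in the sense of \cite{C-L}, Definition 4.3.5. It is closed under Weierstrass division by construction, hence a pre-Weierstrass system in the sense of Definition 4.3.3 of \cite{C-L}; condition (c) of Definition 4.3.5 was verified by induction on the number of variables in the proof of the lemma; and the remaining closure requirements (permutations of the variables, inverses of units, division by nonzero constant terms) are built into the definition of the rings $W^{(m)}_{F,n}$. Since $W_{F,n}\subseteq\Zp\{X_1,\dots,X_n\}$, every element of $W_F$ is a genuine restricted power series with $\Zp$-coefficients, so the interpretation above does make $\Qp$ an $\mathcal{L}_{W_F}$-structure and a model of the complete theory in question. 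I would then apply the quantifier-elimination theorem of \cite{C-L} to $W_F$ and observe that $\mathcal{L}_{W_F}$ is exactly $(+,-,\cdot,0,1,(P_n)_{n\in\N},(f)_{f\in W_F},D)$. A small point worth recording is that the functions $\lambda$, $\xi$ and the predicate $A$ of the earlier sections play no role here: the statement concerns the pure analytic reduct generated by $\log_p(1+px)\in W^{(0)}_{F,1}$, so all terms built from it -- and hence the Iwasawa logarithm restricted to $1+p\Zp$ -- fall within the scope of the theorem.

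The main obstacle is essentially bookkeeping rather than a computation: one must make sure the abstract framework of \cite{C-L} applies verbatim, i.e. that ``Weierstrass system over $\Zp$'' in their sense is precisely what the preceding lemma establishes, that their quantifier elimination is stated for the analytic language carrying the restricted-division symbol $D$ together with the power predicates $P_n$ (rather than in a cell-decomposition formulation or in a variant language), and that no hidden completeness or Noetherianity hypothesis on the coefficient ring is being assumed. Since $\Zp$ is a complete discrete valuation ring and $W_F$ was constructed so as to satisfy all the closure axioms, these checks are immediate, and beyond the lemma and the citation there is nothing further to prove.
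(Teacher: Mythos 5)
Your proposal is correct and matches the paper's argument: the paper likewise treats the theorem as an immediate consequence of the preceding lemma (that $W_F$ is a Weierstrass system in the sense of \cite{C-L}, Definition 4.3.5) together with the quantifier-elimination theorem of Cluckers--Lipshitz for the associated analytic language with the $P_n$ predicates and the restricted-division symbol $D$. Your extra bookkeeping checks (that $W_{F,n}\subseteq\Zp\{X_1,\dots,X_n\}$ and that the language is literally the one displayed) are exactly the points the paper leaves implicit.
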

Let $W_{\log} =W_F$ for $F=\{\log_p(1+px)\}$.
A variation of the proof of Theorem \ref{EQ Log} using the above quantifier elimination shows that
\begin{cor}\label{EQ}
$Th(\Qp, +,-,\cdot, 0, 1, (P_n)_{n\in\N}, (f)_{f\in W_{\log}}, D, A, \xi, \lambda)$ admits the elimination of quantifiers.
\end{cor}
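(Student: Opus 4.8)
The plan is to rerun the quantifier-elimination test used for Theorem~\ref{EQ Log}, with exactly two substitutions: the Weierstrass system $W_{\log}$ takes the place of the full ring of restricted power series, and the quantifier-elimination theorem just established for the language $(+,-,\cdot,0,1,(P_n)_{n\in\N},(f)_{f\in W_{\log}},D)$ takes the place of the Denef--van den Dries result. Concretely, I would fix $\mathcal{M}$ a structure in the language of the corollary, $\mathcal{N}$ a model of the theory containing $M$, and $\mathcal{M}^*$ a sufficiently saturated elementary expansion of $\mathcal{M}$, and build an embedding of $\mathcal{N}$ into $\mathcal{M}^*$ over $M$ in two stages, as in Theorem~\ref{EQ Log}: first adjoin $\lambda(N)$, then adjoin the remaining elements of $N$.

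For the first stage I would take $b\in\lambda(N)\setminus\lambda(M)$, which we may assume has $v_p(b)>0$, and note that since $W_{\log}$ is a Weierstrass system, hence closed under Weierstrass division, the closure $M\langle b\rangle$ of $M(b)$ under the symbols $(f)_{f\in W_{\log}}$, $D$, $\lambda$ and $\xi$ is an \emph{immediate} extension of $M(b)$: the Weierstrass-preparation argument of \cite{Denef-vdD} applies verbatim to $W_{\log}$-data, giving for each $f\in W_{\log}$ and $\overline a\subset M$ a polynomial $P\in M[X]$ with $v(f(b,\overline a))=v(P(b))$, whence $v(M\langle b\rangle)=v(M(b))$ and $res\,M\langle b\rangle=res\,M(b)$. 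Since $b\notin\lambda(M)$ we get $v(M(b))=vM\oplus v(b)\Z$. I would then pick $\eta\in v(\mathcal{M}^*)$ realising the type of $v(b)$ over $vM$, choose $z\in M^*$ with $v(z)=\eta$ so that $b\mapsto z$ extends to an embedding of valued fields $M(b)\hookrightarrow M^*$ (Proposition~4.10B of \cite{Prestel}), set $b':=\lambda(z)$, and propose $\sigma\colon h(b,\overline a)\mapsto h(b',\overline a)$ as the candidate embedding of $\mathcal{L}$-structures.

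The heart of the argument, and the step I expect to be the main obstacle, is showing that $\sigma$ is well defined and preserves $x=0$, $P_n$ and $A$ on all terms; this is exactly where $W_{\log}$ must do the work the restricted analytic functions did in Theorem~\ref{EQ Log}. For a $W_{\log}$-function $f$ of variables $(Y,\overline X)$ I would invoke the decomposition proved in the Weierstrass-system lemma above: there is a \emph{finite} index set $\mathcal{J}$ and $g_J\in W_{\log}\cap p\Zp\{\overline X\}$ with $f=\sum_{J\in\mathcal{J}}a_J(1+g_J)Y^{j}\overline X^{J'}$, each $a_J\in W_{\log}$. Evaluating at $(b,\overline a)$ and using $v(M(b))=vM\oplus v(b)\Z$ to single out the dominant monomial --- a first-order condition on $\overline a$ --- one obtains $v(f(b,\overline a))=v(a_{J_0}(\overline a))+j_0 v(b)$ for the selected index $J_0$, exactly as in Theorem~\ref{EQ Log}; hence $\lambda(f(b,\overline a))=\lambda(a_{J_0}(\overline a))b^{j_0}$ and $f(b,\overline a)=0\iff f(b',\overline a)=0$. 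Preservation of $P_n$ follows because $\sigma$ is an embedding of valued fields and $P_n$-membership of an analytic term reduces to that of a polynomial in $b$ (by the $W_{\log}$-quantifier-elimination theorem, respectively the $P_n$-analysis of \cite{vdD-H-M}); and $f(b,\overline a)\in A$ iff $f(b,\overline a)=\xi^i\lambda(f(b,\overline a))$ for some $0\le i<p-1$, an equation of the form $t(b,\overline a)=0$ with $t$ a $W_{\log}$-term, handled as above. Terms built with $D$ require no separate treatment since $D$ already belongs to the language of the underlying quantifier-elimination theorem; a routine induction on term complexity then upgrades $\sigma$ to an embedding of $\mathcal{L}$-structures, and transfinite induction over $\lambda(N)$ embeds $M':=M\langle\lambda(N)\rangle$ into $M^*$.

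For the second stage I would take $x\in N\setminus M'$ and observe that $v(M'(x))=v(M')$ (by the definition of $\lambda$), so every $\lambda$-value of a $W_{\log}\cup\{D\}$-term with parameters in $M'$ already lies in $\lambda(M')$ and can be absorbed into the parameters; hence it suffices to extend the embedding to the reducts in the language $(+,-,\cdot,0,1,(P_n)_{n\in\N},(f)_{f\in W_{\log}},D)$ over $M'$, which is precisely the quantifier-elimination theorem proved just before this corollary. Assembling the two stages gives the embedding $\mathcal{N}\hookrightarrow\mathcal{M}^*$ over $M$, and therefore quantifier elimination. Beyond the monomial-domination analysis flagged above, everything is bookkeeping already carried out in Theorem~\ref{EQ Log}; the single genuinely new check is that each analytic input --- Weierstrass preparation, the finite decomposition of the preceding lemma, and the reduction of $P_n$-membership to polynomials --- survives the passage from all restricted power series to $W_{\log}$, and this holds precisely because $W_{\log}$ is a Weierstrass system.
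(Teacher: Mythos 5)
Your proposal is correct and is essentially the paper's own argument: the paper proves Corollary \ref{EQ} precisely by noting that the proof of Theorem \ref{EQ Log} goes through verbatim once the Denef--van den Dries quantifier elimination is replaced by the quantifier elimination just established for $(+,-,\cdot,0,1,(P_n)_{n\in\N},(f)_{f\in W_{\log}},D)$, which is what you do, using the Weierstrass-system property of $W_{\log}$ for the valuation/dominant-monomial analysis. The only point stated too quickly is the claim that terms with $D$ nested inside $W_{\log}$-functions ``require no separate treatment'': as in Theorem \ref{EQ Log} one still needs the unit-times-rational-function form of such terms on affinoid sets (available for the Weierstrass system $W_{\log}$ via the framework of \cite{C-L}, playing the role of \cite{vdD-H-M}) to control $\lambda$, $P_n$ and $A$ on them, but this is exactly the ``rerun'' you describe rather than a new idea.
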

In the above corollary, $A$ is a unary predicate interpreted by $\mu_{p-1}p^\Z$. 
\par In the above theorem, it may not be obvious to describe all the elements in $W_{\log}$. We will now give a countable language in which our theory is model-complete. The key idea is that in this language the elements of $W_{\log}$ are existentially definable. First, let us define a family of finite algebraic extensions of $\Qp$: We fix $K_n$ an finite algebraic extension of $\Qp$ such that 
\begin{enumerate}
	\item $K_n$ is the splitting field of $Q_n$ for some $Q_n\in\Q[X]$ of degree $d(n)$;
	\item $K_n=\Qp(\beta_n)$ for any $\beta_n$ root of $Q_n$ and $V_n:=\Zp[\beta_n]$ is the valuation ring of $K_n$;
	\item $K_n$ contains $K_{n-1}$ and all algebraic extension of $\Qp$ of degree less than $n$.
\end{enumerate} 
Let $\mathbf{y}\in V_n$. Then, $\mathbf{y}=y_0+y_1\beta_n+\cdots y_{d(n)-1} \beta_n^{d(n)-1}$ for some $y_i\in\Zp$. Let us remark that $\log_p(1+pX)$ is convergent on $V_n$. And,
\begin{align*}
\log_p(1+p\mathbf{y})&=\log_p\left(1+p\left(\sum y_i\beta_n^i\right)\right)\\
&= c_{0,n}(\overline{y})+c_{1,n}(\overline{y})\beta_n+\cdots c_{d(n)-1,n}(\overline{y})\beta_n^{d(n)-1}, \qquad (\alpha)
\end{align*}
for some $c_{i,n}(\overline{y})\in \Zp$. These coefficients determine functions from $\Zp^{d(n)}$ to $\Zp$. Note that by rearranging formally the power series $\log_p(1+p\sum Y_i\beta_n^i)$, one shows that $c_{i,n}(\overline{Y})\in\Zp\{\overline{X}\}$. If we identify $V_n$ with its structure of $\Zp$-module, we see that:
\begin{lemma} The structure $(V_n,+,-,\cdot, 0,1, \log_p(1+pX))$ is definable in $(\Qp,+,$ $-,\cdot, 0,1, \log_p(1+pX), (c_{i,n})_{i\leq d(n)})$.
\end{lemma}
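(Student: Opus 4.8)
The plan is to verify directly that the structure $(V_n,+,-,\cdot,0,1,\log_p(1+pX))$, after identifying $V_n$ with $\Zp^{d(n)}$ via the basis $1,\beta_n,\cdots,\beta_n^{d(n)-1}$, is coordinatised by finitely many $\mathcal{L}_{an}^D$-type data already present in $(\Qp,+,-,\cdot,0,1,\log_p(1+pX),(c_{i,n})_{i\leq d(n)})$. First I would fix the $\Zp$-module identification $\iota:V_n\to\Zp^{d(n)}$ sending $\mathbf{y}=\sum_i y_i\beta_n^i$ to $(y_0,\cdots,y_{d(n)-1})$. Under $\iota$, addition and the zero and unit of $V_n$ are visibly $\Zp$-definable (coordinatewise addition, with $1\mapsto(1,0,\cdots,0)$). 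For multiplication, the point is that $\beta_n$ satisfies $Q_n(\beta_n)=0$ with $Q_n\in\Q[X]$, so $\beta_n^{d(n)}$ — and hence every $\beta_n^k$ for $k\geq d(n)$ — is a fixed $\Q$-linear combination of $1,\beta_n,\cdots,\beta_n^{d(n)-1}$; expanding the product $(\sum_i y_i\beta_n^i)(\sum_j z_j\beta_n^j)$ and collecting powers of $\beta_n$ gives each coordinate of $\iota(\mathbf{y}\mathbf{z})$ as a fixed $\Q$-bilinear (hence $\mathcal{L}_{Mac}$-, a fortiori $\mathcal{L}_{an}^D$-) form in the $y_i,z_j$. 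These are the structure constants of the $\Q$-algebra $\Q(\beta_n)$ and are all definable parameters over $\Q\subset\Qp$.

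The remaining and essential point is the interpretation of $\log_p(1+pX)$ on $V_n$. By equation $(\alpha)$, for $\mathbf{y}\in V_n$ one has $\log_p(1+p\mathbf{y})=\sum_{i<d(n)}c_{i,n}(\overline{y})\beta_n^i$, where $\overline{y}=(y_0,\cdots,y_{d(n)-1})$ and, as noted just before the lemma, the formal rearrangement of the convergent series $\log_p(1+p\sum_i Y_i\beta_n^i)$ shows $c_{i,n}(\overline{Y})\in\Zp\{\overline{X}\}$. Therefore, once the constants $c_{i,n}$ (i.e.\ the function symbols interpreting these restricted power series) are available in the language, the map $\mathbf{y}\mapsto\log_p(1+p\mathbf{y})$ is, under $\iota$, literally the function $\overline{y}\mapsto(c_{0,n}(\overline{y}),\cdots,c_{d(n)-1,n}(\overline{y}))$ on $\Zp^{d(n)}$, which is $\mathcal{L}_{an}(c_{i,n})$-definable by construction. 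It also needs to be checked that the domain $V_n$ itself, viewed inside $\Qp^{d(n)}$, is just $\Zp^{d(n)}$ under $\iota$ — this is exactly property (2) in the choice of $K_n$, that $V_n=\Zp[\beta_n]$ is the valuation ring, so the image is a genuinely $\mathcal{L}_{Mac}$-definable set (a Cartesian power of the predicate defining $\Zp$, namely $P_2$ up to a unit, or equivalently $\{x:v(x)\geq 0\}$).

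Assembling these, one defines in $(\Qp,+,-,\cdot,0,1,\log_p(1+pX),(c_{i,n})_i)$ the set $\Zp^{d(n)}$, the coordinatewise additive structure, the bilinear multiplication coming from the structure constants of $\Q(\beta_n)/\Q$, and the tuple-valued function $(c_{i,n})_i$ playing the role of $\log_p(1+p\,\cdot\,)$; by the displayed formulas this interpreted structure is isomorphic to $(V_n,+,-,\cdot,0,1,\log_p(1+pX))$ via $\iota$, which is what the lemma asserts. I expect the only mildly delicate step to be bookkeeping: confirming that the formal rearrangement giving $c_{i,n}\in\Zp\{\overline{X}\}$ really does converge coefficientwise (it does, because $\log_p(1+pX)$ has all coefficients in $p\Zp$ and $\beta_n$ is integral, so grouping by powers of $\beta_n$ introduces only bounded-denominator rational combinations of the $\Zp$-coefficients, staying in $\Zp$), and checking the $p=2$ or wildly ramified cases do not spoil convergence of $\log_p(1+pX)$ on $V_n$ — but since $v_p(p)=1$ and $v(\beta_n)\geq 0$, the series $\log_p(1+pX)$ converges on all of $V_n$ with no exceptional behaviour, so no obstacle arises there.
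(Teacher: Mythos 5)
Your proposal is correct and follows essentially the same route as the paper, which treats the lemma as immediate from the identification of $V_n$ with $\Zp^{d(n)}$ via the basis $1,\beta_n,\dots,\beta_n^{d(n)-1}$: the domain $\Zp^{d(n)}$ and the ring operations (with rational structure constants) are definable in the pure ring language of $\Qp$, and equation $(\alpha)$ shows that $\log_p(1+p\,\cdot)$ is given coordinatewise by the functions $c_{i,n}$ available in the language. Your side remarks (definability of $\Zp$ without the $P_n$ predicates, integrality of $\beta_n$ ensuring no denominator problems, convergence of $\log_p(1+pX)$ on $V_n$) are accurate and only make explicit what the paper leaves implicit.
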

Even more is true (see \cite{Mariaule2} Proposition 4.2):
\begin{lemma} The structure $(V_n,+,-,\cdot, 0,1, \log_p(1+pX),(c_{i,n})_{i\leq d(n)})$ is definable
 in $(\Qp,+,-,\cdot, 0,1, \log_p(1+pX), (c_{i,n})_{i\leq d(n)})$.
\end{lemma}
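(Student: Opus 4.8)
The plan is to obtain this essentially for free from the previous lemma, the only new ingredient being that the functions $c_{i,n}$ must now be carried as operations on the sort $V_n$ and not merely used, internally, to define $\log_p(1+pX)$ there.

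First I would recall the identification on which the previous lemma rests: $\iota\colon\Zp^{d(n)}\to V_n$, $\overline{y}\mapsto\sum_{i<d(n)}y_i\beta_n^i$, is a bijection, and transporting the ring operations of $V_n$ along $\iota^{-1}$ turns $\Zp^{d(n)}$ into a $\emptyset$-definable ring in $(\Qp,+,-,\cdot,0,1)$: addition is coordinatewise, and multiplication is the polynomial map obtained by expanding $(\sum x_i\beta_n^i)(\sum y_j\beta_n^j)$ and rewriting each $\beta_n^k$ with $k\geq d(n)$ through $Q_n$, the resulting structure constants lying in $\Q$ and hence being definable. By the displayed identity $(\alpha)$, the logarithm $\log_p(1+p\cdot)$ transported to $\Zp^{d(n)}$ is the map $\overline{y}\mapsto(c_{0,n}(\overline{y}),\dots,c_{d(n)-1,n}(\overline{y}))$, which is quantifier-free definable once the $c_{i,n}$ are available; this last point is precisely the content of the previous lemma. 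So it remains only to exhibit, for each $i<d(n)$, a definable unary operation on $V_n$ that agrees under $\iota$ with $c_{i,n}$.

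Second, this is immediate. By construction $c_{i,n}\in\Zp\{Y_1,\dots,Y_{d(n)}\}$, so $c_{i,n}$ is literally one of the restricted-analytic function symbols already present in the ambient language $(\Qp,+,-,\cdot,0,1,\log_p(1+pX),(c_{i,n})_{i\leq d(n)})$, and its graph in $\Qp^{d(n)}\times\Qp$ is quantifier-free definable. Restricting its domain to the $\emptyset$-definable set $\Zp^{d(n)}$ and post-composing with the $\emptyset$-definable embedding $\Zp\hookrightarrow V_n$, $z\mapsto z\cdot 1_{V_n}$ (i.e. $z\mapsto(z,0,\dots,0)$ in coordinates), produces a definable map $V_n\to V_n$ equal, via $\iota$, to $c_{i,n}$. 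Collecting these for $i<d(n)$ together with the ring structure and the transported logarithm from the previous lemma yields a definable copy of the whole structure $(V_n,+,-,\cdot,0,1,\log_p(1+pX),(c_{i,n})_{i\leq d(n)})$ inside $(\Qp,+,-,\cdot,0,1,\log_p(1+pX),(c_{i,n})_{i\leq d(n)})$.

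There is accordingly no real obstacle in this statement: the substantive work lies in the identity $(\alpha)$ and in the previous lemma. The single point I would take care to make explicit is the compatibility of the two roles played by the symbols $c_{i,n}$ — as ambient analytic functions on $\Qp^{d(n)}$ and as operations on the newly defined sort $V_n$ — namely that the transported $\log_p(1+p\cdot)$ and the transported $c_{i,n}$ coincide with the genuine $p$-adic logarithm on $1+pV_n$ and with genuine coefficient extraction in the basis $1,\beta_n,\dots,\beta_n^{d(n)-1}$; this is exactly $(\alpha)$ together with the definition of the $c_{i,n}$. This is the content of \cite{Mariaule2}, Proposition 4.2.
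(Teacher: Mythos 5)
The paper offers no argument for this lemma at all --- it simply cites \cite{Mariaule2}, Proposition 4.2 --- so the real question is whether your reading of the statement captures what that citation is needed for, and I do not think it does. You interpret the new symbols $c_{i,n}$ on the $V_n$-structure as the old functions with domain $\Zp^{d(n)}$, transported through the embedding $\Zp\hookrightarrow V_n$; under that reading the lemma is indeed an immediate addendum to the previous one, which is precisely why it cannot be the intended reading (the paper flags it as ``even more is true'' and outsources it to an external proposition). The intended interpretation is that $c_{i,n}\in\Zp\{Y_0,\dots,Y_{d(n)-1}\}$, being a restricted power series with $\Zp$-coefficients, converges on the whole valuation ring $V_n$, and the structure on $V_n$ carries $c_{i,n}$ as the function $V_n^{d(n)}\to V_n$ obtained by evaluating this series at arbitrary $V_n$-tuples. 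This stronger statement is what is used afterwards: the appeal to Proposition 5.1 of \cite{Mariaule2}, making the elements of the Weierstrass system $W_F$ existentially definable, works with the analytic structure over the extensions $K_n$ and must evaluate the functions of $F$ at points of $V_n$, not merely of $\Zp$.

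Under that reading there is a genuine gap in your proposal: your restriction-plus-embedding construction produces the function supported on $\Zp^{d(n)}$, not the $V_n$-evaluation of $c_{i,n}$, and the identity $(\alpha)$ does not repair this. The identity does persist formally, so for $\overline{\mathbf{y}}\in V_n^{d(n)}$ one still has $\log_p\bigl(1+p\sum_j\mathbf{y}_j\beta_n^j\bigr)=\sum_i c_{i,n}(\overline{\mathbf{y}})\beta_n^i$, but now the ``coefficients'' $c_{i,n}(\overline{\mathbf{y}})$ lie in $V_n$, and a decomposition of an element of $V_n$ as a $V_n$-linear combination of $1,\beta_n,\dots,\beta_n^{d(n)-1}$ is not unique; so, unlike in the previous lemma (where the coordinates lie in $\Zp$ and are uniquely determined), the values of $c_{i,n}$ at $V_n$-tuples cannot simply be read off from the transported logarithm. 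Proving that these $V_n$-valued coefficient functions are nonetheless definable from the base data --- $\log_p(1+pX)$ together with the $c_{i,n}$ on $\Zp$-arguments --- is the actual content of the lemma, and it is exactly what \cite{Mariaule2}, Proposition 4.2 supplies; your closing attribution of the (indeed trivial) compatibility remark to that proposition mislocates where the work lies.
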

It follows from Proposition 5.1 \cite{Mariaule2} that
\begin{lemma} Let $F=\{\log_p(1+px), c_{i,n}(\overline{X}),n\in \N, i\leq d(n)\}$. Then for all $f\in W_F$, $f$ is existentially definable (as function from $\Zp^k$ to $\Zp$) in terms of the elements of $F$ and their derivatives.
\end{lemma}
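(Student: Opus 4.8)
The plan is to deduce the lemma from Proposition~5.1 of \cite{Mariaule2}, after checking that its hypotheses are met by the particular family $F=\{\log_p(1+pX),\ c_{i,n}(\overline{X}):n\in\N,\ i\le d(n)\}$. The underlying mechanism is an induction on the stage $m$ of the construction $W_F=\bigcup_m W^{(m)}_{F,n}$: one shows that every $f\in W^{(m)}_{F,n}$, regarded as a function $\Zp^n\to\Zp$, is existentially definable in $(\Qp,+,-,\cdot,0,1,(P_n)_{n\in\N},(g)_{g\in F})$ together with the partial derivatives of the elements of $F$. For $m=0$ this is immediate, since $\Z[X_1,\dots,X_n]$ and the elements of $F$ are given by terms of the language. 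For the inductive step one runs through the five closure operations defining $W^{(m+1)}_{F,n}$.

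Most of these operations visibly preserve existential definability. Permutation of variables is trivial. If $f\in W^{(m)}_{F,n}$ is a unit in $\Zp\{X_1,\dots,X_n\}$, its inverse $h$ is characterized pointwise by $f(\overline{x})\,h(\overline{x})=1$, so $h$ is existentially definable once $f$ is. For a quotient $f/g(0)$ with $g(0)\mid f$ in $\Zp\{\overline{X}\}$, the scalar $g(0)$ is the value at $0$ of the (inductively existentially definable) function $g$, hence $h=f/g(0)$ is pinned down by $h(\overline{x})\,g(0)=f(\overline{x})$. The substantive operation is Weierstrass division: given $f,g\in W^{(m)}_{F,n+1}$ with $f$ regular of order $d$ in $X_{n+1}$, the remainder coefficients $A_0,\dots,A_{d-1}\in W^{(m+1)}_{F,n}$ are unique and are obtained from $f,g$ by iterated partial derivatives, evaluations, and divisions by units; since closure under Weierstrass division forces closure under partial derivatives (as already used in the preceding lemma), these $A_i$ are again covered by the induction hypothesis.

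The point where a bare pointwise argument breaks down, and where Proposition~5.1 of \cite{Mariaule2} is genuinely needed, is the quotient $Q$: although $g(\overline{a})-\sum_{i<d}A_i(\overline{a}')\,a_{n+1}^i=f(\overline{a})\,Q(\overline{a})$ holds identically, on the zero locus of $f$ this identity fails to determine the value of $Q$ at a single point, so $Q$ is not read off by a naive existential formula. Proposition~5.1 of \cite{Mariaule2} carries out exactly this globalization — showing that the quotient in a Weierstrass division over the Weierstrass system generated by $F$ is existentially definable in terms of $F$ and its derivatives — provided $F$ already contains the coefficient functions $c_{i,n}$ attached to the auxiliary extensions $K_n$; and this is precisely why $F$ was defined to include all the $c_{i,n}$. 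Invoking it completes the inductive step and yields existential definability for all of $W_F$, which is the assertion; in particular, since $W_{\log}\subseteq W_F$, every element of $W_{\log}$ is existentially definable in terms of $F$ and its derivatives, as required for the model-completeness application.
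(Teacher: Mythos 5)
Your overall strategy---reduce the lemma to Proposition 5.1 of \cite{Mariaule2}---is exactly what the paper does: its proof is that single citation, with the hypotheses supplied by the two preceding lemmas, which make the structures $(V_n,+,-,\cdot,0,1,\log_p(1+pX),(c_{i,n})_{i\leq d(n)})$ definable in $(\Qp,+,-,\cdot,0,1,\log_p(1+pX),(c_{i,n})_{i\leq d(n)})$. The problem lies in the induction you wrap around that citation, specifically at the Weierstrass-division stage. The remainder coefficients $A_0,\dots,A_{d-1}$ belong to $W^{(m+1)}_{F,n}$, not to $W^{(m)}_{F,n}$, so saying they are ``covered by the induction hypothesis'' is circular; and they are not obtained from $f,g$ by iterated partial derivatives, evaluations and divisions by units. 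Already for $d=1$: dividing $g$ by $f=X_{n+1}-pa(\overline{X}')$ produces the remainder $g(\overline{X}',pa(\overline{X}'))$, a composition with an implicitly defined function, not a derivative or an evaluation at a point of the base. In general the remainder is the interpolation of $g$ at the $d$ roots (with multiplicity) of the Weierstrass polynomial attached to $f$, and those roots lie in the valuation rings $V_n$ of finite extensions of $\Qp$. To write ``$t$ is a root of $f(\overline{x}',\cdot)$'' and ``$g(\overline{x}',t)=R(\overline{x}',t)$'' existentially one must evaluate series involving $\log_p(1+pX)$ at points of $V_n$, which is precisely what the functions $c_{i,n}$ (and the definability of the $V_n$-structures) are there for. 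Hence the remainder needs the same root-in-finite-extension machinery as the quotient; your division of labour, in which everything except $Q$ is ``visibly'' existentially definable and Proposition 5.1 is invoked only to globalize the quotient, both leaves the $A_i$ unjustified and mischaracterizes the proposition, which the paper invokes for the entire closure of $W_F$ under the five operations, not just for $Q$.

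If you drop the flawed hand-treatment of the $A_i$ and cite Proposition 5.1 for the whole Weierstrass-division step (quotient and remainder alike), your argument collapses to the paper's one-line proof and is fine; as written, the step producing the $A_i$ is a genuine gap.
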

As the derivatives of $\log_p(1+px)$ are existentially definable, so are the derivatives of $c_{i,n}$ (using the equality $(\alpha)$). Then by the last lemma, the elements of $W_F$ are definable in terms of the elements of $W_F^{(0)}$. We combine this with Corollary \ref{EQ} to get
\begin{prop}\label{model-complete}
$Th(\Qp, +,-,\cdot, 0, 1, A, \xi, \lambda, \log_p(1+px), c_{i,n}(\overline{X}),n\in \N, i\leq d(n))$ is model-complete.
\end{prop}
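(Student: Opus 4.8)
The plan is to obtain model-completeness from the quantifier elimination of Corollary~\ref{EQ} by the standard criterion that it suffices to express every formula in the language $\mathcal{L}_0$ of Proposition~\ref{model-complete} as an existential $\mathcal{L}_0$-formula modulo $T_0:=Th(\Qp,\mathcal{L}_0)$. Write $\mathcal{L}_1$ for the (countable) language of Corollary~\ref{EQ}; then $\mathcal{L}_0\subseteq\mathcal{L}_1$, and $\mathcal{L}_1\setminus\mathcal{L}_0$ consists of the predicates $P_n$, the symbol $D$, and the function symbols for those members of $W_{\log}$ not already present. The crux is to show that the graphs of all $\mathcal{L}_1$-terms and the predicates $P_n$, together with their negations, are existentially definable over $\mathcal{L}_0$. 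Granting this, the argument runs as follows: given an $\mathcal{L}_0$-formula $\phi(\overline{x})$, Corollary~\ref{EQ} yields a quantifier-free $\mathcal{L}_1$-formula $\psi$ with $T_{\mathcal{L}_1}\vdash\phi\leftrightarrow\psi$; replacing in $\psi$ each subterm by a fresh existentially quantified variable constrained by the existential $\mathcal{L}_0$-definition of its graph, each atom $P_n(t)$ and each negated atom by its existential $\mathcal{L}_0$-equivalent, and passing to disjunctive normal form, produces an existential $\mathcal{L}_0$-formula $\phi'$ with $T_{\mathcal{L}_1}\vdash\phi\leftrightarrow\phi'$. The sentence $\forall\overline{x}\,(\phi\leftrightarrow\phi')$ lies in $\mathcal{L}_0$ and holds in $\Qp$, hence in $T_0$ since $T_0$ is complete; so $T_0\vdash\phi\leftrightarrow\phi'$, and $T_0$ is model-complete.

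The existential definability of the $W_{\log}$-function symbols over $\mathcal{L}_0$ has essentially been prepared in the lemmas preceding the statement. Since $W_{\log}=W_{\{\log_p(1+px)\}}\subseteq W_F$ for $F=\{\log_p(1+px),(c_{i,n})\}$, the lemma derived from Proposition~5.1 of \cite{Mariaule2} makes every $f\in W_{\log}$, viewed as a map $\Zp^k\to\Zp$, existentially definable from the elements of $F$ and their partial derivatives; the derivatives of $\log_p(1+px)$ are rational, and those of the $c_{i,n}$ are existentially definable by differentiating the identity $(\alpha)$ and extracting $\beta_n$-coordinates via the $\mathcal{L}_0$-definability of $V_n$ (Proposition~4.2 of \cite{Mariaule2}); since the elements of $F$, hence of $W_F^{(0)}$, are given by $\mathcal{L}_0$-terms, each $f\in W_{\log}$ has an existential $\mathcal{L}_0$-graph, and because $f$ is total the complement of the graph is existential over $\mathcal{L}_0$ as well. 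By structural induction the same holds for the graph of every $\mathcal{L}_1$-term and its complement, hence for all term-equations and term-inequations.

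It remains to handle $P_n$ and $D$. For $P_n$ one has $x\in P_n\Leftrightarrow\exists y\,(y^n=x)$, while $x\notin P_n$ is equivalent, modulo $T_{\mathcal{L}_1}$, to $x=0\vee\bigvee_{i\geq 2}\exists y\,(y^nt_i=x)$, where $1=t_1,t_2,\dots,t_m$ is a system of representatives for $\Qp^*/(\Qp^*)^n$ consisting of $\mathcal{L}_0$-terms (products of powers of $p$ and of $\xi$, and elements of the form $1+jp$ with $j$ a numeral); both are existential over $\mathcal{L}_0$. For $D$ the key remark is that, as $p\neq 2$, $\log_p$ is injective on $1+p\mathcal{O}_K$ and vanishes off $\mathcal{O}_K$, so that
$$w\in\mathcal{O}_K\;\Leftrightarrow\;\log_p(1+pw)\neq 0\vee w=0,\qquad w\in p\mathcal{O}_K\;\Leftrightarrow\;\big(\log_p(1+pw)\neq 0\wedge\log_p(1+w)\neq 0\big)\vee w=0,$$
both quantifier-free over $\mathcal{L}_0$; therefore, for $y\neq 0$, the relations $v(x)\geq v(y)$ — rewritten as $\exists w\,(wy=x\wedge w\in\mathcal{O}_K)$ — and $v(x)<v(y)$ — rewritten via membership of a quotient in $p\mathcal{O}_K$ — are both existential over $\mathcal{L}_0$, and the case-split defining the graph of $D(x,y)=z$ then assembles, in disjunctive normal form, into an existential $\mathcal{L}_0$-formula. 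This supplies all the ingredients needed in the first paragraph.

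The main obstacle is the first of these facts: one must carry over to the setting of $\Qp$ with the Iwasawa logarithm the analysis of \cite{Mariaule2} that renders every member of the Weierstrass system $W_{\log}$ existentially definable from the finitely many generators $\log_p(1+px),(c_{i,n})$ and their derivatives — in particular that those derivatives, and the coordinate rings $V_n$, are themselves existentially definable. Once that input is in place (and the preceding lemmas provide it), the treatment of $D$ via the logarithm trick for the valuation ring, of $P_n$ via named coset representatives, and the model-theoretic passage from quantifier elimination in $\mathcal{L}_1$ to model-completeness in $\mathcal{L}_0$ are all routine.
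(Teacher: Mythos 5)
Your proposal follows essentially the same route as the paper: model-completeness is deduced from the quantifier elimination of Corollary~\ref{EQ} combined with the lemma that every element of $W_{\log}\subseteq W_F$ is existentially definable from $\log_p(1+px)$, the $c_{i,n}$ and their derivatives, the remaining symbols $P_n$ and $D$ being handled by routine existential (co-)definitions that the paper leaves implicit. Two harmless slips to repair: under the paper's convention $0\in P_n$, so $\neg P_n(x)$ should be $\bigvee_{i\geq 2}\exists y\,(y\neq 0\wedge y^n t_i=x)$ rather than containing the disjunct $x=0$; and $\log_p(1+w)$ is not a term of the language (only $x\longmapsto \log_p(1+px)$ is), so define $w\in p\mathcal{O}_K$ instead by $\exists u\,(w=pu\wedge u\in\mathcal{O}_K)$ (or via the $P_2$-trick for the valuation ring).
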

\begin{cor}
$Th(\Qp, +,-,\cdot, 0, 1, A, \xi, \lambda, LOG, c_{i,n}(\overline{X}),n\in \N, i\leq d(n))$ is model-complete.
\end{cor}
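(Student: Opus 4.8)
The plan is to deduce this from Proposition~\ref{model-complete} by a soft transfer argument, the only arithmetic input being that $\log_p(1+pX)$ and $LOG$ are interdefinable over the common reduct $\mathcal{L}_0:=(+,-,\cdot,0,1,A,\xi,\lambda,(c_{i,n}))$. Write $\mathcal{L}_1=\mathcal{L}_0\cup\{LOG\}$, $\mathcal{L}_2=\mathcal{L}_0\cup\{\log_p(1+pX)\}$ and $\mathcal{L}^*=\mathcal{L}_0\cup\{LOG,\log_p(1+pX)\}$, and let $T_1,T_2,T^*$ be the corresponding complete theories of $\Qp$. By Proposition~\ref{model-complete}, $T_2$ is model-complete; I want to conclude that $T_1$ is, passing through $T^*$.

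First I would check that $LOG$ is $\mathcal{L}_2$-definable. For $x\neq 0$ one has $x=\lambda(x)\,\xi^{i}\,y$ with $0\le i<p-1$ and $y\in 1+p\Zp$, where $\xi^{i}$ is the unique element of $\mu_{p-1}$ with $\mathrm{res}(\xi^{i})=\mathrm{res}(x\lambda(x)^{-1})$ and $y=x\lambda(x)^{-1}\xi^{-i}$; then $y=1+pz$ with $z\in\Zp$ and $LOG(x)=\log_p(1+pz)$. Since the conditions ``$w\in\Zp$'', ``$w\in 1+p\Zp$'', ``$w\in\mu_{p-1}$'' are $\mathcal{L}_{Mac}$- (hence $\mathcal{L}_2$-) definable, the graph of $LOG$ is $\mathcal{L}_2$-definable, so $T^*$ is the definitional expansion of $T_2$ by this symbol. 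Hence $T^*$ is model-complete: any $\mathcal{L}^*$-embedding between models of $T^*$ is in particular an $\mathcal{L}_2$-embedding, so it is $\mathcal{L}_2$-elementary by model-completeness of $T_2$, so it is $\mathcal{L}^*$-elementary because every $\mathcal{L}^*$-formula is equivalent modulo $T^*$ to an $\mathcal{L}_2$-one.

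Next I would descend from $T^*$ to $T_1$. Here $\log_p(1+pX)$ is $\mathcal{L}_1$-definable: $\log_p(1+pa)=b$ holds iff either $a$ lies in the valuation ring $\mathcal{O}$ and $b=LOG(1+pa)$, or $a\notin\mathcal{O}$ and $b=0$ (this is precisely how the restricted series $\log_p(1+pX)$ is related to $LOG$). The key point, and the place where care is needed, is that this defining formula $\chi(a,b)$ can be taken equivalent, modulo $T_1$, to both an existential and a universal formula. This reduces to the classical fact that the valuation ring of a $p$-adically closed field is both existentially and universally definable in the pure ring language: for $p$ odd, $a\in\mathcal{O}\Leftrightarrow\exists y\,(y^{2}=1+pa^{2})$, while $a\notin\mathcal{O}$ is likewise existential (for $a\neq 0$ it amounts to $v_p(a^{-1})\ge 1$, and ``$v_p(z)\ge 1$'' is existential by the same Hensel-type trick); these equivalences hold in any model of $T_1$, whose ring reduct is elementarily equivalent to $\Qp$ (for $p=2$ one uses $1+8a^{2}$ in place of $1+pa^{2}$). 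As the formulas that are modulo $T_1$ simultaneously existential and universal form a Boolean algebra, $\chi$ is such a formula. Now let $f\colon M\hookrightarrow N$ be an $\mathcal{L}_1$-embedding between models of $T_1$, and expand $M,N$ to $\mathcal{L}^*$-structures $M^*,N^*$ by interpreting $\log_p(1+pX)$ via $\chi$; then $M^*,N^*\models T^*$ (every $\mathcal{L}^*$-sentence reduces, via $\chi$, to an $\mathcal{L}_1$-sentence, and $M\equiv_{\mathcal{L}_1}\Qp\equiv_{\mathcal{L}_1}N$), and $f$ is an $\mathcal{L}^*$-embedding since it preserves the graph of $\log_p(1+pX)$: being existential, $\chi$ is preserved upward by $f$, and being universal, it is preserved downward. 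By model-completeness of $T^*$, $f$ is $\mathcal{L}^*$-elementary, hence $\mathcal{L}_1$-elementary, so $T_1$ is model-complete.

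The only non-formal ingredient is the observation in the third paragraph that the valuation ring, and hence the dichotomy ``$a\in\Zp$ / $a\notin\Zp$'' that enters the translation of $LOG$ into $\log_p(1+pX)$, is definable by a formula that is at once existential and universal; everything else is routine bookkeeping about definitional expansions and reducts. I expect no genuine obstacle beyond getting this observation right (including its $p=2$ variant), and I note it uses only the ring language, so the absence of the predicates $P_n$ from $\mathcal{L}_1$ is harmless.
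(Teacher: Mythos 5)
Your argument is correct and is essentially the paper's (implicit) route: the corollary is deduced from Proposition~\ref{model-complete} by interdefinability of $LOG$ and $\log_p(1+pX)$ over $(+,-,\cdot,0,1,A,\xi,\lambda)$, using the definable decomposition $x=\lambda(x)\xi^i(1+pz)$. The extra care you take — that the translating formula is both existential and universal (via the existential/universal definability of the valuation ring), so that $\mathcal{L}_1$-embeddings lift to embeddings of the definitional expansions — is exactly the bookkeeping the paper leaves unstated, not a different method.
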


\section{Non-independence property}\label{NIP}
First, we prove that the theory of $\Qp$ in the language $\mathcal{L}_{an}^D(p^\Z)$ is NIP.
Let us recall the following result from \cite{C-S}:
\begin{theorem}[Corollary 2.5 \cite{C-S}]\label{C-S 2.5} Let $T$ be a first-order $\mathcal{L}$-theory. Let $M\vDash T$ and $A\subset M$. Let $T_P$ be the theory of $(M,A)$ where we extend the language by a predicate $P$ interpreted in $M$ by $A$. Assume that $T$ is NIP, $A_{ind}(L)$ is NIP and  $T_P$ is bounded. Then $T_P$ is NIP.
\end{theorem}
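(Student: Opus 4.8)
The strategy, following \cite{C-S}, is to reduce to bounded formulas and then control the variables ranging over the predicate $P$ separately from the ambient ones, the former via NIP of $A_{ind}$, the latter via NIP of $T$. First, recall that for a fixed splitting of the free variables into object and parameter variables the class of NIP formulas is closed under Boolean combinations; since boundedness of $T_P$ means every $\mathcal{L}_P$-formula is $T_P$-equivalent to a bounded one, it therefore suffices to prove that every bounded formula is NIP, and I would do this by induction on the number of $P$-restricted quantifier blocks. In the base case a quantifier-free bounded formula is a Boolean combination of $\mathcal{L}$-formulas (which are NIP because $T$ is) and atoms of the form $P(v)$ (trivially NIP whichever side $v$ sits on), hence NIP. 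The inductive step amounts to showing: if $\phi'(\overline x,\overline z;\overline y)$ is a bounded NIP formula, then $\phi(\overline x;\overline y)\equiv\exists\overline z\,\bigl(P(\overline z)\wedge\phi'(\overline x,\overline z;\overline y)\bigr)$ is NIP.

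For this I would argue by contradiction using the indiscernible-sequence characterisation of IP. Suppose $\phi$ has IP, so in a monster model of $T_P$ there are an $\mathcal{L}_P$-indiscernible sequence $(\overline b_i)_{i<\omega}$ and a tuple $\overline a$ with $\vDash\phi(\overline a;\overline b_i)$ exactly when $i$ is even; for even $i$ fix a witness $\overline u_i\in A$ with $\vDash\phi'(\overline a,\overline u_i;\overline b_i)$. The plan is to extract, over $\overline a$, an $\mathcal{L}_P$-indiscernible sequence from the array built out of consecutive even/odd blocks of the $\overline b$'s together with the witnesses $\overline u$'s; since $P$ and $\phi'$ are $\mathcal{L}_P$-formulas the extracted sequence retains witnesses lying in $A$ on one family of components and $\neg\phi$ on another. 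One then uses NIP of $A_{ind}$ to control the behaviour along the sequence of the $A$-valued witness components, NIP of $T$ to control the $\mathcal{L}$-behaviour of $\phi'(\overline a,-;\overline b_i)$, and finally boundedness to reassemble the full $\mathcal{L}_P$-type of $\overline a$ over the sequence out of these two pieces, at which point the truth value of $\phi(\overline a;\overline b_i)$ is forced to be eventually constant, contradicting the alternation.

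The part I expect to be the main obstacle, and which is the technical core of \cite{C-S} that I would import rather than reprove, is exactly this ``separation of variables'' for the pair: making rigorous that an $\mathcal{L}_P$-indiscernible sequence can be coordinatised so that its $M$-part and its $A$-part are governed essentially independently by the two NIP hypotheses, and checking that the external parameters $\overline a$, which act as parameters on the $A$-side, do not destroy NIP there (this is dealt with by working over a suitable small submodel, or by noting that the relevant families of traces on $A$ are restrictions of NIP families already present in $M$). A secondary subtlety is the nested $\forall\exists$ situation, which is precisely why I would peel off one $P$-restricted quantifier at a time, re-invoking all three hypotheses at each stage, rather than trying to reduce directly to the quantifier-free case.
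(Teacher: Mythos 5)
This statement is not proved in the paper at all: it is quoted verbatim as Corollary 2.5 of \cite{C-S} and used as a black box, so there is no internal argument to compare yours against. Judged as a standalone proof, your proposal has a genuine gap, and it sits exactly where you yourself locate it: the inductive step. The class of NIP formulas is closed under Boolean combinations, but it is \emph{not} closed under existential quantification, not even quantification restricted to the predicate $P$; so the claim ``if $\phi'$ is bounded and NIP then $\exists \overline z\,(P(\overline z)\wedge\phi')$ is NIP'' is not a routine peeling-off step but is essentially the whole content of the theorem. The sketch you give for it does not close the gap: the witnesses $\overline u_i\in A$ exist only at the even indices, so the array $(\overline b_{2i},\overline b_{2i+1},\overline u_{2i})$ need not be (and cannot simply be extracted to be) $\mathcal{L}_P$-indiscernible over $\overline a$ in a way that preserves both the alternation and the membership of the $\overline u$'s in $A$; and the final move, ``use boundedness to reassemble the full $\mathcal{L}_P$-type of $\overline a$ over the sequence,'' is circular, since boundedness was already spent in reducing to bounded formulas and gives no mechanism for gluing an $\mathcal{L}$-type and an $A_{ind}$-type into an $\mathcal{L}_P$-type.

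The actual argument in \cite{C-S} is not a quantifier-by-quantifier extraction of indiscernibles but goes through different machinery (honest definitions and Shelah's expansion theorem: NIP of $T$ is used to show that the structure induced on $A$ by \emph{externally} definable sets is still NIP, and boundedness is then used once to see that every $\mathcal{L}_P$-formula is captured by that induced structure together with $T$). Since you explicitly say you would ``import rather than reprove'' precisely this separation-of-variables core, your proposal in effect reduces, like the paper, to citing \cite{C-S}. That is perfectly adequate for the way the result is used here, but it means the proposal should be read as a citation plus heuristic commentary, not as a proof; and the heuristic part, taken literally, would fail at the step identified above.
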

In the above theorem, $A_{ind}(L)$ is the structure induced on $A$ by the $\mathcal{L}$-definable sets in $M$. $T_P$ bounded means that any $\mathcal{L}\cup\{P\}$-formula is equivalent to a formula of the form $Q_1 x_1\in P\cdots  Q_n x_n \in P \Psi (\overline{x},\overline{y})$ with $Q_i\in \{\exists, \forall\}$ and $\Psi$ is a $\mathcal{L}$-formula.
\par In the case of $T_d^{an}(p^{\Z})$, by our result of quantifier elimination in Section \ref{Expansion for subanalytic structure}, the theory is bounded (in the language $\mathcal{L}_{Mac}\cup\{A\}$ - as the function $\lambda$ is definable by a bounded formula). Furthermore, it is known that $T$ is NIP \cite{Haskell-Macpherson}. It remains to prove that $A_{ind}$ is NIP. For we will show that the induced structure is exactly the Presburger arithmetic.
\begin{lemma}
$(A,+_A,0_A, 1_A, (\equiv_n)_{n\in\N}, <)$ is $\mathcal{L}_{Mac}$-definable i.e. $A_{ind}$ is an expansion of Presburger arithmetic.
\end{lemma}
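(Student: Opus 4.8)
The plan is to exhibit the Presburger structure $(A, +_A, 0_A, 1_A, (\equiv_n)_{n\in\N}, <)$ on $A = \mu_{p-1}p^\Z$ as a reduct of (equivalently, something $\mathcal{L}_{an}^D(p^\Z)$-definable inside) the ambient structure, and then conversely check that no extra structure is induced. Here one takes $A$ with $A = p^\Z$ essentially (the torsion $\mu_{p-1}$ is a finite piece, handled trivially by the predicate and the constant $\xi$, so I will describe everything for the cyclic part $p^\Z$ and note that adding a finite cyclic factor changes nothing). First I would write down the definitions: the group operation $+_A$ on $p^\Z$ is multiplication of field elements, which is $\mathcal{L}_{Mac}$-definable; the identity $0_A$ is $1$; the generator $1_A$ is $p$ itself, which is $\mathcal{L}_{Mac}$-definable as the (unique up to units, but here pinned down via $\lambda$ and $v_p$) element of valuation $1$ lying in $A$ — more precisely $1_A := \lambda(p) = p$; the order $<$ on $A$ is read off from the valuation, $x <_A y$ iff $v_p(x) < v_p(y)$, and $v_p(x)\geq v_p(y)$ iff $y^2 + px^2 \in P_2$, so this is $\mathcal{L}_{Mac}$-definable; and $x \equiv_n y$ iff $xy^{-1} \in P_n \cdot (\text{torsion})$, i.e. $n \mid v_p(x) - v_p(y)$, which again is expressible with the power predicates $P_n$. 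Thus $(A, +_A, 0_A, 1_A, (\equiv_n), <)$ is visibly $\mathcal{L}_{Mac}$-definable (even without using $\lambda$, $A$, or $D$ in an essential way beyond naming the domain), so $A_{ind}$ is at least an expansion of Presburger arithmetic.

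Next I would argue the reverse inclusion: every $\mathcal{L}_{an}^D(p^\Z)$-definable subset of $A^k$ (with parameters in the big model) is already Presburger-definable on $A$. The key input is the quantifier-elimination result, Theorem~\ref{EQ Log}: every formula is equivalent to a quantifier-free $\mathcal{L}_{an}^D(G)$-formula. So it suffices to analyze, for a tuple $\overline{a} = (p^{n_1},\dots,p^{n_k}) \in A^k$, the truth value of each atomic formula $R(t_1(\overline{a}),\dots,t_r(\overline{a}))$ where $R$ is among $=$, $P_n$, the predicate $A$, and the $t_i$ are $\mathcal{L}_{an}^D(G)$-terms. The crucial point — which should be extracted from the proof of Theorem~\ref{EQ Log} — is that when a term is evaluated on a tuple of elements of $A$, it collapses to something elementary: applying $D$ to a pair $p^m, p^{m'}$ gives $p^{m-m'}$ or $0$; applying an analytic function $f \in \Zp\{\overline{X}\}$ to a tuple from $p^\Z$ gives either $0$ (when some argument is not in $\Zp$, i.e. has negative valuation) or an element of $\Zp$ of valuation computable as $\min_i\{v_p(A_i)\cdot(\text{stuff}) + \text{linear in the }n_j\}$ as in the displayed analysis $(*)$ in the proof of Theorem~\ref{EQ Log}; and $\lambda$ just records the valuation. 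Threading this through, each term $t_i(\overline{a})$ is, as a function of $\overline{n} = (n_1,\dots,n_k)$, determined on each of finitely many "regions" (cut out by linear inequalities and congruences in $\overline{n}$) by its valuation, which is an affine-linear function of $\overline{n}$ with integer coefficients; and the atomic relations $=0$, $\in P_n$, $\in A$ then become linear (in)equalities and congruence conditions on $\overline{n}$. Hence every definable subset of $A^k$ is a finite Boolean combination of Presburger-definable sets, so $A_{ind}$ is exactly Presburger arithmetic.

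The main obstacle is the bookkeeping in the second half: making precise the claim that an $\mathcal{L}_{an}^D$-term evaluated on a tuple of powers of $p$ has valuation piecewise-affine in the exponents, and that the "region" decomposition is itself Presburger. This is where I would lean hardest on the machinery already invoked in Theorem~\ref{EQ Log} — the Weierstrass-preparation/affinoid normal form from \cite{Denef-vdD} and \cite{vdD-H-M}, which writes $t(b,\overline{a}) = E(\cdots)\, r(b)$ with $E$ a unit and $r$ rational, so that $v(t(b,\overline{a})) = v(r(b))$ reduces to the polynomial case $v(r_i b^i)$ — except now the role of "$b$" is played by each coordinate $p^{n_j}$, whose valuation is the free variable $n_j$ itself. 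One subtlety worth isolating: a term may involve nested analytic functions, but once an outer analytic function receives an argument that is a power of $p$ (hence in $p\Zp$ or outside $\Zp$), the case split is again governed by a linear condition on the $n_j$, so the recursion on term complexity terminates with a finite Presburger case analysis. Since the statement only asserts that $A_{ind}$ is an \emph{expansion} of Presburger arithmetic, strictly speaking only the first paragraph is needed for the lemma as stated; I would prove the sharper "equals Presburger" fact as well, since it is exactly what is needed downstream to invoke Theorem~\ref{C-S 2.5} (Presburger arithmetic being NIP).
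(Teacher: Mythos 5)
Your first paragraph is precisely the paper's own proof of this lemma: addition on $A$ is field multiplication with $0_A=1$ and $1_A=p$, the order is read off from the ($\mathcal{L}_{Mac}$-definable) valuation comparison, and $\equiv_n$ is captured by the power predicates $P_n$ (for $x,y\in p^\Z$, $x\equiv_n y$ iff $xy^{-1}\in P_n$), so the proposal is correct and takes essentially the same route. The second half of your write-up, showing conversely that every induced set is $\mathcal{L}_{Pres}$-definable, is not needed for the lemma as stated — it is the content of the paper's subsequent theorem, which the paper proves via Lemma \ref{composition with lambda} and cell decomposition rather than a direct term analysis — but the portion addressing the stated lemma is right and matches the paper.
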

\begin{proof}
Let $(M,A, +_M,\cdot_M,0_M,1_M)$ be a model of $T_d^{an}(p^{\Z})$.
The addition in the group $A$ is given by the multiplication in the field and $0_A=1_M$, $1_A=p_M$. The order is defined by $x<y$ iff $v(x)<v(y)$ for all $x,y\in A$. Finally, note that $x\in A\cap P_n$ iff $x$ is $n$ divisible in $A$. 
\end{proof}
We denote by $\mathcal{L}_{Pres}$ the language of Presburger $(+,-,0,1,<,\equiv_n (n\in\N))$.
We will prove now that any definable set in $A_{ind}$ is $\mathcal{L}_{Pres}$-definable.

\begin{lemma}\label{composition with lambda}
Let $f=\sum a_i(\overline{Y})X^i\in\Zp\{\overline{Y},X\}$. Let $(M,A)\vDash Th(\Qp^{an}, p^{\Z})$. Then there exists $D(f)$ such that for all $\overline{y}\in M^k$ such that $f(\overline{y},X)$ is nonzero, for all but finitely many $x\in A^{\geq 0}$, $\lambda(f(\overline{y}),x)=\lambda(a_i(\overline{y}))x^i$ for some $i<D(f)$ (depending on $x$). Furthermore, for fixed $i$, the set of $x\in A^{\geq 0}$ such that $\lambda(f(x))=\lambda(a_i)x^i$ is definable using the order on $A$ (induced by the order on the valuation group).
\end{lemma}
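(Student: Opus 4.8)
The plan is to mimic the analysis already carried out in the proof of Theorem \ref{EQ Log}, where for $f=\sum_i A_i(\overline X)Y^i\in\Zp\{Y,\overline X\}$ and parameters $\overline a$ with $f(Y,\overline a)\not\equiv 0$ one shows $v(f(b,\overline a))=\min_i\{v(A_i(\overline a)b^i)\}$ once $b$ is large and lies outside the value group of the parameter field. The key external input is the relation $(*)$ from \cite{Denef-vdD} section 1.4: there is $d=d(f)\in\N$ and series $B_{ij}\in p\Zp\{\overline X\}$ with $\|B_{ij}\|\to 0$ such that $A_i(\overline X)=\sum_{j<d}A_j(\overline X)B_{ij}(\overline X)$ for all $i\ge d$. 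So I would set $D(f):=d$. First I would fix $\overline y\in M^k$ with $f(\overline y,X)\not\equiv 0$; then some $a_j(\overline y)$ with $j<d$ is nonzero, and for $x\in A^{\ge 0}$ with $v(x)$ large enough all the valuations $v(a_j(\overline y)x^j)$, $j<d$, are pairwise distinct (since $v(x)\in\Z\cdot v(p)$ grows and the finitely many differences $v(a_j(\overline y))-v(a_l(\overline y))$ are fixed), so there is a unique $i<d$ realizing the minimum $\min_{j<d}v(a_j(\overline y)x^j)$. By $(*)$, for every $i'\ge d$ we get $v(a_{i'}(\overline y)x^{i'})>v(a_i(\overline y)x^i)$ because $v(B_{i'j}(\overline y))>0$ and $v(x)>0$. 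Hence $v(f(\overline y,x))=v(a_i(\overline y)x^i)$, and therefore by definition of $\lambda$ (which sends an element to the unique power of $p$ with the same valuation, multiplicatively), $\lambda(f(\overline y,x))=\lambda(a_i(\overline y))x^i$; this holds for all but the finitely many $x\in A^{\ge 0}$ for which two of the $v(a_j(\overline y)x^j)$ coincide.

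For the last sentence, fix $i<D(f)$ and observe that the condition ``$i$ is the index $<d$ realizing $\min_{j<d}v(a_j(\overline y)x^j)$'' is, after writing $x=p^n$ i.e. $v(x)=n\cdot v(p)$ with $n=n(x)$, a finite conjunction of inequalities of the form $v(a_i(\overline y))+in\cdot v(p)\ \square\ v(a_j(\overline y))+jn\cdot v(p)$ for $j<d$, each of which is a linear inequality in $n$ with parameters in the value group. Since the order on $A^{\ge 0}$ is exactly $x<x'\iff v(x)<v(x')$ and multiplication in $A$ corresponds to addition of exponents, each such inequality defines (on $A^{\ge 0}$) either a lower segment, an upper segment, a single point, or its complement — in any case a set definable purely from the order on $A$ together with the fixed parameters $\lambda(a_j(\overline y))$. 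Intersecting over $j<d$ and removing the finite exceptional set gives the claim. I would phrase this uniformly by noting (as in the proof of Theorem \ref{EQ Log}) that ``$v(a_i(\overline y))$ is minimal among the $v(a_j(\overline y))$ and $i$ is the largest such index'' is first-order over $M$ in $\overline y$, so the threshold beyond which the clean behaviour holds is itself definable, and the resulting description of the set of good $x$ uses only $<$ on $A$.

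The main obstacle is bookkeeping rather than anything deep: making sure that ``all but finitely many $x$'' is uniform in $\overline y$ only in the weak sense the statement requires (the finite exceptional set may depend on $\overline y$), and checking that the passage from $f\in\Zp\{\overline Y,X\}$ evaluated at field parameters $\overline y\in M^k$ (not just in $\Zp^k$) to the relation $(*)$ is legitimate — but this is exactly the reduction used in Theorem \ref{EQ Log}, where the sentences $\mu_i(\overline X)$ and $\nu_k(\overline X,Y)$ transfer the $\Zp$-statement to $M$ by elementarity. One subtlety to flag: $f(\overline y,X)$ is the interpretation of the function predicate, which is $0$ unless the arguments lie in $\mathcal O_M$; since $x\in A^{\ge 0}\subset\mathcal O_M$ and we may restrict $\overline y$ to the locus where $f(\overline y,X)\not\equiv 0$, this causes no trouble, but it should be stated. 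Everything else is the $\min$-of-valuations computation already in hand.
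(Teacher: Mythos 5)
Your proposal is correct and takes essentially the same route as the paper's proof: a uniform bound $D(f)$ extracted from the Denef--van den Dries relation $(*)$ (equivalently their Lemma 1.4), the observation that a coincidence $v(a_i(\overline{y})x^i)=v(a_j(\overline{y})x^j)$ pins down $v(x)$ and hence excludes only finitely many $x\in A$, and the description of the index-$i$ locus by finitely many order inequalities in $v(x)$, i.e.\ an interval in $A$. The only delicate point, the fact that in a nonstandard model the tail of the series cannot be estimated termwise but must be controlled by transferring a first-order statement verified in $\Zp$ (the $\theta_{i,k}$-implication from the proof of Theorem \ref{EQ Log}), is exactly how the paper proceeds, and you flag and invoke precisely that mechanism.
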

\begin{proof} Let $\overline{y}$ such that $f(\overline{y},X)$ is nonzero. Then there is $i$ such that $v(a_i(\overline{y}))<\infty$.
Let $d=d(\overline{y})$ be the largest index such that $v(a_d(\overline{y}))$ is minimal among the $v(a_i(\overline{y}))$. This number exists and is bounded uniformly over $\overline{y}$ by some $D(f)\in\N$  (see Lemma 1.4 in \cite{Denef-vdD}). Then for all $x\in A^{>0}$ and $i>d$, $v(a_i(\overline{y})x^i)>v(a_d(\overline{y})x^d)$. Let $P(X)=a_0(\overline{y})+\cdots +a_d(\overline{y})X^d$. First, let us remark that if for all $i\not=j$ $v(a_i(\overline{y})x^i)\not= v(a_j(\overline{y})x^j)$ then $v(P(x))=\min_i\{v(a_i(\overline{y})x^i)\}$. In this case, $\lambda(P(x))=\lambda(a_i(\overline{y})x^i)=\lambda(a_i(\overline{y}))\lambda(x)^i=\lambda(a_i(\overline{y}))x^i$ for the index $i$ such that $v(a_i(\overline{y})x^i)$ reaches the minimum. Then, $v(f(x))=v(P(x)+\sum_{k>d}a_k(\overline{y})x^k)=v(P(x))$ (this is first-order and true in $\Zp$ for all parameters $(\overline{y}',x')$ with the same properties, so this holds in $M$). So, $\lambda(f(x))=\lambda(a_i(\overline{y}))x^i$ for some $i\leq d(\overline{y})\leq D(f)$.
\par Assume that $v(a_i(\overline{y})x^i)=v(a_j(\overline{y})x^j)<\infty$ for some $j<i \leq D(f)$. Then, $v(a_i(\overline{y}))-v(a_j(\overline{y}))=(i-j)v(x)$. This equality can be satisfied by a unique $x\in A$ (as all $x\in A$ have distinct valuation). So , for all $x\in A$ except finitely many point, either $a_i(\overline{y})=0$ for all $i<D(f)$ (so, $f(\overline{y},X)\equiv 0$) or there is a unique $i<D(f)$ such that $v(a_i(\overline{y})x^i)$ is minimal. This complete the proof of the first part of the lemma. 
\par Let $i<D(f)$. Let us proof that the set of $x\in A$ such that $v(a_i(\overline{y}))$ is the unique minimum is definable using the order. If $f(\overline{y},X)\equiv 0$, then any $x\in A$ satisfies the condition $\lambda(f(\overline{y},x))=\lambda(a_i(\overline{y})x^i)=\lambda(0):=0$. Otherwise, the minimum is reached. Let us remark that $v(a_i(\overline{y})x^i)$ is the unique minimum among the $v(a_k(\overline{y})x^k)$  iff $(v(a_k(\overline{y}))-v(a_i(\overline{y})))/(k-i)>v(x)$ for all $i<k<D$ and $(v(a_i(\overline{y}))-v(a_k(\overline{y})))/(i-k)<v(x)$ for all $i>k$. So, our condition is equivalent to an interval (possibly with boundary to infinity) in $A$.
\end{proof}
\begin{Remark} Like in the above lemma, if $h$ is a $\mathcal{L}_{an}^D$-term and $\overline{y}\in M^n$ then for all $a\in A$ except finitely many, $\lambda(h(a,\overline{y}))=\lambda(r(\overline{y}')a^i)$ for some $\overline{y}'\in M^{n'}$, $r$ rationnal function and $i\in \Z$. For proceed like in the proof of Theorem \ref{EQ Log}, to prove that $h(a,\overline{y})$ is the product between a unit and a rational function. Then we argue like in the above lemma where $P$ is now a rational function.
\end{Remark}

\begin{theorem} Any definable set in $A_{ind}$ is $\mathcal{L}_{Pres}$-definable.
\end{theorem}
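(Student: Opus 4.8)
The plan is to use the quantifier elimination of Section~\ref{Expansion for subanalytic structure} to reduce to atomic formulas, and then to show that the trace on $A^n$ of each atomic relation is controlled by piecewise affine valuation data and hence is $\mathcal{L}_{Pres}$-definable. Throughout I identify $A$ with its image under $v_p$, so that for $\overline x=(p^{u_1},\dots,p^{u_n})\in A^n$ and $I\in\Z^n$ one has $\overline x^{I}=p^{\langle I,\overline u\rangle}$ with $\langle I,\overline u\rangle$ an affine function of $\overline u$, and the order and group structure on $A$ make it a model of Presburger arithmetic (by the previous lemma). By Theorem~\ref{EQ Log} (whose proof applies verbatim to $G=p^{\Z}$, in which case the symbol $\xi$ is absent), every $\mathcal{L}_{an}^D(p^{\Z})$-formula is equivalent to a boolean combination of atomic formulas, and boolean combinations of $\mathcal{L}_{Pres}$-definable subsets of $A^n$ are again $\mathcal{L}_{Pres}$-definable; so it suffices to prove that for every $\mathcal{L}_{an}^D(p^{\Z})$-term $t(\overline X,\overline Y)$ and every $\overline y$ the sets $\{\overline x\in A^n\mid t(\overline x,\overline y)=0\}$, $\{\overline x\in A^n\mid t(\overline x,\overline y)\in P_m\}$ and $\{\overline x\in A^n\mid t(\overline x,\overline y)\in A\}$ are $\mathcal{L}_{Pres}$-definable, uniformly in $\overline y$.

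Since $\lambda$ is a homomorphism that is the identity on $A$, and since $z\in A$ iff $z=\lambda(z)$, once we know (next paragraph) that on a suitable cell $\lambda(t(\overline x,\overline y))=\lambda(c)\,\overline x^{I}$ with $c$ depending only on $\overline y$, the condition $t(\overline x,\overline y)\in A$ becomes the equation $t(\overline x,\overline y)=\lambda(c)\,\overline x^{I}$; thus the third case reduces to the first, and we are left with the equation $t=0$ and the predicate $P_m(t)$. The key point is a uniform multivariate version of Lemma~\ref{composition with lambda}: for every term $t(\overline X,\overline Y)$ there is a finite partition of $A^n$ into $\mathcal{L}_{Pres}$-definable cells $C_1,\dots,C_r$, with defining formulas uniform in $\overline y$, and for each $j$ a monomial exponent $I_j\in\Z^n$ and a coefficient $c_j(\overline Y)$ extracted from $t$ such that, for all $\overline y$ and all $\overline x\in C_j$, either $t(\overline x,\overline y)=0$ (a condition on $\overline y$ alone) or $v(t(\overline x,\overline y))=v(c_j(\overline y))+\langle I_j,\overline u\rangle$ and $\lambda(t(\overline x,\overline y))=\lambda(c_j(\overline y))\,\overline x^{I_j}$.

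To prove this one first reduces the analytic content of $t$ to a rational one exactly as in the proof of Theorem~\ref{EQ Log} (Weierstrass preparation together with \cite{Denef-vdD} \S1.4 and \cite{vdD-H-M} Proposition~4.1 and Corollary~3.4): on a $\overline y$-definable affinoid neighbourhood, $t(\overline x,\overline y)$ is a unit times a rational function $r(\overline x)=\sum_{K}a_K(\overline y)\overline x^{K}$ with only finitely many relevant exponents $K$, by a uniform bound $D(t)$. Then $v(t(\overline x,\overline y))=v(r(\overline x))$, and one partitions $A^n$ according to the relative order of, and the equalities among, the values $v(a_K(\overline y))+\langle K,\overline u\rangle$: each such comparison is an affine (in)equality or congruence in $\overline u$ over the value group, hence $\mathcal{L}_{Pres}$. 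On the generic part of each piece a single monomial strictly dominates, which gives the displayed formula with $c_j=a_{I_j}$. The finitely many exceptional sub-pieces where two or more monomials tie are handled by the recursion of Lemma~\ref{composition with lambda}: a tie forces a $\overline u$-independent relation between the competing coefficients, so any further cancellation is a condition on $\overline y$ alone, and after finitely many such steps either one monomial dominates or $t$ vanishes identically on the sub-piece.

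Granting the multivariate lemma, on a cell with $t\not\equiv0$ we have $v(t(\overline x,\overline y))<\infty$, hence $t(\overline x,\overline y)\neq0$ there, so $\{t=0\}\cap A^n$ is the union of the cells on which $t\equiv0$, an $\mathcal{L}_{Pres}$-definable set. For $P_m(t(\overline x,\overline y))$: on a cell with $t\not\equiv0$ write $t=\lambda(c_j(\overline y))\,\overline x^{I_j}\cdot w$ with $v(w)=0$; then $t\in P_m$ iff $m\mid v_p(\lambda(c_j(\overline y)))+\langle I_j,\overline u\rangle$, an $\mathcal{L}_{Pres}$-congruence in $\overline u$, together with a condition on the unit $w$; the latter, by the reduction used in the proof of Theorem~\ref{EQ Log} ($t\in P_m$ iff $g\in P_m$ for a rational $g$, then Lemma~4.2 of \cite{vdD-H-M}) and the fact that on the cell $w$ lies in $1+\mathfrak m^{g(\overline u)}$ for a piecewise affine $g$, becomes after a further $\mathcal{L}_{Pres}$ sub-partition a condition on $\overline y$ and on the residue of $\langle I_j,\overline u\rangle$ modulo $m$; so it is $\mathcal{L}_{Pres}$-definable. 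Taking boolean combinations over the cells and over the atomic constituents of the original formula completes the proof. I expect the main obstacle to be the multivariate lemma of the third paragraph, in particular the recursive treatment of the degenerate sub-pieces where monomials tie, together with the bookkeeping needed to push the defining conditions of $P_m$ through the analytic-to-rational reduction while keeping everything $\mathcal{L}_{Pres}$ in $\overline u$.
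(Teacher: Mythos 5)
Your strategy is genuinely different from the paper's, and as written it has a real gap at its crux. The paper does \emph{not} carry out a multivariate analysis at all: it proves only the one-variable statement (definable subsets of $A$ itself are $\mathcal{L}_{Pres}$-definable), which makes $Th(A_{ind})$ $\mathcal{L}_{Pres}$-minimal, and then invokes Theorem 6 of \cite{Cluckers} to pass from dimension $1$ to all powers $A^n$. You are in effect trying to reprove that transfer by hand, and everything hinges on your ``uniform multivariate version of Lemma \ref{composition with lambda}'', which you only sketch. Two specific points do not go through as stated. First, the mechanism that makes the paper's one-variable lemma work --- a tie $v(a_i(\overline y)x^i)=v(a_j(\overline y)x^j)$ pins down $v(x)$, hence (distinct elements of $A=p^{\Z}$ having distinct valuations) at most one point of $A$ --- has no analogue in $A^n$: the tie locus $v(a_I(\overline y))+\langle I,\overline u\rangle=v(a_J(\overline y))+\langle J,\overline u\rangle$ is an infinite affine stratum, so your ``finitely many exceptional sub-pieces'' is false as stated. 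On the exact tie stratum the collapse you describe does work (because $\overline x^{J-I}$ is a fixed power of $p$ there), but the $P_m$-analysis also has to deal with the \emph{near-tie} regions where the valuation gap between the dominant and a subdominant monomial is positive but smaller than the constant $K(m)$: there $t\in P_m$ is not determined by $v(t)$ and $\lambda(t)$, and your claim that the unit $w$ contributes only ``a condition on $\overline y$ and on the residue of $\langle I_j,\overline u\rangle$ modulo $m$'' needs a further bounded-depth refinement (partitioning by the tuple of gaps truncated at $K(m)$, on each piece of which $w$ is constant modulo $1+p^{K(m)}\Zp$); this is exactly the bookkeeping that the paper's $n=1$ proof handles with the $K(n)$ trick plus finitely many exceptional points, and it is omitted in your sketch. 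Second, the analytic-to-rational reduction you import from the proof of Theorem \ref{EQ Log} (via \cite{Denef-vdD} \S1.4 and \cite{vdD-H-M}) is stated and used there for \emph{one} distinguished variable ranging over an affinoid; making it uniform in $n$ variables from $A$ simultaneously, with a single bound $D(t)$ and $\mathcal{L}_{Pres}$-definable pieces, is a nontrivial additional step (essentially an analytic cell decomposition) that you assert rather than prove.

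The repair is either to carry out this multivariate Presburger cell decomposition in full (a substantial piece of work, amounting to the content of Cluckers' theorem in this special case), or to do what the paper does: keep only the one-variable part of your argument --- which is essentially Lemma \ref{composition with lambda} together with the case analysis for $f(x)=0$, $f(x)\in A$ and $f(x)\in\mu P_n$ --- conclude $\mathcal{L}_{Pres}$-minimality, and cite Theorem 6 of \cite{Cluckers} to obtain the statement for every $A^n$.
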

\begin{proof}
First, we show that the definable subsets of $A$ are $\mathcal{L}_{Pres}$-definable. Let $X$ be such a set. By quantifier elimination, $X$ is boolean combination of sets of the type:
$$\{x\in A\mid\ f(x)\in \mu P_n\}\qquad (1)$$
$$\{x\in A\mid f(x)=0\}\qquad (2)$$
$$\{x\in A\mid\ f(x)\in A\}\qquad (3)$$
where $f$ is a term of the language $(+,-,\cdot,0, 1, \xi, \lambda, A, (P_n)_{n\in \N}, D, (g)_{g\in\Zp\{\overline{X}\}})$ (we allow parameters from $M$). By Lemma \ref{composition with lambda} and the remark after, there is a finite union of intervals and points such that on each interval $I$, the set $X\cap I$ is equivalent to a definable set $X'$ where any term that appears in the definition of $X'$ is a term where the function $\lambda$ applies only on the parameters. So, we may assume that $X$ is defined by terms of the type $f(X,\overline{y})$ where $f$ is a $\mathcal{L}_{an}^D$-term and $\overline{y}\in M^n$. We may assume that $f(X,\overline{y})$ is not identically zero (otherwise, the above sets are trivially $\mathcal{L}_{Pres}$-definable).
\par The case (2) is a finite set. In case (3), by Lemma \ref{composition with lambda}, we see that $\lambda(f(x,\overline{y}))=\lambda(a_i(\overline{y}))x^i$ for some $i$ for all but finitely many $x\in A$. Furthermire, $f(x,\overline{y})\in A$ iff $\lambda(f(x,\overline{y}))=f(x,\overline{y})$. Then $f(x,\overline{y})=\lambda(a_i(\overline{y}))x^i$. Then, $f(x,\overline{y})\in A$ iff $x$ is a root of $f(X)-\lambda(a_i(\overline{y}))X^i$. If this later series is not identically zero, it has finitely many roots in $A$ (in particular the set of such $x$ is definable). Otherwise, the series is identically zero and therefore any points $x\in A$ such that $v(a_i(\overline{y})x^i)$ is minimal belongs to (3). This is a $\mathcal{L}_{Pres}$-definable condition by Lemma \ref{composition with lambda}. 
\par Finally, in the case (1), as by Lemma \ref{composition with lambda} the $\lambda$ function occurs only in the parameters, we can assume by analytic cell decomposition \cite{Cluckers2} that the set is of the type
$$\{x\in A\mid v(\alpha)\square_1 v(x-c)\square_2 v(\beta)\quad x-c\in\mu P_n\}, $$
for some $\alpha, \beta,c\in M$, $\mu,n\in \N$. The first condition $v(\alpha)\square_1 v(x-c)\square_2 v(\beta)$ clearly determines a finite union of intervals (possibly with bounds to infinity) in $A$ and points: for all $t\in A$, if $t\not=\lambda(c)$ then $v(t-c)=\min\{v(t),v(c)\}$. So, for all $t$ such that $v(t)>v(c)$, $t$ satisfies the first condition iff $v(\alpha)\square_1 v(c)\square_2 v(\beta)$. If $v(t)<v(c)$ then $t$ satisfies the first condition iff $v(\alpha)\square_1 v(t)\square_2 v(\beta)$ which is an interval. As there is only one $t\in A$ such that $v(t)=v(c)$ this last case is equivalent to $v(\alpha)\square v(\lambda(c)-c)\square v(\beta)$: either $\lambda(c)$ is in $X$ or not. So, the first condition is $\mathcal{L}_{Pres}$-definable.
\par It remains to prove that the condition $x\in A\wedge x-c\in\mu P_n$ is $\mathcal{L}_{Pres}$-definable. Let us recall that for all $z\in \Qp$, $z\in \mu P_n$ iff $n$ divides $v_p(\mu^{-1}z)$ and $\mu^{-1}zp^{-v(\mu^{-1}z)}\in P_n$. So, for all $x\in A$, $x-c\in\mu P_n$ iff $n$ divides $v(\mu^{-1}(x-c))$ and  $\mu^{-1}(x-c)\lambda(\mu^{-1}(x-c))^{-1}\in P_n$. We split this condition into the three possible cases according to whether or not $v(x)$ is larger than $v(c)$. Then we will show that each case is $\mathcal{L}_{Pres}$-definable. The three possibilities are:
\begin{enumerate}[(a)]
	\item $v(x)<v(c)$, $n$ divides $v(\mu^{-1}x)$ and $\mu^{-1}\lambda(\mu)- \mu^{-1}\lambda(\mu)cx^{-1}\in P_n$;
	\par Indeed, in the case where $v(x)<v(c)$, $v(\mu^{-1}(x-c))=v(\mu^{-1}x)$. So, $n$ divides $v(\mu^{-1}(x-c))$ iff $n$ divides $ v(\mu^{-1}x)$. On the other hand, $\lambda(\mu^{-1}(x-c))=\lambda(\mu^{-1})\lambda(x)=\lambda(\mu^{-1})x$. Therefore, in this case $x-c\in\mu P_n$,	 is equivalent to the above relation.
	\item $v(x)>v(c)$, $n$ divides $v(\mu^{-1}c)$ and $\mu^{-1}x\lambda(\mu^{-1}c)^{-1}-\mu^{-1}c\lambda(\mu^{-1}c)^{-1}\in P_n$;
	\item $x=\lambda(c)$ and $c-\lambda(c)\in \mu P_n$.
\end{enumerate}
\par In case (a), $n$ divides $v(\mu^{-1}x)$ holds iff $\lambda(\mu)\equiv_n x$: this is $\mathcal{L}_{Pres}$-definable. Then let us recall that for all $n$, there is $K(n)$ such that for all $a,b\in \Qp$ with $v(b)>K(n)$, $a(1+b)\in P_n$ iff $a\in P_n$. Apply this with $a=\mu^{-1}\lambda(\mu)$ and $b=-cx^{-1}$. Therefore, for all $x\in A$ with $v(x)<v(c)-K(n)$, the second condition of (a) holds iff $\mu^{-1}\lambda(\mu)\in P_n$: this is independent of $x$ and so is $\mathcal{L}_{Pres}$-definable. Finally, for all $x\in A$ with $v(c)-K(n)\leq v(x)<v(c)$ either $x\in X$ or $x\notin X$. As there is only finitely many $x$ that satisfies the condition $v(c)-K(n)\leq v(x)<v(c)$, the set of such $x\in X$ corresponds to a finite union of points.
\par The case (b) is similar: the first condition becomes $n$ divides $v(\mu^{-1}c)$ and so does not depends on $x$. The second condition is equivalent for $v(x)>v(c)+K(n)$ to $\mu^{-1}c\lambda(\mu^{-1}c)^{-1}\in P_n$. And for $v(x)<x\leq v(c)+K(n)$ it is satisfied by finitely many $x\in A$. Finally, case (c) does not depend on $x$.

We regroup all these conditions and we see that $X$ is indeed $\mathcal{L}_{Pres}$-definable.
\par This proves that $Th(A_{ind})$ is $\mathcal{L}_{Pres}$-minimal (in the the sense of \cite{Cluckers}). So by Theorem 6 in \cite{Cluckers}, any definable set of $A^n$ is $\mathcal{L}_{Pres}$-definable.

\end{proof}
We combine this theorem with Theorem \ref{C-S 2.5} to obtain:
\begin{cor}The theory of $\Qp$ in the language $\mathcal{L}_{an}^D(p^{\Z})$ is NIP.
\end{cor}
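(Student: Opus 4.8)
The plan is to verify the three hypotheses of Theorem \ref{C-S 2.5} for the pair $(\Qp, p^{\Z})$ and then invoke it directly. Put $T := Th(\Qp)$ in the language $\mathcal{L}_{an}^D$, take $M := \Qp$ and $A := p^{\Z} \subseteq M$; then the structure whose theory we want to analyse is exactly $(M,A)$ with the predicate $P$ interpreted by $A$, so that $T_P = T_d^{an}(p^{\Z})$. It suffices to check: \emph{(a)} $T$ is NIP; \emph{(b)} $T_P$ is bounded; and \emph{(c)} the induced structure $A_{ind}$ is NIP.

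Part \emph{(a)} is a theorem of Haskell and Macpherson \cite{Haskell-Macpherson}: the theory of $\Qp$ with restricted analytic functions in the Denef--van den Dries language is NIP. For part \emph{(b)}, I would start from the quantifier elimination of Theorem \ref{EQ Log}, which says that every formula of $T_d^{an}(p^{\Z})$ is equivalent to a quantifier-free formula of $\mathcal{L}_{an}^D(G) = \mathcal{L}_{an}^D \cup \{A,\lambda,\xi\}$. Since $\xi$ is a constant and, as noted at the start of this section, $\lambda$ is definable by a bounded $\mathcal{L}_{an}^D \cup \{A\}$-formula, every such formula is in turn equivalent to one of the shape $Q_1 x_1 \in P \cdots Q_n x_n \in P\, \Psi(\overline{x},\overline{y})$ with $Q_i \in \{\exists,\forall\}$ and $\Psi$ an $\mathcal{L}_{an}^D$-formula; that is, $T_P$ is bounded.

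For part \emph{(c)}, I would simply appeal to the preceding theorem, which shows that every subset of $A^{n}$ definable in $A_{ind}$ is $\mathcal{L}_{Pres}$-definable. Hence $A_{ind}$ is a reduct of Presburger arithmetic $(\Z,+,-,0,1,<,\equiv_n\ (n\in\N))$, which is NIP (see \cite{Simon}), and since a reduct of an NIP structure is NIP, $A_{ind}$ is NIP. Combining \emph{(a)}--\emph{(c)}, Theorem \ref{C-S 2.5} yields that $T_P = Th(\Qp,\mathcal{L}_{an}^D(p^{\Z}))$ is NIP, which is the assertion of the corollary. As an immediate consequence, since $LOG$ is definable in this structure, $Th(\Qp, LOG)$ is NIP as well.

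There is essentially no obstacle left at this stage: the substantive work — the identification of $A_{ind}$ with Presburger arithmetic (resting on the analytic machinery of \cite{Denef-vdD}, \cite{vdD-H-M}, \cite{Cluckers} together with Lemma \ref{composition with lambda}) and the quantifier elimination of Theorem \ref{EQ Log} — has already been carried out. The only points to be careful about in writing the proof are that the quantifier elimination genuinely produces bounded formulas (i.e.\ that $\lambda$ and $\xi$ cost only bounded quantifiers over $A$) and that NIP transfers to reducts, both of which are routine.
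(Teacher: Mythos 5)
Your proposal is correct and follows essentially the same route as the paper: it combines Theorem \ref{C-S 2.5} with NIP of the analytic $p$-adic theory \cite{Haskell-Macpherson}, boundedness coming from the quantifier elimination of Theorem \ref{EQ Log} (with $\lambda$ bounded-definable and $\xi$ a constant), and NIP of $A_{ind}$ via the preceding theorem identifying its definable sets with $\mathcal{L}_{Pres}$-definable ones. No gaps to report.
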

\begin{cor} The theory of $(\Qp, LOG)$ is NIP.
\end{cor}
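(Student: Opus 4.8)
The plan is to exhibit $(\Qp, LOG)$ as a reduct of the structure $(\Qp,\mathcal{L}_{an}^D(p^\Z))$ — the theory of which has just been shown to be NIP in the preceding corollary — and then to invoke the standard fact that NIP is inherited by reducts: if every subset of $\Qp^n$ definable (with parameters) in a structure $\mathcal{N}$ on the domain $\Qp$ is already definable in a structure $\mathcal{M}$ on $\Qp$, and $Th(\mathcal{M})$ is NIP, then so is $Th(\mathcal{N})$.

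I would begin by recalling the observation of Section~\ref{Iwasawa Logarithm}: every $x\in\Qp^*$ has a unique factorization $x=y\,(1+pz)$ with $y\in G:=\mu_{p-1}\cdot p^\Z$ and $z\in\Zp$, and $LOG(x)=\log_p(1+pz)$ (with the convention $LOG(0):=0$). What must be checked is that each piece of this description is definable in $\mathcal{L}_{an}^D(p^\Z)$. The group $G$ is definable, since $\mu_{p-1}=\{y:y^{p-1}=1\}$ is finite and hence $\mathcal{L}_{Mac}$-definable, while $p^\Z$ is the distinguished predicate $A$; the set $1+p\Zp=\{y:v_p(y-1)\geq 1\}$ is $\mathcal{L}_{Mac}$-definable; and since $\mu_{p-1}\cap(1+p\Zp)=\{1\}$, the principal unit $1+pz$ is precisely the unique element $w$ of $1+p\Zp$ with $xw^{-1}\in G$, so $x\mapsto z=(w-1)/p$ is a definable function. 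Finally $\log_p(1+pX)=\sum_{n\geq 1}(-1)^{n+1}p^nX^n/n$ lies in $\Zp\{X\}$ — its coefficients have valuation $n-v_p(n)\to\infty$ — so it is one of the restricted analytic function symbols of $\mathcal{L}_{an}$, and therefore $x\mapsto LOG(x)=\log_p(1+p\,z(x))$ is $\mathcal{L}_{an}^D(p^\Z)$-definable.

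Since the field operations on $\Qp$ are of course also definable in $\mathcal{L}_{an}^D(p^\Z)$, the structure $(\Qp,+,-,\cdot,0,1,LOG)$ is a reduct of $(\Qp,\mathcal{L}_{an}^D(p^\Z))$ in the sense above, and the latter is NIP by the preceding corollary; hence $Th(\Qp,LOG)$ is NIP. I do not foresee a genuine obstacle here: the proof reduces to a routine definability check — the only points deserving a line of justification being the definability of $G$ (immediate from the finiteness of $\mu_{p-1}$), the uniqueness of the factorization (which is what makes $1+pz$ a first-order function of $x$), and the convergence of $\log_p(1+pX)$ on all of $\Zp$ (so that it is honestly a restricted power series) — combined with the general preservation of NIP under reducts.
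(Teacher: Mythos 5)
Your proposal is correct and follows essentially the same route as the paper: both arguments show that $LOG$ is $\mathcal{L}_{an}^D(p^{\Z})$-definable via the unique decomposition $x=a\xi(1+py)$ (with $a\in p^{\Z}$, $\xi\in\mu_{p-1}$, $y\in\Zp$) together with the fact that $\log_p(1+pX)\in\Zp\{X\}$, and then deduce NIP from the preceding corollary since NIP passes to reducts.
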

\begin{proof} This immediate from the above corollary as $(\Qp, LOG)$ is $\mathcal{L}_{an}^D(p^{\Z})$-definable: for all $x\in \Qp$, there is a unique $\xi\in\mu_{p-1}, a\in p^{\Z}$ and $y\in \Zp$ such that $x=a\xi(1+py)$. This decomposition is $\mathcal{L}_{an}^D(p^{\Z})$-definable. So, $LOG(x)=LOG(a)+LOG(\xi)+LOG(1+py)=\log_p(1+py)$ which is $\mathcal{L}_{an}$-definable.
\end{proof}

To conclude this section, we prove that the dense case is also NIP in the finitely generated case.
\begin{theorem} The theory of $(\Qp, G)$ is NIP for all $G$ finitely generated subgroup of $1+p\Zp$.
\end{theorem}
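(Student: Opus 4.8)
The plan is to apply Corollary~2.5 of \cite{C-S} (Theorem~\ref{C-S 2.5}) with $\mathcal{L}=\mathcal{L}_{Mac}$, $M=\Qp$ and $A=G$. First note that a finitely generated $G\leq 1+p\Zp$ has finite rank, hence has the Mann property and is small, and $[n]G$ is finite for every $n$; by Lemma~\ref{subgroup Qp} its discrete part is trivial and $s=1$, so $G=D$ is dense in some $1+p^{m}\Zp$, and $Th(\Qp,G)$ in $\mathcal{L}_{Mac}\cup\{A\}$ is the reduct of the dense-case theory $T_G$ of Section~\ref{Expansion of Qp by a multiplicative subgroup}. One must then verify: (i) $Th_{\mathcal{L}_{Mac}}(\Qp)$ is NIP; (ii) $T_G$ is bounded; (iii) the structure $A_{ind}$ induced on $G$ by the $\mathcal{L}_{Mac}$-definable subsets of $\Qp$ is NIP. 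Point (i) is well-known ($p$-adically closed fields are NIP). For (ii), Proposition~\ref{Definable set} with trivial discrete part shows that every definable set in a model of $T_G$ is a Boolean combination of sets $\exists\bar y\,(A(\bar y)\wedge\Phi(\bar x,\bar y))$ with $\Phi$ quantifier-free in $\mathcal{L}_{Mac}$ (here $A_V=A$ and $\lambda$ is trivial), and the remaining symbols $\equiv_n,\gamma_g$ of $\mathcal{L}_G$ are defined by bounded $\mathcal{L}_{Mac}\cup\{A\}$-formulas; hence $T_G$ is bounded.

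The work is in (iii), and follows the template of the analysis of $A_{ind}(p^\Z)$ in Section~\ref{NIP}. I would show that $A_{ind}$ is a reduct of the $p$-valued group $(G,V,\equiv_n)$ with every element of $G$ named by a constant; since $[n]G<\infty$ this group is a model of $T_{pV,N}$ for the appropriate $N$ (all $t_i$ finite), which is NIP by Theorem~\ref{p-valued group are NIP}, and adding constants preserves NIP, so this suffices. That every trace on $G$ of a $p$-valued-group formula is a trace of an $\mathcal{L}_{Mac}$-formula is clear, since $V$ and $\equiv_n$ are given by $\mathcal{L}_{Mac}$-conditions. For the converse one uses semialgebraic cell decomposition in $\Qp$ to reduce an arbitrary trace on $G^n$ to Boolean combinations of traces of basic cells --- given by a condition ``$v(x-c)$ lies in a fixed Presburger interval'' together with ``$x-c\in\mu P_m$'' --- and of algebraic sets $\{\bar x:P(\bar x)=0\}$. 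The Mann property (through Lemma~\ref{G-vdD Lemma 5.12} and the Mann axioms) reduces the trace of an algebraic set, or of a linear equation $\sum a_ix_i=1$, on $G^n$ to a finite set together with a finite union of sets defined by ``some coordinates equal $1$'' plus a multiplicative relation among the rest --- all $p$-valued-group definable. For a valuation cell with center $c$ one uses density of $G$ in $1+p^{m}\Zp$ to pick $g_0\in G$ approximating the relevant part of $c$, so that $v(x-c)$ becomes the $V$-value of a group term in $x$ and $g_0$ (just as $v(x-c)$ was handled in the $p^\Z$ case); the congruence $x-c\in\mu P_m$ is treated the same way.

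Combining (i)--(iii) with Theorem~\ref{C-S 2.5} gives that $Th(\Qp,G)$ is NIP. The main obstacle is (iii): because $G$ is dense rather than discrete, the ``finitely many exceptional points'' occurring in the $p^\Z$ analysis become $p$-valued-group definable exceptional sets, and one must check --- using density, the regular-density and equidistribution axioms of $T_{pV,N}$, and the Mann property --- that the traces of valuation inequalities, congruences and Mann equations really do collapse into the $p$-valued group language (over $G$-parameters). Given that, NIP of $A_{ind}$, and hence of $Th(\Qp,G)$, follows.
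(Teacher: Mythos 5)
Your frame (Theorem \ref{C-S 2.5}, boundedness via Proposition \ref{Definable set}, then NIP of the structure induced on $G$) is the paper's, and points (i) and (ii) are fine. The gap is in (iii). You propose to prove that the structure induced on $G$ by $\mathcal{L}_{Mac}$-definable sets (with arbitrary parameters from the field) is a \emph{reduct} of the $p$-valued group $(G,\cdot,V,\equiv_n)$ with constants, by imitating the Presburger collapse carried out for $p^\Z$. That collapse is not what the paper proves, your sketch does not establish it, and it is almost certainly false in the dense case. Concretely: after cell decomposition the centers are definable \emph{functions} of the other variables, not constants, so ``pick $g_0\in G$ approximating the relevant part of $c$'' gives nothing uniform; and conditions of relative precision defeat the group language altogether. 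For instance, take $c\in\Zp^\times$ transcendental over $\Q(G)$ and consider the trace on $G^2$ of the $\mathcal{L}_{Mac}$-set $\{(x,y):\ v\big((x-1)-c(y-1)\big)\geq 2\,v(y-1)\}$. Its fibre over $y$ is the trace of a ball of radius $2V(y)$ (up to a constant) centred at $1+c(y-1)$; by quantifier elimination for $T_{pV,N}$ (Theorem \ref{EQ valued groups}), fibres of a valued-group-definable set are Boolean combinations of balls centred at points of the form $y^{q}g^{q'}$ with bounded rational exponents and $g$ among finitely many constants, together with congruence classes of fixed radius. Matching the two for $V(y)\to\infty$ forces $c\in\Q$, a contradiction; so this trace is not valued-group definable over $G$-constants. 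Note also that even in the discrete case the paper only handled subsets of $A^1$ directly and needed $\mathcal{L}_{Pres}$-minimality plus Cluckers' theorem to reach $A^n$; you cite no analogue of that step here, and the ``finitely many exceptional points'' arguments you invoke rely on discreteness of $p^\Z$.

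The paper avoids the collapse entirely. By Proposition \ref{definable set in G}, a pair-definable subset of $A^n$ is of the form $Y\cap Z$ with $Y$ definable in the $p$-adically closed field and $Z$ definable in the $p$-valued group; no reduction of $Y$ to the group language is attempted or needed. NIP of the induced structure is then obtained directly: a sequence of tuples from $A$ that is indiscernible for the induced structure is in particular indiscernible as a sequence in the valued field and as a sequence in the valued group, so NIP of $p$-adically closed fields handles the conjunct $\Phi$ and Theorem \ref{p-valued group are NIP} handles the conjunct $\Psi$, giving finite alternation for $\Phi\wedge\Psi$ and hence NIP of $A_{ind}$, which together with boundedness and Theorem \ref{C-S 2.5} finishes the proof. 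If you replace your step (iii) by this argument (keeping the field-formula part as a field formula rather than trying to translate it into the group language), your proof goes through.
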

\begin{proof} By Theorem \ref{C-S 2.5} and Proposition \ref{Definable set}, it is sufficient to prove that the structure induced on the group $G$ is NIP. By Proposition \ref{definable set in G}, formulas in $A_{ind}$ are of the type $\Phi\wedge \Psi$ where $\Phi$ is a formula in the language of $p$-adically closed fields (with parameters) and $\Psi$ is a formula in the language of $p$-valued groups. As indiscernible sequences in $A_{ind}$ are indiscernible in $K$ (as valued fields) and in $A$ (as valued groups), as the theory of $p$-adically closed fields is NIP and by Theorem \ref{p-valued group are NIP}, $\Phi$ and $\Psi$ are NIP. Therefore, $Th(A_{ind})$ is NIP.
\end{proof}

\bibliographystyle{plain}
\bibliography{Biblio_p-adic}

\noindent Nathana\"el Mariaule\\
Universit\'e de Mons, Belgium\\
\emph{E-mail address: Nathanael.MARIAULE@umons.ac.be}

\end{document}